\documentclass[11pt,leqno]{amsart}

\usepackage{amssymb}
\usepackage{amsmath}
\usepackage{amsxtra}
\usepackage{amscd}
\usepackage{mathtools}
\usepackage{float}
\usepackage{graphicx} 
\usepackage{amsfonts}
\usepackage{pb-diagram}
\usepackage[utf8]{inputenc}
\usepackage[T1]{fontenc}
\usepackage{enumerate}
\usepackage{array,multirow}
\usepackage[all]{xy}
\usepackage{cases}
\usepackage{empheq}
\usepackage{amsbsy}
\usepackage{multirow}

\textheight 210mm
\textwidth 140mm

\setlength{\topmargin}{0cm}
\setlength{\textheight}{22cm}
\setlength{\textwidth}{16cm}
\setlength{\oddsidemargin}{-0.1cm}
\setlength{\evensidemargin}{-0.1cm}

\newtheorem{theorem}{Theorem}
\newtheorem{corollary}{Corollary}
\newtheorem{lemma}{Lemma}
\newtheorem{proposition}{Proposition}
\newtheorem{definition}{Definition}
\newtheorem{remark}{Remark}

\numberwithin{equation}{section}

\makeatletter
\let\@wraptoccontribs\wraptoccontribs

\makeatother

\begin{document} 

\title{Framization of the Temperley--Lieb Algebra}

\author{D. Goundaroulis}
\address{Departament of Mathematics,
National Technical University of Athens,
Zografou campus, GR--157 80 Athens, Greece.}
\email{dgound@mail.ntua.gr}
\urladdr{users.ntua.gr/dgound}

\author{J. Juyumaya}
\address{Instituto de Matem\'{a}ticas, Universidad de Valpara\'{i}so, 
Gran Breta\~{n}a 1091, Valpara\'{i}so, Chile}
\email{juyumaya@uvach.cl}

\author{A. Kontogeorgis}
\address{Department of Mathematics, University of Athens\\
Panepistimioupolis, 15784 Athens, Greece
}
\email{kontogar@math.uoa.gr}

\author{S. Lambropoulou}
\address{ Department of Mathematics,
National Technical University of Athens,
Zografou campus, GR--157 80 Athens, Greece.}
\email{sofia@math.ntua.gr}
\urladdr{http://www.math.ntua.gr/~sofia}

\thanks{This research has been co-financed by the European Union (European Social
 Fund -- ESF) and Greek national funds through the Operational Program
 "Education and Lifelong Learning" of the National Strategic Reference
 Framework (NSRF) -- Research Funding Program: THALES: Reinforcement of the
 interdisciplinary and/or inter-institutional research and innovation. Moreover, 
 the second author was partially supported by Fondecyt 1141254 and   DIUV Grant Nº1--2011. 
}

\keywords{Framization, Temperley--Lieb algebra, Yokonuma--Hecke algebra, Markov trace, link invariants}

\subjclass[2010]{57M25, 20C08, 20F36}

\date{}

\begin{abstract}
We propose a framization  of the Temperley--Lieb algebra. The framization is a procedure that can briefly be described as the adding of framing to a known knot algebra in a way that is both algebraically consistent and topologically meaningful. Our framization of the Temperley--Lieb algebra is defined 
as a quotient of the   Yokonuma--Hecke algebra. The main theorem provides necessary and sufficient conditions for the Markov trace defined on the Yokonuma--Hecke algebra to pass through to the quotient algebra. Using this we construct 1-variable invariants for classical knots and links, which, as we show, are not topologically equivalent to the Jones polynomial.
\end {abstract}
\maketitle

\setcounter{tocdepth}{1}
\tableofcontents
\section{Introduction}
Since the original construction of  the Jones polynomial the Temperley--Lieb algebra has become a  cornerstone of  a fruitful interaction between Knot theory and Representation theory. The  Temperley--Lieb algebra was introduced by Temperley and Lieb \cite{tl71} and was rediscovered by Jones \cite{jo83} as a knot algebra \cite{jo}.

\smallbreak

 A knot algebra is an algebra that is used in the construction of invariants of classical links using Jones' method \cite{jo}. More precisely, a knot algebra ${\rm A}$ is a triplet $({\rm A}, \pi , \tau ) $, where $\pi$ is an appropriate representation of the braid group in ${\rm A}$ and $\tau$ is a Markov trace function defined on ${\rm A}$. The Temperley--Lieb algebra, the Iwahori--Hecke algebra and the BMW algebra are the most known examples of knot algebras.

\smallbreak
 The \lq framization\rq\  of a knot algebra is a mechanism designed by the second and fourth authors, that consists in a generalization of  a knot algebra via the  addition of  framing generators. In this way we obtain a new algebra which is related to framed braids and framed knots. More precisely, the framization procedure can roughly be described as the procedure of adding framing generators to the generating set of  a knot algebra, of defining interacting relations between the framing generators and the original generators of the algebra and of applying framing on the  original  defining  relations of the algebra. The resulting framed relations should be  topologically consistent. The challenge in this procedure is to apply the framization  on the relations  of polynomial type. 

\smallbreak

The basic example of framization is the Yokonuma--Hecke algebra, ${\rm Y}_{d,n}(u)$, which can be regarded as a  framization of the Iwahori--Hecke algebra, ${\rm H}_n(u)$ \cite{jula2, jula}. The quadratic relation of ${\rm Y}_{d,n}(u)$ involves intrinsically the framing generators, while for $d=1$ the algebra ${\rm Y}_{1,n}(u)$ coincides with ${\rm H}_n(u)$. Having in mind this example, the second and fourth authors proposed framizations of several knot algebras \cite{jula5, jula6}. 
\smallbreak
The aim of this paper is to propose a framization of the Temperley--Lieb algebra and to derive from this new algebra knot and link invariants via an appropriate Markov trace. The Temperley--Lieb algebra can be regarded as a quotient of the Iwahori--Hecke algebra. Therefore, it is natural to search for a quotient of the Yokonuma--Hecke algebra over an appropriate two-sided ideal, that can be considered as a framization of the Temperley--Lieb algebra. Although such an ideal is not unique, it will become clear that our choice for the ideal that leads to the framization of the Temperley--Lieb algebra is the most natural one with respect to the construction of related framed and classical link invariants. Indeed, in Section~\ref{motivation} we first discuss two natural  quotients of ${\rm Y}_{d,n}(u)$ that could possibly lead to a framization of the Temperley--Lieb algebra, the Yokonuma--Temperley--Lieb algebra, ${\rm YTL}_{d,n}(u)$ (introduced and studied in \cite{gojukola}) and the Complex Reflection Temperley--Lieb algebra, ${\rm CTL}_{d,n}(u)$. These two quotient algebras, however, are not suitable for our purpose, since: The algebra ${\rm YTL}_{d,n}(u)$ is too restricted and, as a consequence,  the invariants for classical links from the algebra ${\rm YTL}_{d,n}(u)$ just recover the Jones polynomial \cite{gojukola}. On the other hand, as we shall see, the algebra ${\rm CTL}_{d,n}(u)$ is too large for our topological purposes. We proceed with introducing a third quotient of ${\rm Y}_{d,n}(u)$, the Framization of the Temperley--Lieb algebra, ${\rm FTL}_{d,n}(u)$, which lies between ${\rm YTL}_{d,n}(u)$ and ${\rm CTL}_{d,n}(u)$ and which will turn out to be the right one. The connection between all three quotients of ${\rm Y}_{d,n}(u)$ is then analyzed. We note that for $d=1$ all three quotients coincide with the Temperley--Lieb algebra ${\rm TL}_n(u)$. We further provide presentations with non-invertible generators for the quotient algebras ${\rm FTL}_{d,n}(u)$ and ${\rm CTL}_{d,n}(u)$.  Such a presentation for the quotient algebra ${\rm YTL}_{d,n}(u)$ was given in \cite{gojukola}. We conclude this section with a result by Chlouveraki and Pouchin  \cite{ChPou2} regarding the dimensions of the quotient algebras ${\rm FTL}_{d,n}(u)$ and ${\rm CTL}_{d,n}(u)$.
\smallbreak
Returning to our basic example, the Yokonuma--Hecke algebra, the second author has constructed a unique Markov trace function, ${\rm tr}$, on the algebra ${\rm Y}_{d,n}(u)$ with parameters $z, x_1 , \ldots , x_{d-1}$ \cite{ju}. Consequently, invariants for framed, classical and singular oriented links have been obtained \cite{jula,jula4,jula3} by applying the so-called \lq ${\rm E}$--condition\rq\ on the parameters $x_1, \ldots , x_{d-1}$ so that ${\rm tr}$ re-scales according to the negative stabilization move between framed braids \cite{jula}. These invariants, in particular those for classical links, was necessary to be compared with other known invariants, especially with the 2-variable Jones or Homflypt polynomial. In \cite{ChLa} it was proved that these polynomial invariants do not coincide with the Homflypt polynomial, except in trivial cases. Yet they could be topologically equivalent to the Homflypt polynomial, in the sense that they might distinguish the same pairs of non-isotopic links. Eventually, in a recent development \cite{ChJuKaLa}, another presentation for the Yokonuma--Hecke algebra is employed with parameter $q$ in a new quadratic relation, where $q^2=u$ \cite{ChPoulain}. Using this presentation, the authors of \cite{ChJuKaLa} have been able to establish that the classical link invariants, $\Theta_d$, obtained from the isomorphic algebra ${\rm Y}_{d,n}(q)$ coincide with the Homflypt polynomial on {\it knots}, but they are {\it not topologically equivalent to the Homflypt polynomial on links} (as it was conjectured in  \cite{CJJKL}). 
\smallbreak

The next natural question is to examine under what conditions the trace ${\rm tr}$ on the algebra ${\rm Y}_{d,n}(u)$ passes through to the quotient algebras ${\rm FTL}_{d,n}(u)$ and ${\rm CTL}_{d,n}(u)$ respectively. We recall that, in the classical case, as Jones showed, the Ocneanu trace on the Iwahori--Hecke algebra \cite{jo} passes to the quotient ${\rm TL}_n(u)$ if and only if the trace parameter $\zeta$ takes certain specific values.
Accordingly, in Section~\ref{ftlsection} we provide the necessary and sufficient conditions for the Markov trace ${\rm tr}$ \cite{ju} on the  Yokonuma--Hecke algebra to pass through to the quotient algebras  ${\rm FTL}_{d,n}(u)$ and ${\rm CTL}_{d,n}(u)$. The corresponding conditions for the algebra ${\rm YTL}_{d,n}(u)$ are given in \cite{gojukola}. More precisely, we first find the necessary and sufficient conditions on the trace parameters $z, \, x_1 \, \ldots , x_{d-1}$, for the algebra ${\rm FTL}_{d,3}(u)$ using tools from harmonic analysis on finite groups (Lemma~\ref{lemmaN3}) and then we generalize our result using induction on $n$ (Theorem~\ref{akthmgen}). Using the same methods we prove the analogous theorem for ${\rm CTL}_{d,n}(u)$ (Theorem~\ref{ctlthm}). For $d=1$ the specific values we found for $z$ coincide with those found by Jones for ${\rm TL}_n(u)$ \cite{jo}. Finally, we discuss the connections between the necessary and sufficient conditions for ${\rm tr}$ to pass to all three quotient algebras ${\rm CTL}_{d,n}(u)$, ${\rm FTL}_{d,n}(u)$ and ${\rm YTL}_{d,n}(u)$. 
\smallbreak
Using the above conditions on the trace ${\rm tr}$ and subjecting the trace parameters $x_1, \ldots , x_{d-1}$ to the ${\rm E}$--condition, we define in Section~\ref{knotinv} invariants for framed and classical links through the quotient algebras ${\rm FTL}_{d,n}(u)$ and ${\rm CTL}_{d,n}(u)$.  We then show that the invariants from the algebras ${\rm CTL}_{d,n}(u)$ coincide either with some of the invariants from ${\rm Y}_{d,n}(u)$ or with some of the invariants from ${\rm FTL}_{d,n}(u)$. Since ${\rm CTL}_{d,n}(u)$ is larger than ${\rm FTL}_{d,n}(u)$ and since we do not obtain from ${\rm CTL}_{d,n}(u)$ any extra invariants, for these reasons ${\rm FTL}_{d,n}(u)$ is chosen as the framization of the Temperley--Lieb algebra.
\smallbreak
Focusing now on the classical link invariants from the algebra ${\rm FTL}_{d,n}(u)$, these need to be compared to the Jones polynomial. Following \cite{ChJuKaLa}, in Section~\ref{identif} we give a new presentation for the algebra ${\rm FTL}_{d,n}$ with parameter $q$ deriving from the new presentation of the Yokonuma--Hecke algebra ${\rm Y}_{d,n}(q)$. We then adjust our results so far to the isomorphic algebra ${\rm FTL}_{d,n}(q)$ and we apply them to the results of \cite{ChJuKaLa}. Namely, by specializing $\Theta_d(q,z)$  to the our specific value for $z$, we obtain 1-variable invariants for classical knots and links, denoted by $\theta_d(q)$. Finally, adapting the results of \cite{ChJuKaLa} to the invariants $\theta_d(q)$ we show that they coincide with the Jones polynomial on {\it knots} but they are {\it not topologically equivalent to the Jones polynomial on links}.
\smallbreak
The outline of the paper is as follows:
Section~\ref{prelim} is dedicated to providing necessary definitions and results, including: the Iwahori--Hecke algebra, the Ocneanu trace  and the Yokonuma--Hecke algebra. In Section~\ref{sectioninv} we recall some basic tools from harmonic analysis of finite groups, such as the convolution product, the product by coordinates and the Fourier transform, necessary for exploring the \lq ${\rm E}$--system\rq . In Section~\ref{motivation} we discuss three quotients of the Yokonuma--Hecke algebra as possible candidates for the framization of the Temperley--Lieb algebra. In Section~\ref{ftlsection} we provide necessary and sufficient conditions for the ${\rm tr}$ on the Yokonuma--Hecke algebra to pass through to the quotient algebras ${\rm FTL}_{d,n}(u)$ and ${\rm CTL}_{d,n}(u)$. In Section~\ref{knotinv} we define 1-variable framed and classical link invariants related to the algebras ${\rm FTL}_{d,n}(u)$ and ${\rm CTL}_{d,n}(u)$. Finally, in Section~\ref{identif} we prove that 1-variable classical link invariants derived from the isomorphic algebra ${\rm FTL}_{d,n}(q)$ are not topologically equivalent to the Jones polynomial.
\smallbreak
The results of this paper lead to further questions worth investigating, as for example, the possibility of obtaining new 3-manifold invariants related to the invariants $\theta_d$, in analogy to the Witten invariants \cite{wi}.


\section{Preliminaries}\label{prelim}

\subsection{{\it Notation}}
 Throughout the paper by the term algebra we mean an associative  unital (with unity $1$)  algebra over $ \mathbb{C}(u)$, where $u$ is an indeterminate. Thus we can regard   $ \mathbb{C}(u) $ as a subalgebra  
 of the center of the algebra. We will also fix two positive integers, $d$ and $n$. 
\smallbreak 
As usual we denote by ${\mathbb Z}/d{\mathbb Z}$  the group of integers modulo $d$. We will also denote the underlying set of the group $\mathbb{Z}/d\mathbb{Z}$ by $\{0, 1,\ldots, d-1\}$.
\smallbreak
We denote  $S_n$ the symmetric group on the set $\{1, 2, \ldots , n \}$. Let $s_i$ be the elementary transposition $(i, i+1)$ and let
$\langle s_i, s_j\rangle$ denote the subgroup generated by $s_i$ and $s_j$.  We also denote by $l^\prime$ the  length function on $S_n$ with respect to the $s_i$'s.
\smallbreak
Denote by $C$ the infinite cyclic group and by $C_d= \langle t \, | \, t^d=1 \rangle$  the cyclic group of order $d$. Let $t_i := (1,\ldots, 1,t ,1, \ldots , 1 )\in C_d^n$,
where $t$ is in the  $i$-th position. We then have:
\[
C_d^n = \langle t_1, \ldots  ,t_n \, | \, t_i t_j = t_j t_i, \, t_i^d=1 \rangle .
\]

Define $C_{d,n}: = C_d^n \rtimes S_n $, where the action is defined by permutation on the indices of the $t_i$'s, namely: $s_it_j = t_{s_i(j)} s_i$.  Notice that $C_{d,n}$ is isomorphic to the {\it complex reflection group} $G(d,1,n)$. We also introduce the following notation $C_{\infty,n}:= C^n \rtimes S_n$.
\smallbreak
Denote by $B_n$ the braid group of type $A$, that is, the group generated by the elementary braidings $\sigma_1, \ldots , \sigma_{n-1}$, subject to the following relations: $\sigma_i \sigma_j \sigma_i = \sigma_j \sigma_i \sigma_j$, for $|i-j|=1$ and $\sigma_i \sigma_j = \sigma_j \sigma_i$, for $|i-j|>1$. We will also use the {\it $d$-modular framed braid group} ${\mathcal F}_{d,n}:= C_d^n \rtimes B_n$, where the action of $B_n$ on $C_d^n$ is defined by the induced permutation on the indices of the $t_i$'s. We will also refer to the {\it framed braid group} $\mathcal{F}_{n}:= C^n \rtimes B_n$. Of course, we have isomorphisms: $
\mathcal{F}_{n}\cong {\mathbb Z}^n \rtimes B_n$  and ${\mathcal F}_{d,n}\cong \left( {\mathbb Z}/d{\mathbb Z}\right)^n \rtimes B_n$. Finally, note that the natural projections $C \rightarrow C_d$ and $ B_n \rightarrow S_n$ induce the following commutative diagram:
\[
\xymatrix{ 
&\mathcal{F}_{n} \ar[d] \ar[r] & \mathcal{F}_{d,n}\ar[d] \ar[r] & B_n \ar[d] \ar[r] & 1 \\
&C_{\infty,n} \ar[d] \ar[r] & C_{d,n}\ar[d] \ar[r] & S_n \ar[d] \ar[r] & 1 \\
& 1 &1 &1}
\]

From the above diagram one can define {\it the length function} $l$ on $C_{d,n}$ as the lift of the ordinary length function $l'$ of $S_n$, that is:
\begin{equation}\label{lenfun}
l(t^a s_{i_1}\ldots s_{i_k}) := l'(s_{i_1}\ldots s_{i_k}),
\end{equation}
where $t^a : = t_1^{a_1} \ldots t_n^{a_n}\in C_d^n$.

\begin{remark} \rm
We would like to point out that $C_{d,n}$ and ${\mathcal F}_{d,n}$
appear in the theory of ``fields with one element''. This is a theory 
dreamt by J. Tits in his study of algebraic groups. According to  the 
seminal article of Kapranov and Smirnov \cite{Ka-Sm},  
$\mathrm{GL}_n(\mathbb{F}_1)=S_n$, $\mathrm{GL}_n(\mathbb{F}_1[t])=B_n$, 
$\mathrm{GL}_n(\mathbb{F}_{1^n})=C_{d,n}$ and $\mathrm{GL}_n(\mathbb{F}_{1^n}[t])={\mathcal F}_{d,n}$, where $\mathrm{GL}_n(\mathbb{F}_{1^n})$ 
(resp. $\mathrm{GL}_n(\mathbb{F}_{1^n}[t])$) is in ``some sense'' the 
limit case $q\rightarrow 1$ of $\mathrm{GL}_n(\mathbb{F}_q)$ 
(resp. $\mathrm{GL}_n(\mathbb{F}_q[t])$).
\end{remark}

\subsection{{\it Background material}} We denote by 
 ${\rm H}_n(u)$ the {\it Iwahori--Hecke algebra} associated to $S_n$, that is,  the  $ \mathbb{C}(u)$-algebra with linear basis $\{ h_w \, | \, w\in S_n \}$ and the following rules of multiplication:
 \begin{equation}\label {rulemultH}
h_{s_i}h_w =\left\{\begin{array}{ll}
h_{s_iw} & \text{for } l(s_iw)>l(w) \\
 u h_{s_iw} + (u-1)h_w & \text{for } l(s_iw)<l(w) 
\end{array}\right.  .
\end{equation}

Set $h_i := h_{s_i} $. Then  
  ${\rm H}_n(u)$ is presented by  
  $h_1, \ldots , h_{n-1}$ subject to the following relations:
 \begin{align}
h_i h_j &= h_j h_i \quad \text{for all} \quad |i-j| >1 \label{He1}\\
h_i h_j h_i &= h_j h_i h_j \quad \text{for all} \quad \vert i - j \vert =1 \label{He2}\\
h_i^2 &=  u + (u-1) h_i \label{He3}.
\end{align}

\begin{definition}\label{tlalgebra}\rm
The {\it Temperley--Lieb algebra} ${\rm TL}_n(u)$ can be defined  as the quotient of the algebra ${\rm H}_n(u)$ over the two-sided ideal generated by the {\it Steinberg  elements} $h_{i,j}$:
\begin{equation}\label{idealrel}
h_{i,j}: = \sum_{w \in \langle s_i , s_j \rangle} h_w,\quad \text{for all} \quad \vert i - j \vert =1 .
\end{equation}
\end{definition}
Consequently, the algebra ${\rm TL}_n(u)$ can be thus presented by  $ h_1, \ldots , h_{n-1}$ subject to relations \eqref{He1}--\eqref{He3} and the following relations:
\[
1+ h_i + h_j +h_i h_j + h_j h_i + h_i h_j h_i = 0 \quad \text{for all}\quad |i-j|=1 .
\]
The defining ideal of the algebra ${\rm TL}_n(u)$ is principal and it is generated by the element $h_{1,2}$. Furthermore, using the transformation:
\begin{equation}\label{transfor}
f_i := \frac{1}{u+1}(h_i + 1) ,
\end{equation}
the algebra ${\rm TL}_n(u)$ can be presented by the non-invertible generators $ f_1, \ldots , f_{n-1} $ subject to the following relations:
\[
\begin{array}{ccl}
f_{i}^{2} & = &  f_i \\
f_if_{j}f_i &  = &  \delta f_i,\quad \text{for all} \quad |i-j| =1\\
f_if_j &  = & f_j f_i, \quad \text{for all} \quad |i-j| >1 ,
\end{array}
\]
where $ \delta^{-1} = 2+ u +u^{-1}$ \cite{jo}.
\smallbreak

In \cite{Homfly, jo} Ocneanu constructed a unique Markov trace on ${\rm H}_n (u)$. More precisely, we have the following theorem.
\begin{theorem}[Ocneanu]\label{ocn} Let  $\zeta$ be an indeterminate. There exists a linear trace $\tau$ on $\cup_{n=1}^{\infty} {\rm H}_n(u)$ uniquely defined by the inductive rules:
\begin{enumerate}
\item $\tau (a b) = \tau (ba), \quad a,b \in {\rm H}_n(u)$
\item $\tau (1) = 1$
\item $\tau (a h_n ) = \zeta \,  \tau (a), \quad a\in {\rm H}_n(u)$ \qquad {(Markov property)}.
\end{enumerate}
\end{theorem}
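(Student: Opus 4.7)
\smallbreak
\noindent
\textbf{Proof proposal.} The plan is to proceed by induction on $n$, treating uniqueness and existence separately. The backbone is the following structural fact about the Iwahori--Hecke tower: every element of ${\rm H}_{n+1}(u)$ can be written as a linear combination of products $a\, h_n\, b$ with $a,b\in {\rm H}_n(u)$, together with elements lying already in ${\rm H}_n(u)$. Concretely, using the coset decomposition $S_{n+1}=\bigsqcup_{k=0}^{n} s_n s_{n-1}\cdots s_{n-k+1}\, S_n$ (with $k=0$ contributing $S_n$ itself), the basis $\{h_w\}$ yields a direct sum decomposition of ${\rm H}_{n+1}(u)$ as a free right ${\rm H}_n(u)$-module with basis $\{1,\,h_n,\,h_n h_{n-1},\,\ldots,\,h_n h_{n-1}\cdots h_1\}$. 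This decomposition is the engine of both halves of the argument.

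\smallbreak
For uniqueness, assume $\tau$ satisfies (1)--(3). By (1) and (3), for $a,b\in {\rm H}_n(u)$ one has
\[
\tau(a\, h_n\, b)=\tau(b\, a\, h_n)=\zeta\,\tau(ba),
\]
so the value of $\tau$ on any element of ${\rm H}_{n+1}(u)\setminus {\rm H}_n(u)$ is forced by its restriction to ${\rm H}_n(u)$. Inductively, starting from $\tau(1)=1$ on ${\rm H}_1(u)=\mathbb{C}$, this pins down $\tau$ on the whole tower.

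\smallbreak
For existence, I would build $\tau$ inductively. Suppose $\tau$ is constructed on ${\rm H}_n(u)$ obeying (1)--(3). Using the right-module decomposition above, define
\[
\tau\bigl(h_n h_{n-1}\cdots h_{n-k+1}\, a\bigr)=\zeta\,\tau\bigl(h_{n-1}\cdots h_{n-k+1}\, a\bigr),\qquad a\in {\rm H}_n(u),\ k\ge 1,
\]
and keep the old values on ${\rm H}_n(u)$ (the case $k=0$). This is visibly linear, satisfies (2) and (3), and agrees with the previous $\tau$ on ${\rm H}_n(u)$. What remains is to verify the trace property $\tau(xy)=\tau(yx)$ on ${\rm H}_{n+1}(u)$, which is the substantive content of the theorem.

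\smallbreak
The main obstacle is precisely this last verification. By linearity and the inductive hypothesis it suffices to check $\tau(h_i x)=\tau(x h_i)$ for every generator $h_i$ and every basis element $x$ of ${\rm H}_{n+1}(u)$. The easy cases are $i<n-1$, handled by commuting $h_i$ past the $h_n h_{n-1}\cdots$ prefix and applying the inductive hypothesis. The delicate cases are $i=n-1$ and $i=n$, where one must reduce products such as $h_n h_{n-1}\cdots h_j\, h_{n-1}$ or $h_n h_{n-1}\cdots h_j\, h_n$ by repeated application of the braid relation $h_n h_{n-1} h_n=h_{n-1} h_n h_{n-1}$ together with the quadratic relation $h_n^2=u+(u-1)h_n$. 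After each reduction the surviving monomial either lies in ${\rm H}_n(u)$ or is again of the normal form $h_n h_{n-1}\cdots h_{n-k+1}\, a$, so one can invoke the inductive hypothesis and the defining formula; the resulting equalities $\tau(h_i x)=\tau(x h_i)$ then follow from tracking the coefficients produced by the quadratic relation and comparing both sides. This combinatorial bookkeeping--carried out once for the braid case and once for the quadratic case--is the technical heart of the proof.
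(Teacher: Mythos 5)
The paper does not prove this theorem: it is quoted as Ocneanu's result and referred to \cite{Homfly, jo}, so there is no in-paper proof to compare against. Your proposal follows the standard route of Jones' original argument: induction up the tower using the fact that ${\rm H}_{n+1}(u)$ is a free right ${\rm H}_n(u)$-module on $\{1, h_n, h_nh_{n-1},\ldots\}$ (equivalently ${\rm H}_{n+1}(u)={\rm H}_n(u)\oplus {\rm H}_n(u)h_n{\rm H}_n(u)$), with uniqueness forced by $\tau(ah_nb)=\zeta\,\tau(ba)$ and existence established by defining $\tau$ on the new basis and then checking the trace property. The uniqueness half is complete and correct.

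Two caveats on the existence half. First, the substantive content of the theorem is exactly the verification $\tau(xy)=\tau(yx)$ on ${\rm H}_{n+1}(u)$, which you correctly isolate and reduce to checking $\tau(h_ix)=\tau(xh_i)$ on generators, but then leave as ``combinatorial bookkeeping.'' That bookkeeping (the cases $i=n-1$ and $i=n$, where the braid and quadratic relations interact with the normal form) is where all the work lies, so as written the proposal is an accurate roadmap rather than a proof. Second, your claim that the inductive definition ``visibly'' satisfies (3) is premature: from the left-handed normal form $h_nh_{n-1}\cdots h_{n-k+1}a$ what is visible is $\tau(h_na)=\zeta\,\tau(a)$, whereas (3) as stated asserts $\tau(ah_n)=\zeta\,\tau(a)$; rewriting $ah_n$ in the chosen basis is nontrivial, so this form of the Markov property only follows once the trace property has been established (or one must verify it separately). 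Neither point is a wrong turn -- the architecture is the correct and standard one -- but both must be discharged for the argument to be complete.
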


The Ocneanu trace $\tau$ passes through to ${\rm TL}_n(u)$ for specific values of $\zeta$. Indeed, as it turned out \cite{jo}, to factorize $\tau$ to the Temperley--Lieb algebra, we only need the fact that $\tau$ annihilates the expression of Eq. \ref{idealrel}. So, in \cite{jo} it is proved that  
$\tau$ passes to the Temperley--Lieb  algebra if and only if:
\begin{equation}\label{jonval}
\zeta = - \frac{1}{u+1} \quad \mbox{or} \quad \zeta=-1.
\end{equation}

\subsection{{\it The Yokonuma--Hecke algebra}}\label{yhsec}

The Yokonuma--Hecke algebra of type $A$,  denoted by ${\rm Y}_{d,n}(u)$ \cite{yo}, can be defined by generators and relations \cite{ju} and can be regarded as a quotient of  $ \mathbb{C}(u){\mathcal F}_{d,n}$ over the two-sided ideal that is generated by the elements:
\[ \sigma_i^2 - (u-1)e_i - (u-1)e_i \sigma_i -1 , \]
where   $e_{i}$ is the idempotent defined by:
\begin{equation}\label{edi}
e_{i} := \frac{1}{d} \sum_{s=0}^{d-1}t_i^s t_{i+1}^{d-s},
\qquad i=1,\ldots , n-1.
\end{equation}
Equivalently, one can define ${\rm Y}_{d,n}(u)$ as follows:

\begin{definition}\label{yhdefpres} \rm
The {\it Yokonuma--Hecke algebra} ${\rm Y}_{d,n}(u)$ is the algebra  presented by generators $g_1, \ldots, g_{n-1}, t_1 , \ldots , t_n$  subject to the following relations:
\begin{align}
g_ig_j &= g_jg_i \quad \text{\rm for all}\quad  \vert i-j \vert > 1 \label{YH1}\\
 g_{i+1}g_ig_{i+1}&= g_ig_{i+1}g_i \label{YH2}\\
t_it_j &= t_jt_i  \quad \text{\rm for all} \quad  i,j \label{YH4}\\
t_i^{d}  &= 1\quad  \text{\rm for all }\quad  i \label{YH3}\\
g_i t_i &=  t_{i+1}g_i \label{YH5}\\
g_it_{i+1} &= t_i g_i \label{YH6}\\
g_it_j &= t_j g_i \quad \text{\rm for } j \neq i, \, i+1 \label{YH7}\\
g_i^2 &= 1 + (u-1)e_i+ (u-1)e_ig_i \label{quadratic} .
\end{align}

\end{definition}
\noindent
Note that for $d=1$ the quadratic relation \eqref{quadratic} becomes:
\[
g_i^2 = (u-1)g_i + u .
\]
So, the Yokonuma--Hecke  ${\rm Y}_{1,n}(u)$ coincides with the Iwahori--Hecke algebra.\\

The algebra ${\rm Y}_{d,n}(u)$ can also be regarded as a $u$-deformation of the group algebra $\mathbb{C}C_{d,n}$. Indeed, if $w \in S_n$ is a reduced word in $S_n$ with $w = s_{i_1}\ldots s_{i_k}$ then the expression $g_w = g_{s_{i_1}} \ldots g_{s_{i_r}} \in {\rm Y}_{d,n}(u)$ is well-defined since the generators $g_i := g_{s_i}$ satisfy the same braiding relations as the generators of $S_n$ \cite{ma}. We have the 
following multiplication rule in ${\rm Y}_{d,n}(u)$ (see \cite[Proposition 2.14]{jusur}):
\begin{equation}\label{rule}
g_{s_i}g_w =\left\{\begin{array}{ll}
g_{s_iw} & \text{for } l(s_iw)>l(w) \\
 g_{s_iw}+(u-1)e_ig_{s_iw} + (u-1)e_ig_w & \text{for } l(s_iw)<l(w) .
\end{array}\right.
\end{equation}
Note also that the generators $g_{t_i}$ correspond to $t_i$ and so, using Eq.~\ref{lenfun}, we have that: $g_{t_iw} = g_{t_i}g_w =  t_ig_w$.\\

The definition of the idempotents $e_i$ can be generalized in the following way. For any indices $i,j$  we define the following elements in ${\rm Y}_{d,n}(u)$:
\begin{equation}\label{eij}
e_{i,j} := \frac{1}{d} \sum_{s=0}^{d-1}t_i^s t_{j}^{d-s}.
\end{equation}
We also define, for any $0\leq m \leq d-1$, {\it the shift of $e_i$ by $m$}:
\begin{equation}\label{eijm}
e_i^{(m)} := \frac{1}{d} \sum_{s=0}^{d-1} t_i^{m+s} t_{i+1}^{d-s}.
\end{equation}
Notice that $e_i = e_{i, i+1} = e_i^{(0)}$. Notice also that $e_i^{(m)} = t_i^m e_i = t_{i+1}^m e_i$. Then  one deduces easily that:
\begin{align}\label{eimei+1=}
\begin{array}{lcr}
e_i^{(m)}e_{i+1} & = &  e_ie_{i+1}^{(m)}\\
t_1^at_2^bt_3^c e_1 e_2 & = & e_1^{(a+b+c)} e_2 ,
\end{array}
\end{align}
for all $0\leq m, a, b, c\leq d-1$.

The following lemma collects some of the relations among the $e_i$'s, the $t_j$'s and the $g_i$'s. These relations  will be used in the paper.

\begin{lemma}[{\cite[Lemma~1]{gojukola}}]\label{eipropop}
For the idempotents $e_i$  and for $1 \leq i , \,  j \leq n-1$ the following relations hold:
\[
\begin{array}{rcl}
t_j e_i &=& e_i t_j\\
e_{i+1}g_i &=& g_i e_{i,i+2}\\
e_ig_j &=& g_j e_i, \quad \mbox{for } j \neq i-1, i+1\\
e_jg_ig_j &=& g_ig_je_i \quad \mbox{for } |i-j|=1\\
e_ie_{i+1}&=& e_i e_{i, i+2}\\
e_{i}e_{i+1} &=& e_{i,i+2}e_{i+1}.
\end{array}
\]
\end{lemma}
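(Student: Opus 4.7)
The plan is to verify each of the six identities by direct computation, expanding the idempotents $e_i$, $e_{i,j}$ and $e_i^{(m)}$ according to definitions \eqref{edi}, \eqref{eij}, \eqref{eijm}, and applying only the commutation-type relations \eqref{YH4}--\eqref{YH7} together with $t_k^d=1$. In particular, no use of the quadratic relation or the braid relation on the $g_i$'s is needed.

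Identity (1) is immediate: $e_i$ is a polynomial in $t_i, t_{i+1}$, and all $t_k$ commute by \eqref{YH4}. For (3), if $j \notin \{i-1,i,i+1\}$ then $g_j$ commutes with both $t_i$ and $t_{i+1}$ via \eqref{YH7} and hence with $e_i$; if $j=i$, the short calculation
$$g_i e_i = \tfrac{1}{d}\sum_{s=0}^{d-1} g_i t_i^s t_{i+1}^{d-s} = \tfrac{1}{d}\sum_{s=0}^{d-1} t_{i+1}^s t_i^{d-s} g_i = e_i g_i$$
uses \eqref{YH5}, \eqref{YH6} together with a relabelling $s \mapsto d-s$ of the summation index.

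For (2), the key step is to push $g_i$ past $t_{i+1}^s t_{i+2}^{d-s}$: relation \eqref{YH6} converts $t_{i+1}$ into $t_i$, while \eqref{YH7} lets $g_i$ slide past $t_{i+2}$, yielding $e_{i+1}g_i = g_i e_{i,i+2}$. Identity (4) with $j=i+1$ then follows by iterating this idea: apply (2) to get $e_{i+1} g_i g_{i+1} = g_i e_{i,i+2} g_{i+1}$, then push $g_{i+1}$ through $e_{i,i+2}$ using \eqref{YH5}, \eqref{YH6}, \eqref{YH7} to land at $g_i g_{i+1} e_i$; the case $j=i-1$ is symmetric.

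For (5) and (6), I expand each side as a double sum over $\mathbb{Z}/d\mathbb{Z} \times \mathbb{Z}/d\mathbb{Z}$ and check they coincide after a single re-indexing, using $t_k^d=1$ to identify exponents modulo $d$. For instance, both $e_i e_{i+1}$ and $e_i e_{i,i+2}$ reduce to a sum of the form $\tfrac{1}{d^2}\sum_{s,r} t_i^{s} t_{i+1}^{d-s+r} t_{i+2}^{d-r}$ after the appropriate substitution. The main bookkeeping obstacle lies in (4), where one must carefully track how the subscripts on the $t_k$'s shift under two successive applications of \eqref{YH5}--\eqref{YH7}; however, since each step is the application of a single defining relation, no deeper structural argument is involved.
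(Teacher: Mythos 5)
Your verification is correct: each identity indeed follows from relations \eqref{YH4}--\eqref{YH7} and $t_k^d=1$ alone, and your computations for (2)--(6) (pushing $g_i$ through the framing generators and re-indexing the sums modulo $d$) all check out. The paper itself does not prove this lemma but imports it from \cite{gojukola} (Lemma 1), and your direct expansion of the idempotents is exactly the routine argument expected there, so there is nothing to flag.
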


A word in the defining generators of the  algebra will be called a {\it monomial}.  Notice that using relations (\ref{YH5}) and (\ref{YH6}) one can write any monomial $\mathfrak{m}$ in $  {\rm Y}_{d,n}(u)$ in the following form:
\[
\mathfrak{m}= t_1^{a_1}\ldots t_n^{a_n} \mathfrak{m}^\prime ,
\] 
where $\mathfrak{m}^\prime=g_{i_1}\ldots g_{i_n}$. We then say that every monomial in ${\rm Y}_{d,n}(u)$ has the {\it splitting property}, which is in fact inherited from the framed braid group $\mathcal{F}_n$. That is, one can separate the {\it framing part} of $\mathfrak{m}$ (which is the subword in the framing generators $t_j$) from the {\it braiding part} (which is the subword in the braiding generators $g_i$).
\subsection{{\it A Markov trace on ${\rm Y}_{d,n}(u)$}}

 Using the multiplication formulas \eqref{rule}, the second author proved in \cite{ju} that ${\rm Y}_{d,n}(u)$ has the following  standard linear basis:
\begin{equation}\label{standbas}
\{t_1^{a_1}\ldots t_n^{a_n} g_w \, | \,a_i \in C_d,\,  w \in S_n \}.
\end{equation}

\smallbreak

This above linear basis led naturally to the following inductive basis for the Yokonuma--Hecke algebra, which we will use in the proof of the main theorem (Theorem~\ref{akthmgen}). 
\begin{proposition}[{\cite[Proposition 8]{ju}}]\label{inductiveYH}
Every element in ${\rm Y}_{d,n+1}(u)$ is a unique linear combination of words, each of one of the following types:
\[
\mathfrak{m}_{n}g_n g_{n-1}\ldots g_i t_i^k \quad \mbox{or} \quad  \mathfrak{m}_{n}t_{n+1}^k ,
\]
where $0\leq k \leq d-1$ and $\mathfrak{m}_n$ is  a word in the inductive basis of  ${\rm Y}_{d,n}(u)$.
\end{proposition}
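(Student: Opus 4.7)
The plan is to take an arbitrary element of the standard basis (\ref{standbas}) for ${\rm Y}_{d,n+1}(u)$ and rewrite it in the claimed form, then conclude by a cardinality comparison. Let $X = t_1^{a_1}\cdots t_n^{a_n} t_{n+1}^{a_{n+1}} g_w$ with $w\in S_{n+1}$. First I would invoke the well-known coset decomposition of $S_{n+1}$ over $S_n$: writing $\tau_{n+1}=1$ and $\tau_i = s_n s_{n-1}\cdots s_i$ for $1\leq i\leq n$, every $w\in S_{n+1}$ factors uniquely as $w=w'\tau_i$ with $w'\in S_n$ and $l(w)=l(w')+l(\tau_i)$. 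Because the $g_j$'s satisfy the braid relations, the reduced factorization gives $g_w = g_{w'} g_{\tau_i}$ with no correction terms from (\ref{rule}).

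Next I would split into two cases according to the value of $i$. If $i=n+1$, then $g_w=g_{w'}$ involves only $g_1,\dots,g_{n-1}$, and by (\ref{YH4}) and (\ref{YH7}) the generator $t_{n+1}$ commutes with every $t_j$ and with $g_1,\dots,g_{n-1}$. Hence $X = (t_1^{a_1}\cdots t_n^{a_n} g_{w'})\, t_{n+1}^{a_{n+1}}$, which is of the second prescribed form, with the parenthesized element lying in the standard (hence, by the inductive hypothesis, the inductive) basis of ${\rm Y}_{d,n}(u)$. If $i\leq n$, I would first commute $t_{n+1}^{a_{n+1}}$ past $g_{w'}$ (again by (\ref{YH7})), and then slide it through the descending chain $g_n g_{n-1}\cdots g_i$ by iterated use of (\ref{YH5}), which gives $t_{j+1}g_j = g_j t_j$ and so the telescoping identity
\[
t_{n+1}^{a_{n+1}}\, g_n g_{n-1}\cdots g_i \;=\; g_n g_{n-1}\cdots g_i\, t_i^{a_{n+1}}.
\]
The outcome is $X = \mathfrak{m}_n\, g_n g_{n-1}\cdots g_i\, t_i^{a_{n+1}}$ with $\mathfrak{m}_n = t_1^{a_1}\cdots t_n^{a_n} g_{w'}$, which again lies inductively in the inductive basis of ${\rm Y}_{d,n}(u)$. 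This proves that the proposed set spans ${\rm Y}_{d,n+1}(u)$.

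Finally, for uniqueness I would count: the inductive basis of ${\rm Y}_{d,n}(u)$ has $d^n n!$ elements (by induction and agreement with (\ref{standbas})), and the spanning set I produced contains $d^n n!\cdot(n\cdot d + d) = d^{n+1}(n+1)!$ words, matching $\dim {\rm Y}_{d,n+1}(u)$. Spanning combined with equal cardinality forces linear independence, whence the expression of each element is unique. The only delicate point is the telescoping slide through $g_n g_{n-1}\cdots g_i$; everything else is a routine application of the presentation and the minimal-length coset representative machinery.
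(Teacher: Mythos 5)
Your argument is correct. Note that the paper offers no proof of this proposition at all---it is imported verbatim as \cite{ju}, Proposition~8---so there is nothing internal to compare against; but your route (standard basis \eqref{standbas} of ${\rm Y}_{d,n+1}(u)$, minimal-length coset representatives $\tau_i=s_ns_{n-1}\cdots s_i$ of $S_n$ in $S_{n+1}$ with $g_w=g_{w'}g_{\tau_i}$ by length additivity and Matsumoto, the telescoping slide $t_{n+1}^{a}g_ng_{n-1}\cdots g_i=g_ng_{n-1}\cdots g_i\,t_i^{a}$ from relation (\ref{YH5}), and a cardinality count against $\dim{\rm Y}_{d,n+1}(u)=d^{n+1}(n+1)!$) is the standard one and in substance the proof given in \cite{ju}. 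One point deserves tightening: a standard-basis monomial $t_1^{a_1}\cdots t_n^{a_n}g_{w'}$ of ${\rm Y}_{d,n}(u)$ is not \emph{by definition} a word of the inductive basis, so your parenthetical ``standard, hence inductive'' should strictly read ``standard, hence a linear combination of inductive-basis words''; since both prescribed shapes are stable under replacing $\mathfrak{m}_n$ by a linear combination (right multiplication by $g_n\cdots g_it_i^k$ or $t_{n+1}^k$ distributes), the spanning conclusion is unaffected. (In fact, pushing the interior powers of the $t_j$'s through the $g_j$'s via (\ref{YH5}) shows the two bases coincide as subsets of the algebra, so your phrasing is ultimately harmless.) The final step is sound: a family of at most $d^{n}n!\cdot d(n+1)=d^{n+1}(n+1)!$ words that spans a space of exactly that dimension must consist of distinct, linearly independent elements, which gives both existence and uniqueness of the expansion.
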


Employing the above inductive basis, the second author proved that ${\rm Y}_{d, n}(u)$ supports a unique Markov trace. We have the following theorem:
\begin{theorem}[{\cite[Theorem 12]{ju}}]\label{Juyutrace}
For indeterminates $z$, $x_1$, $\ldots, x_{d-1}$ there exists a unique linear Markov trace ${\rm tr}$:
\[
 {\rm tr}:  \cup_{n=1}^{\infty}{\rm Y}_{d,n}(u) \longrightarrow    \mathbb{C}(u)[z, x_1, \ldots, x_{d-1}],
\]
 defined inductively on $n$ by the following rules:
\[
\begin{array}{rcll}
{\rm tr}(ab) & = & {\rm tr}(ba)  \qquad &  \\
{\rm tr}(1) & = & 1 & \\
{\rm tr}(ag_n) & = & z\, {\rm tr}(a) \qquad &  (\text{Markov  property} )\\
{\rm tr}(at_{n+1}^s) & = & x_s {\rm tr}(a)\qquad  & (  s = 1, \ldots , d-1) ,
\end{array}
\]
where $a,b \in {\rm Y}_{d,n}(u)$.
\end{theorem}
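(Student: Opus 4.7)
Any linear functional on $\cup_n \mathrm{Y}_{d,n}(u)$ satisfying the four rules is completely determined on the inductive basis of Proposition~\ref{inductiveYH}: a basis word $\mathfrak{m}_n t_{n+1}^k$ is reduced by the framing rule to $x_k\,\mathrm{tr}(\mathfrak{m}_n)$, while $\mathfrak{m}_n g_n g_{n-1}\cdots g_i t_i^k$ is reduced by the Markov rule (after cycling $g_n$ to the far right) to $z\cdot\mathrm{tr}(\mathfrak{m}_n g_{n-1}\cdots g_i t_i^k)$, which already lies in $\mathrm{Y}_{d,n}(u)$. Iterating, every basis element is sent to a monomial in $z, x_1, \ldots, x_{d-1}$, so at most one such $\mathrm{tr}$ can exist.

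\textbf{Existence: construction.} Define $\mathrm{tr}$ recursively on $n$. On $\mathrm{Y}_{d,1}(u)$ set $\mathrm{tr}(1)=1$ and $\mathrm{tr}(t_1^s)=x_s$ for $1\leq s\leq d-1$. Assuming $\mathrm{tr}$ is defined on $\mathrm{Y}_{d,n}(u)$, extend to $\mathrm{Y}_{d,n+1}(u)$ by declaring, on the inductive basis of Proposition~\ref{inductiveYH},
\[
\mathrm{tr}\bigl(\mathfrak{m}_n t_{n+1}^k\bigr) := x_k\,\mathrm{tr}(\mathfrak{m}_n), \qquad
\mathrm{tr}\bigl(\mathfrak{m}_n g_n g_{n-1}\cdots g_i t_i^k\bigr) := z\,\mathrm{tr}\bigl(\mathfrak{m}_n g_{n-1}\cdots g_i t_i^k\bigr),
\]
where on the right the argument lies in $\mathrm{Y}_{d,n}(u)$, and extend linearly (with the convention $x_0=1$). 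By design, $\mathrm{tr}(1)=1$, the framing rule holds, and the Markov rule holds on basis words ending in $g_n$.

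\textbf{Cyclicity.} The substantive step is verifying $\mathrm{tr}(ab)=\mathrm{tr}(ba)$ in $\mathrm{Y}_{d,n+1}(u)$. By linearity and induction on $n$, it suffices to test this when $a$ is a basis word of $\mathrm{Y}_{d,n+1}(u)$ and $b$ is one of the generators; generators lying in $\mathrm{Y}_{d,n}(u)$ are handled by combining the inductive hypothesis with the splitting property of monomials. For $b=t_{n+1}$ one commutes $t_{n+1}$ past $a$ via (\ref{YH5})--(\ref{YH7}); the framing exponents that are produced are absorbed by the framing rule and match on both sides. For $b=g_n$ one expands $\mathrm{tr}(g_n a g_n)$ and $\mathrm{tr}(a g_n^2)$ through the quadratic relation $g_n^2 = 1 + (u-1)e_n + (u-1)e_n g_n$ together with the cross relations $g_n t_n = t_{n+1} g_n$ and $g_n t_{n+1}=t_n g_n$, and rewrites everything in inductive basis form.

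The main obstacle will be this last reduction. The idempotent $e_n = \tfrac{1}{d}\sum_{s=0}^{d-1} t_n^s t_{n+1}^{d-s}$ introduces averages $\tfrac{1}{d}\sum_s x_{d-s}(\,\cdot\,)$ once the $t_{n+1}$ factors are absorbed by the framing rule, while the $t_n^s$ factors are pushed back into $\mathrm{Y}_{d,n}(u)$ and interact with the normal form of $a$. Verifying that the two resulting expressions coincide requires a careful case analysis on whether, and how many times, $g_{n-1}$ appears in the normal form of $a$; each case reduces, after several applications of the braid relation and the cross relations, to an identity already guaranteed by the inductive hypothesis. This is precisely the calculation that dictates the shape of the answer and explains why $\mathrm{tr}$ lands in $\mathbb{C}[z,x_1,\ldots,x_{d-1}]$ rather than in $\mathbb{C}$.
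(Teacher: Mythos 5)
This theorem is not proved in the present paper at all: it is quoted verbatim from Juyumaya's article \cite{ju} (Theorem 12 there), so there is no in-paper argument to compare against. Judged on its own terms, your proposal has the right architecture, and it is essentially the architecture of the actual proof in \cite{ju} (itself modelled on Jones's construction of the Ocneanu trace): uniqueness follows because the four rules evaluate every word of the inductive basis of Proposition~\ref{inductiveYH} down to a monomial in $z, x_1,\ldots,x_{d-1}$, and existence is obtained by turning that evaluation into a recursive definition of a linear functional (a tower of conditional expectations ${\rm Y}_{d,n+1}(u)\to{\rm Y}_{d,n}(u)$) and then checking the rules.

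The genuine gap is that the only nontrivial part of the theorem --- the verification that the recursively defined functional satisfies ${\rm tr}(ab)={\rm tr}(ba)$ --- is described but not performed. Your reduction to $b$ a generator is legitimate (conjugation-invariance for generators propagates to products by iterating ${\rm tr}((ab_1)b_2)={\rm tr}(b_2ab_1)$), but the cases you then wave at are exactly where all the work lives: for $b=g_{n-1}$ the generator does not commute with the $g_n$ occurring in the normal form of $a$, and for $b=g_n$ the expansion of ${\rm tr}(g_nag_n)$ via the quadratic relation \eqref{quadratic} produces terms $(u-1)e_ng_n$ and $(u-1)e_n$ whose traces must be matched against ${\rm tr}(ag_n^2)$; this forces one to prove auxiliary identities such as ${\rm tr}(e_n\mathfrak{m}_n)$ being computable from ${\rm tr}(\mathfrak{m}_n)$-type data and to handle separately the two shapes $\mathfrak{m}_ng_n\cdots g_it_i^k$ and $\mathfrak{m}_nt_{n+1}^k$ of the basis. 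Without carrying out this case analysis one has not shown that the recursive definition is consistent (i.e., that the value of ${\rm tr}$ on a product is independent of how the product is rewritten into the inductive basis), and consistency is precisely what can fail for an arbitrarily chosen quadratic relation --- it is the content of the theorem, not a routine afterthought. As written, your text is a correct plan of the proof in \cite{ju}, not a proof.
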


Using the trace rules of Theorem~\ref{Juyutrace} and setting  $x_0:=1$, we deduce that ${\rm tr}(e_i)$ takes the same value for all $i$, and this value is denoted by $E$:
\[
E := {\rm tr}(e_i)= \frac{1}{d}\sum_{s=0}^{d-1}x_{s}x_{d-s}.
\]
Moreover, we also define {\it the shift by $m$ of $E$}, where $0\leq m \leq d-1$, by:
\[
E^{(m)} :={\rm tr}(e_i^{(m)})= \frac{1}{d}\sum_{s=0}^{d-1}x_{m+s}x_{d-s} .
\]
Notice that  $E = E^{(0)}$.

\section{Fourier transform and the ${\rm E}$--system}\label{sectioninv} 
 An important  tool in the  proof of the main theorem are some classical identities of harmonic analysis on  the group of integers modulo $d$.  More precisely, we will use identities linking the convolution product and the product by coordinates  through the Fourier transform. 
These tools were also used in solving the so-called  ${\rm E}$--system, see \cite[Appendix]{jula}. Thus, in this section we  shall give  some notations and recall some well-known and useful facts of the Fourier transform along with some facts for the ${\rm E}$--system. 
\subsection{{\it Computations in $\mathbb{C}C_d$}}
Recall that $C_d$ is the cyclic group of order $d$, generated by  $t$. The {\it product by coordinates} in $\mathbb{C}C_d$ is defined  by the formula: 
\[
\left( \sum_{r=0}^{d-1} a_r t^r\right) \cdot 
\left( \sum_{s=0}^{d-1} b_s t^s\right)=
 \sum_{i=0}^{d-1} a_ib_i t^i
\]
and the  {\it convolution product} is defined by the formula:
\begin{equation} \label{AA1}
\left( \sum_{r=0}^{d-1} a_r t^r\right) \ast  
\left( \sum_{s=0}^{d-1} b_s t^s\right) = 
\sum_{r=0}^{d-1} \left( \sum_{s=0}^{d-1} a_s b_{r-s} \right) t^r .
\end{equation}
In order to define the Fourier transform on $C_d$ we need to introduce the following elements:

\[
\mathbf{i}_a:=\sum_{s=0}^{d-1} \chi_a( t^s) t^s\qquad (a\in {\mathbb Z}/d{\mathbb Z}),
\]
where the  $\chi_k$'s denote the characters of the group $C_d$, namely: 
\begin{equation}\label{charact}
\chi_k( t^m) = \cos\frac{2\pi km}{d} + i \sin \frac{2 \pi km}{d} \qquad (k, m\in {\mathbb{Z}/d\mathbb{Z}} ). 
\end{equation}
One can verify that: 
\[
\mathbf{i}_a \ast \mathbf{i}_b=
\begin{cases}
d\, \mathbf{i}_a & \mbox{if } a=b\\
0 & \mbox{if } a\neq b .
\end{cases}
\]
On the other hand, we shall denote by $\delta_a$ the element $t^a$ of the canonical  linear basis of $\mathbb{C}C_d$. It is clear 
that:
\[
\delta_a \cdot \delta_b=
\begin{cases}
\delta_a & \mbox{if } a=b\\
0 & \mbox{if } a\neq b .
\end{cases}
\]
The {\it Fourier transform} is the linear automorphism on $\mathbb{C}C_d$, defined by: 
\begin{equation}\label{fourtrans}
y:=  \sum_{r=0 }^{d-1}a_rt^r \mapsto \widehat{y}:= \sum_{s=0}^{d-1}(y\ast \mathbf{i}_s)(0)t^s  ,
\end{equation}
where $(y\ast \mathbf{i}_s)(0)$ denote the coefficient of $\delta_0$ in the convolution $y\ast \mathbf{i}_s$.

The next proposition gathers  the most important properties of the Fourier transform used in the paper.
\begin{proposition}[{\cite[Chapter~2]{te}}]\label{propietrans}
For any $y$ and $y^{\prime}$ in $\mathbb{C}C_d$, we have:
\begin{enumerate}
\item $\widehat{y \ast y^{\prime}}=\widehat{y}\cdot \widehat{y^{\prime}}$
\item $\widehat{y\cdot y^{\prime}}= d^{-1}\widehat{y} \ast \widehat{y^{\prime}}$
\item $\widehat{\delta}_a=\mathbf{i}_{-a}$
\item $\widehat{\mathbf{i}}_a=d \delta_a$
\item If $ y = \sum_{r=0}^{d-1}a_rt^r$, then 
$
\widehat{\widehat{y}}=  d \sum_{r=0}^{d-1}a_{-r}t^r .
$
\end{enumerate}
\end{proposition}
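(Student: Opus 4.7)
The plan is to reduce all five identities to computations on the two natural bases $\{\delta_a\}_{a\in\mathbb{Z}/d\mathbb{Z}}$ and $\{\mathbf{i}_a\}_{a\in\mathbb{Z}/d\mathbb{Z}}$ of $\mathbb{C}C_d$ and then extend by linearity. Items (3) and (4) are the cornerstone, since they describe how the Fourier transform interchanges these two bases (up to a sign and a factor of $d$); once they are in hand, items (1), (2) and (5) follow at once from them together with the bilinearity of both products and the character orthogonality identity $\mathbf{i}_a \ast \mathbf{i}_b = d\,\mathbf{i}_a$ if $a=b$ and $0$ otherwise, which is already stated in the excerpt.

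For item (3), I compute $\widehat{\delta_a}$ directly from the definition: since $\delta_a \ast \mathbf{i}_s = \sum_m \chi_s(m)\,t^{a+m}$, its coefficient at $t^0$ is $\chi_s(-a)=\chi_{-a}(s)$, whence $\widehat{\delta_a} = \sum_s \chi_{-a}(s)\,t^s = \mathbf{i}_{-a}$. Item (4) then drops out of the orthogonality identity: only the $s=a$ summand in $\widehat{\mathbf{i}_a} = \sum_s (\mathbf{i}_a \ast \mathbf{i}_s)(0)\,t^s$ contributes, and its constant term is $d\,\mathbf{i}_a(0)=d$, giving $\widehat{\mathbf{i}_a} = d\,\delta_a$.

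To prove (1) it suffices by bilinearity to verify $\widehat{\delta_a \ast \delta_b} = \widehat{\delta_a}\cdot\widehat{\delta_b}$: the left side equals $\widehat{\delta_{a+b}} = \mathbf{i}_{-(a+b)}$ by (3), while the right side is $\mathbf{i}_{-a}\cdot\mathbf{i}_{-b} = \sum_s \chi_{-a}(s)\chi_{-b}(s)\,t^s = \mathbf{i}_{-(a+b)}$, using multiplicativity of characters. Identity (2) is the dual check: $\delta_a \cdot \delta_b$ vanishes unless $a=b$, in which case it equals $\delta_a$, and the factor $d^{-1}$ in $d^{-1}\mathbf{i}_{-a}\ast\mathbf{i}_{-b}$ exactly cancels the $d$ produced by orthogonality. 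Finally, (5) follows by applying (3) and then (4) to each basis vector, $\widehat{\widehat{\delta_a}} = \widehat{\mathbf{i}_{-a}} = d\,\delta_{-a}$, and then extending linearly and reindexing $r\mapsto -r$ to recover $\widehat{\widehat{y}} = d\sum_r a_{-r}t^r$. The only real bookkeeping concern, and hence the main pitfall, is keeping the sign conventions straight throughout: the identities $\chi_s(-m)=\chi_{-s}(m)$ and $\chi_s(m)=\chi_m(s)$ are what produce the minus signs in (3) and in the formula for $\widehat{\widehat{y}}$, and a careless computation can silently absorb or duplicate them.
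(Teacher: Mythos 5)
Your proof is correct. The paper itself offers no proof of this proposition --- it is quoted from Terras \cite{te} as standard harmonic analysis on $\mathbb{Z}/d\mathbb{Z}$ --- so there is no argument to compare against; your verification on the bases $\{\delta_a\}$ and $\{\mathbf{i}_a\}$, with (3) and (4) doing the real work and (1), (2), (5) following by bilinearity and orthogonality, is the standard and complete way to establish it, and all the sign and normalization bookkeeping (e.g.\ $\chi_s(-a)=\chi_{-a}(s)$, the cancellation of $d^{-1}$ against the $d$ from $\mathbf{i}_a\ast\mathbf{i}_a=d\,\mathbf{i}_a$) checks out.
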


Finally, we note that  the  elements in the group algebra $\mathbb{C}C_d$ can also
be identified to the set of functions $f:C_d\rightarrow \mathbb{C}$,
where the identification is as follows:
\begin{equation} \label{coresp1}
(f:C_d\rightarrow \mathbb{C})   \longleftrightarrow \sum_{k=0}^{d-1} f(t^k) t^k\in \mathbb{C}C_d.
\end{equation}
Some times we shall use this identification, since it makes some computations easier.
\subsection{{\it The ${\rm E}$--system and its solutions}}\label{esyssec}
The ${\rm E}$--system is a non-linear system of equations that was introduced in order to find the  necessary and sufficient conditions that need to be applied on the parameters $x_i$  of  $\rm tr$ so that the definition of link invariants from the Yokonuma--Hecke algebra would be possible \cite{jula}.

\begin{definition}[{\cite[Definition 11]{jula}}]
 \rm
We say that the $(d-1)$-tuple  of complex numbers $(\rm{x}_1, \ldots , \rm{x}_{d-1})$ satisfies the {\it ${\rm E}$--condition} if
$\rm{x}_1,\ldots , \rm{x}_{d-1}$ satisfy the following system of non-linear equations in $\mathbb{C}$, the {\it ${\rm E}$--system}:
\begin{equation}\label{Esystem}
E^{(m)}  =   {\rm x}_{m}E \qquad (1\leq m \leq d-1).
\end{equation}
\end{definition}

In \cite[Appendix]{jula} the full set of solutions of the ${\rm E}$--system is given by G\'{e}rardin using some tools of harmonic analysis on finite groups. More precisely, he interpreted the solution 
{$({\rm x}_1,\ldots,{\rm x}_d)$} 
of the ${\rm E}$--system,   
 as a certain complex function $x_{_D} : C_d \rightarrow \mathbb{C}$. The solution is 
parametrized by a non-empty subset $D$ of $C_d^*$, where 
$C_d^*$ denotes the dual group of $C_d$, i.e. the space of characters of $C_d$. 
Since $C_d \cong C_d^* \cong \mathbb{Z}/d \mathbb{Z}$, by small abuse of notation, we will consider  $D$  as  a subset of $\mathbb Z/d\mathbb Z$. Recall that the characters $\chi_k$ of $C_d$ are given by $t^a \mapsto \chi_k(t^a)$, where $k$ runs over $\mathbb{Z}/d\mathbb{Z}$, see Eq.~\ref{charact}.

The dependence of $x_D$ on $D$ is given by the following equation of functions:
 \begin{equation}\label{solEsys}
x_{_D}={\frac{1}{ \vert D\vert}}\sum_{k\in D}\chi_k.
\end{equation}

Notice that the function $x_{_D}$ can be also seen as an  element in $\mathbb{C}C_d$, namely:
\begin{equation}\label{solECd}
x_{_D} = \sum_{j=0}^{d-1} {\rm x}_j t^j ,
\end{equation}
where  ${\rm x}_j =x_{_D}(t^j)= \frac{1}{\vert D\vert }\sum_{k\in D}\chi_{k}(t^j)$.

A simple computation shows that the convolution products, where $x$ is an element in the group algebra $\mathbb{C}C_d$, are given by:
\begin{equation}\label{x*x}
x \ast x = d \sum_{k=0}^{d-1}  \mathrm{tr} (e_i^{(k)})t^k =d \sum_{k=0}^{d-1}  E^{(k)}t^k, 
\qquad
x\ast x \ast x =d^2 \sum_{k=0}^{d-1} \mathrm{tr}(e_1^{(k)}e_2) t^k,
\end{equation}
see also  \cite[Lemma 2]{gojukola}

\begin{remark} \rm
It is worth noting that the formula for the solutions of the ${\rm E}$--system can be interpreted as a generalization of the Ramanujan sum. Indeed, by taking the subset $P$ of  $C_d$ consisting  
of the numbers coprime to $d$, then the solution parametrized by $P$ is, up to the factor $\vert P\vert$,  the Ramanujan sum $c_d(k)$ (see \cite{ra}).
\end{remark}

We  finish this section with a theorem which yields the main connection among the solutions of the ${\rm E}$--system and the trace ${\rm tr}$.  

\begin{theorem}[{\cite[Theorem 7]{jula}}]\label{trmultip}
If the trace parameters  $(x_1, \ldots , x_{d-1} )$ satisfy the  ${\rm E}$--condition,  then 
\[
{\rm tr}(\alpha e_n) = {\rm tr}(\alpha) {\rm tr}(e_n)\qquad (a \in {\rm Y}_{d,n}(u)).
\]
\end{theorem}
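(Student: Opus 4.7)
The most efficient approach is to reformulate the claim as a convolution identity in $\mathbb{C}C_d$ and dispatch it using the Fourier transform. Expanding $e_n = \frac{1}{d}\sum_s t_n^s t_{n+1}^{d-s}$ and applying the trace rule for $t_{n+1}$ gives ${\rm tr}(\alpha e_n) = \frac{1}{d}\sum_s x_{d-s}\,{\rm tr}(\alpha t_n^s)$. Setting $\beta_\alpha(t) := {\rm tr}(\alpha t_n^t)$ and applying this identity with $\alpha t_n^r$ in place of $\alpha$ for every $r\in\mathbb{Z}/d\mathbb{Z}$, the claim ${\rm tr}(\alpha e_n) = E\,{\rm tr}(\alpha)$ becomes the convolution identity $x \ast \beta_\alpha = dE\,\beta_\alpha$ in $\mathbb{C}C_d$, where $x := \sum_s x_s t^s$. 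By Proposition~\ref{propietrans}(1), the Fourier transform turns this into $(\widehat{x} - dE)\,\widehat{\beta_\alpha} = 0$. Observing that the E-condition is equivalent to $\widehat{x}(\widehat{x} - dE) = 0$ (i.e.\ $\widehat{x}$ takes only the values $0$ and $dE$), the theorem reduces to showing that $\widehat{\beta_\alpha}$ is supported where $\widehat{x}\ne 0$.

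To establish this support statement, by linearity I reduce to $\alpha$ in the standard basis \eqref{standbas}, $\alpha = t_1^{a_1}\cdots t_n^{a_n} g_w$. Iterating relations \eqref{YH5}--\eqref{YH7} yields $g_w t_n^t = t_{w(n)}^t g_w$, so $\beta_\alpha(t) = {\rm tr}(t_1^{a_1}\cdots t_{w(n)}^{a_{w(n)} + t}\cdots t_n^{a_n}\, g_w)$. A separate induction using cyclicity of ${\rm tr}$, the Markov property, and the $t_j$-trace rules establishes that ${\rm tr}(t^a g_w) = Q_w\,\prod_{C} x_{\sum_{j \in C} a_j}$, with the product over cycles $C$ of $w$ and $Q_w$ depending only on the conjugacy class of $w$. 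Substituting, $\beta_\alpha(t) = c_\alpha\, x_{b_\alpha + t}$ is a scalar multiple of a shift of $x$; hence $\widehat{\beta_\alpha}(k)$ is a scalar multiple of $\widehat{x}(k)$ (up to a character factor from Proposition~\ref{propietrans}), and vanishes wherever $\widehat{x}$ does.

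An equivalent and more elementary route is induction on $n$ via Proposition~\ref{inductiveYH}. The base $n=1$ is the direct check that $E^{(k)} = x_k E$ yields the theorem on $\{t_1^k\}$. In the inductive step, type-(a) elements $\mathfrak{m}_{n-1}t_n^k$ reduce directly because $\mathfrak{m}_{n-1}$ commutes with $e_n$; type-(b) elements $\mathfrak{m}_{n-1}g_{n-1}g_{n-2}\cdots g_i t_i^k$ are handled by commuting $e_n$ past $g_{n-2},\ldots,g_i,t_i^k$, applying the identity $g_{n-1}e_n = e_{n-1,n+1}g_{n-1}$ (a direct computation from \eqref{YH5}--\eqref{YH7}), expanding $e_{n-1,n+1}$, pulling out $t_{n+1}^{d-s}$ via its trace rule, stripping $g_{n-1}$ via Markov, and telescoping $t_{n-1}^s$ through the chain $g_{n-2}\cdots g_i$ into $t_i^s$. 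The resulting identity $\tfrac{1}{d}\sum_s x_{d-s}\,{\rm tr}(\nu t_i^s) = E\,{\rm tr}(\nu)$ for $\nu \in {\rm Y}_{d,n-1}(u)$ is exactly the inductive hypothesis ${\rm tr}(\nu e_{n-1}) = E\,{\rm tr}(\nu)$ when $i=n-1$, and for $i < n-1$ follows from the same Fourier-support argument of the previous paragraph applied internally to ${\rm Y}_{d,n-1}(u)$.

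The main obstacle is precisely the case $i < n-1$ of the auxiliary identity, equivalently the shape lemma $\beta_\alpha = c_\alpha\,(\text{shift of }x)$ for arbitrary basis $\alpha$; this requires the explicit cycle-indexed formula for ${\rm tr}(t^a g_w)$, proved by an auxiliary induction on the length of $w$. Once this lemma is in hand, the E-condition in its Fourier form $\widehat{x}(\widehat{x}-dE)=0$ closes the argument.
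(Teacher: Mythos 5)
First, a contextual point: the paper does not prove this statement at all --- it is quoted from \cite{jula2} (Theorem 7) --- so your argument has to stand on its own. Your opening reduction is correct and attractive: setting $\beta_\alpha(t):={\rm tr}(\alpha t_n^t)$, the theorem applied to the family $\alpha t_n^r$ is equivalent to $x\ast\beta_\alpha=dE\,\beta_\alpha$, the ${\rm E}$--condition is equivalent to $\widehat{x}\,(\widehat{x}-dE)=0$, and everything would follow if $\widehat{\beta_\alpha}$ vanished wherever $\widehat{x}$ does. The genuine gap is the lemma you invoke to get this support property: the closed formula ${\rm tr}(t^a g_w)=Q_w\prod_C x_{\sum_{j\in C}a_j}$ with $Q_w$ a class function is false. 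Already for $d=1$ the Ocneanu trace is not constant on conjugacy classes ($\tau(h_1)=z$ while $\tau(h_1h_2h_1)=uz+(u-1)z^2$), and for $d\ge 2$ the quadratic relation \eqref{quadratic} injects the idempotents $e_i$ into every length--reducing product, producing convolution terms. Concretely,
\begin{equation*}
{\rm tr}\bigl(t_1^a t_2^b t_3^c\, g_1g_2g_1\bigr)=z\,x_b x_{a+c}+(u-1)z\,E^{(a+b+c)}+(u-1)z^2\, x_{a+b+c},
\end{equation*}
which is not of the form $Q\,x_b x_{a+c}$ for generic parameters (take $d=2$ and compare $(a,b,c)=(0,0,0)$ with $(0,1,1)$). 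Consequently the derived claim that $\beta_\alpha$ is a scalar multiple of a single translate of $x$ is also false: for this $\alpha$ (with $w(3)=1$) it is a combination of two different translates of $x$ plus a translate of $\tfrac1d\,x\ast x$.

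What is true, and all your Fourier argument actually needs, is the weaker structural fact suggested by the displayed example: $\beta_\alpha$ lies in the convolution ideal $x\ast\mathbb{C}C_d$, i.e.\ it is a linear combination of translates of the convolution powers $x,\ x\ast x,\ x\ast x\ast x,\dots$, whence $\widehat{\beta_\alpha}$ vanishes wherever $\widehat{x}$ does. But this is nowhere established in your text --- it is replaced by the incorrect cycle formula --- and your second, inductive route does not avoid the issue: the type--(b) step with $i<n-1$ is explicitly referred back to the same unproved support claim. So as written the proof has a genuine gap. The skeleton is salvageable: the ideal--membership lemma can be proved by induction on the inductive basis of Proposition~\ref{inductiveYH}, carefully tracking the $e_i$--corrections coming from the multiplication rule \eqref{rule}, and only then does the ${\rm E}$--condition in the form $\widehat{x}(\widehat{x}-dE)=0$ close the argument.
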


\section{A Framization of the Temperley--Lieb algebra} \label{motivation}
In this section we explore three quotients of the Yokonuma--Hecke algebra, ${\rm YTL}_{d,n}(u)$, ${\rm FTL}_{d,n}(u)$ and ${\rm CTL}_{d,n}(u)$, as potential candidates  for the framization of the Temperley--Lieb algebra and we select one of them, namely ${\rm FTL}_{d,n}(u)$, as the most appropriate in view of our topological aims.
\subsection{{\it The three potential candidates}} As discussed   during  the Introduction, the Yokonuma--Hecke algebra can be interpreted as the  framization of the Iwahori--Hecke algebra, which is a knot algebra. Thus a natural question arises, the definition of  a framization for the knot algebra Temperley--Lieb. Considering the fact that the Temperley--Lieb algebra can be defined as a quotient of the Iwahori--Hecke algebra,  it is natural to try and define a framization of the Temperley--Lieb algebra as a quotient of the Yokonuma--Hecke algebra. Recall now  that the defining ideal of the Temperley--Lieb algebra (Definition \ref{tlalgebra}) is generated by the Steinberg elements which are related to the subgroups $\langle  s_i, s_{i+1}\rangle$ of $S_n$, for all $i$. These subgroups can be also regarded as subgroups of $C_{d,n}$. Therefore, using the multiplication rule of  Eq. \ref{rule} we are able to define the analogous Steinberg elements $g_{i,i+1}$ in ${\rm Y}_{d,n}(u)$, 
\begin{equation}\label{ytldefid}
g_{i,i+1} := \sum_{w\in \langle  s_i, s_{i+1}\rangle}g_w \qquad \text{for all } i  .
\end{equation}
In \cite[Definition 2]{gojukola} we defined a potential candidate for the framization of the Temperley--Lieb algebra, the  {\it Yokonuma--Temperley--Lieb algebra}, denoted by ${\rm YTL}_n(u)$ which is defined as the quotient of  ${\rm Y}_{d,n}(u)$ over the two-sided ideal generated by the  $g_{i,i+1}$'s for all $i$.  It is not difficult to show that this ideal is in fact principal and it is generated by the element $g_{1,2}$. Moreover, the necessary and sufficient conditions  for the trace ${\rm tr}$ to pass through to ${\rm YTL}_n(u)$ were studied \cite[Theorem 6]{gojukola}. Unfortunately, these conditions turn out to be too strong. Namely, the trace parameters $x_i$ must be $d^{th}$ roots of unity, giving rise to obvious, special solutions of the ${\rm E}$--system, which imply topologically loss of the framing information. Moreover, if we restrict to the case of classical links, by representing the Artin braid group $B_n$ in ${\rm Y}_{d,n}(u)$, considering the $t_i$'s as formal generators, and then taking the quotient over the ideal that is generated by the $g_{i,i+1}$'s \cite[Section 5]{gojukola}, and using the results of \cite{ChLa}, the derived classical link invariants for the algebras ${\rm YTL}_{d,n}(u)$ coincide with the classical Jones polynomial.
For the above reasons, ${\rm YTL}_{d,n}(u)$ is discarded as framization of ${\rm TL}_n(u)$. Finally, we note that the representation theory of this algebra  has been studied extensively in \cite{ChPou}. 

\smallbreak

Given the fact that ${\rm Y}_{d,n}(u)$ can be considered as a $u$-deformation of $\mathbb{C}C_{d,n}$ (recall the discussion in Section~\ref{yhsec}), it is natural to consider subgroups of $C_{d,n}$ that involve in their generating set the framing generators of the $i$-th and $j$-th strands along with $\langle s_i, s_j \rangle$. As a first attempt, we consider the following subgroups of $C_{d,n}$:
\[
C^{i}_{d,n} := \langle t_i, t_{i+1}, t_{i+2}\rangle \rtimes \langle  s_i, s_{i+1}\rangle\quad \text{for all } i. 
\] 
Notice that these  subgroups are isomorphic to the group $C_{d,3}$, in analogy to the classical case of ${\rm TL}_n(u)$. We define now the elements $c_{i,i+1}$ in ${\rm Y}_{d,n}(u)$ as follows:
\begin{equation}\label{defcij}
 c_{i,i+1} =  \sum_{c\in C_{d,n}^{i}}g_{c}  .
\end{equation}
We then have the following definition:
\begin{definition} \rm
For $n\geq 3$, we define the algebra ${\rm CTL}_{d,n}(u)$ as the quotient of  the  algebra ${\rm Y}_{d,n}(u)$ by the two-sided ideal generated by the  $c_{i,i+1}$'s, for all $i$.
 We shall call ${\rm CTL}_{d,n}(u)$  the {\it Complex Reflection Temperley--Lieb algebra}.
\end{definition}
\begin{remark} \rm
The denomination Complex Reflection Temperley--Lieb algebra has to do with the fact that the underlying group of ${\rm CTL}_{d,n}(u)$ is isomorphic to the complex reflection group $G(d,1,3)$.
\end{remark}
\smallbreak
As it will be shown in Theorem  \ref{ctlthm}, the necessary and sufficient conditions such that $\rm tr$ passes to ${\rm CTL}_{d,n}(u)$ are, contrary to the case of ${\rm YTL}_{d,n}(u)$, too relaxed, especially on the trace parameters $x_i$. So, in order to define link invariants from the algebras ${\rm CTL}_{d,n}(u)$, the ${\rm E}$--condition must be imposed on the $x_i$'s. 
\smallbreak

This indicates that the desired framization of the Temperley--Lieb algebra for our topological purposes could be an intermediate algebra between these two. We achieve this, by using for the defining ideal an intermediate subgroup of $C_d^n$ that lies between  $\langle  s_i, s_{i+1}\rangle $ and $C^{i}_{d,n}$. Indeed, we consider the following subgroups of $C_{d,n}$, 
\[
H^{i}_{d,n} := \langle t_i t_{i+1}^{-1}, t_{i+1}t_{i+2}^{-1} \rangle \rtimes \langle s_i, s_{i+1} \rangle \quad \text{for all } i.
\]
We now introduce the following elements:
\[
r_{i,i+1} :=   \sum_{x\in H^{i}_{d,n}} g_x\quad \text{for all } i. 
\]
\begin{definition}\label{ftldefine}\rm
For $n \geq 3$, the {\it Framization of the Temperley--Lieb algebra}, denoted  ${\rm FTL}_{d,n}(u)$, is defined as the quotient ${\rm Y}_{d,n}(u)$ over the two-sided ideal generated by the elements $r_{i,i+1}$, for all  $i$.
\end{definition}

\smallbreak
 
\begin{remark}\label{collapse}\rm 
Notice that when $d=1$, the Yokonuma--Hecke algebra coincides with the Iwahori--Hecke algebra, hence it follows that  ${\rm YTL}_{1,n}(u)$ also coincides with  ${\rm TL}_n(u)$. Moreover, in this case 
the subgroups $H^{i}_{d,n}$ and $C^{i}_{d,n}$ also collapse to $  \langle s_i, s_{i+1} \rangle$, which is isomorphic to $S_3$. Hence,   ${\rm FTL}_{1,n}(u)$ and ${\rm  CTL}_{1, n}(u)$ coincide with  ${\rm TL}_n(u)$ too.
\end{remark}    

\subsection{{\it Relating the three quotient algebras}}
We shall now show how the algebras defined above are related. Notice that the defining ideal for each quotient algebra mentioned above is generated by  sums of elements $g_x$, where $x$ belongs to the underlying group of each ideal. More precisely, the underlying group of the defining ideal of ${\rm YTL}_{d,n}(u)$ is $S_3$ of ${\rm FTL}_{d,n}(u)$ is $H^{i}_{d,n}$ and of ${\rm CTL}_{d,n}(u)$ is $C^{i}_{d,n}$. We have the following inclusion of groups : $ S_3 \leq H_{d,n}^i \leq C_{d,n}^i$. We will show that this implies the following inclusions of ideals:
\begin{equation}\label{idealincl}
 \langle c_{i,i+1} \rangle \lhd \langle r_{i,i+1} \rangle \lhd \langle g_{i,i+1} \rangle .
\end{equation}
The second inclusion of the ideals, $\langle r_{i,i+1} \rangle \lhd \langle g_{i,i+1} \rangle$, is clear. Indeed, every $x$ in $H^{i}_{d,n}$ can be written in the form:
\[
x=t_i^at_{i+1}^{-a}t_{i+1}^{b}t_{i+2}^{-b}\,w = t_i^at_{i+1}^{b-a}t_{i+2}^{-b}\,w, \quad \mbox{where } w\in  S_3.
\]
Therefore, from the multiplication rule of Eq. \ref{rule}, we have  that $g_x = t_i^at_{i+1}^{b-a}t_{i+2}^{-b}g_w$. Thus we can rewrite the elements $r_{i, i+1}$ in the following form:

\[
r_{i,i+1} = \sum_{\stackrel{a, b = 0}{w\in S_3}}^{d-1}t_i^at_{i+1}^{b-a}t_{i+2}^{-b}\, g_w 
= \left(\sum_{a, b = 0}^{d-1}t_i^at_{i+1}^{b-a}t_{i+2}^{-b}\right) 
\left(\sum_{w \in S_3} g_w \right),
\]
hence
\begin{equation}\label{rij=}
r_{i,i+1} = d^2 e_ie_{i+1} g_{i,i+1} .
\end{equation}
We shall proceed now with the proof of the first inclusion of ideals. We observe that:
\begin{equation}\label{semdirprod}
 C^{i}_{d,n} = H^{i}_{d,n} \rtimes C_d .
\end{equation} 
Indeed, let $x = t_i^a t_{i+1}^b t_{i+2}^c \, w$ an element in $ C^{i}_{d,n}$, where $w\in S_3 $, and let $\phi$ be the following homorphism:
 \begin{align*}
 \phi: C^{i}_{d,n}& \to \langle t_i \rangle \cong  C_d\\
 x &\mapsto t_i^{a+b+c}.
\end{align*}
Observe that $\ker\phi = H^{i}_{d,n}$, so $\phi\big |_{H^{i}_{d,n}}=  {\rm id}_{C_d}$, which implies   Eq.~\ref{semdirprod}. 
  Therefore, for the element  $x \in C^{i}_{d,n}$ we have a unique decomposition $x= t_i^k y$, where $0\leq k\leq  d-1$ and  $y \in  H^{i}_{d,n}$. This decomposition of the elements of $C^{i}_{d,n}$ together with   the multiplication rule in Eq. \ref{rule}, implies $g_x = t_i^k g_y$. This allows us to write the elements $c_{i,i+1}$ of Eq.~\ref{defcij} in the following equivalent form:
\[
c_{i,i+1}  = \sum_{ \stackrel{0\leq k\leq  d-1}{y\in H^{i} _{d,n}}}t_i^{k}  g_y ,
\]
hence:
\begin{equation}\label{cij=}
c_{i,i+1}  = \left( \sum_{ k= 0 }^{d-1}t_i^{k} \right) r_{i,i+1} .
\end{equation}

Equation~\ref{cij=}  implies that ${\rm CTL}_{d,n}(u)$ projects onto ${\rm FTL}_{d,n}(u)$ while Eq.~\ref{rij=}  implies that ${\rm FTL}_{d,n}(u)$ projects onto ${\rm YTL}_{d,n}(u)$. 
We have thus proved the following:
\begin{proposition}\label{idealinclprop}
The inclusions of ideals of Eq.~\ref{idealincl} yield the following natural commutative diagram of epimorphisms:
\[
\xymatrix{ 
&{\rm Y}_{d,n}(u) \ar[d] \ar[r] & {\rm CTL}_{d,n}(u)\ar[d] \ar[r] & {\rm FTL}_{d,n}(u) \ar[dl] \ar[r] & {\rm YTL}_{d,n}(u) \ar@/^/[dll]\\
& {\rm H}_n(u) \ar[r] & {\rm TL}_n (u) & &}
\]
where the non-horizontal arrows are defined by mapping the framing generators to $1$.
\end{proposition}

\subsection{{\it Principality of the ideals}}
 
It is known that the defining ideal of the Temperley--Lieb algebra is principal \cite{jo}. We are going now to prove that the defining ideals of ${\rm FTL}_{d,n}(u)$ and ${\rm CTL}_{d,n}(u)$ respectively are principal ideals too. The method used in the proof is standard \cite{jo} but for self-containedness of the paper we will sketch the basic ideas. We  start with a technical lemma. 

\begin{lemma}\label{lemmaforprop}
The following hold in ${\rm Y}_{d,n}(u)$ for all $i =1, \ldots , n-2$,  $j = 1 , \ldots , n $ and $0\leq a, b, c \leq d-1$:

\[
\begin{array}{crcl}
{\it (1)} &t_{j} &=& ( g_1 \ldots g_{n-1})^{j-1}\, t_1 \, (g_1\ldots g_{n-1})^{-(j-1)}\\
{\it (2)} &g_{i} &=&( g_1 \ldots g_{n-1})^{i-1} \, g_1 \, (g_1\ldots g_{n-1})^{-(i-1)}\\
{\it (3)} &t_i^a t_{i+1}^b t_{i+2}^c &=& (g_1 \ldots g_{n-1})^{i-1}\, t_1^a t_2^b t_3^c \,(g_1 \ldots g_{n-1})^{-(i-1)}\\
{\it (4)} &t_i^a t_{i+1}^b t_{i+2}^c g_i &=& (g_1 \ldots g_{n-1})^{i-1} \,t_1^a t_2^b t_3^c g_1\,(g_1 \ldots g_{n-1})^{-(i-1)}\\
{\it (5)} &t_i^a t_{i+1}^b t_{i+2}^c g_{i+1}&=&(g_1 \ldots g_{n-1})^{i-1} \,t_1^a t_2^b t_3^c g_2\, (g_1 \ldots g_{n-1})^{-(i-1)}\\
{\it (6)} &t_i^a t_{i+1}^b t_{i+2}^c g_{i} g_{i+1} &=& (g_1 \ldots g_{n-1})^{i-1} \,t_1^a t_2^b t_3^c g_1g_2\,(g_1 \ldots g_{n-1})^{-(i-1)}\\
{\it (7)} &t_i^a t_{i+1}^b t_{i+2}^c g_{i+1}g_i &=& (g_1 \ldots g_{n-1})^{i-1}\, t_1^a t_2^b t_3^c g_2 g_1\,(g_1 \ldots g_{n-1})^{-(i-1)}\\
{\it (8)} & t_i^a t_{i+1}^b t_{i+2}^c g_ig_{i+1} g_i &=& (g_1 \ldots g_{n-1})^{i-1}\, t_1^a t_2^b t_3^c g_1 g_2 g_1 \,(g_1 \ldots g_{n-1})^{-(i-1)}.
\end{array} 
\]
\end{lemma}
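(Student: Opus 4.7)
The plan is to use Lemma~\ref{lemmainductive} together with the observation that conjugation by the element $\Gamma_i := (g_1 g_2 \cdots g_{n-1})^{i-1}$ is an inner algebra automorphism of ${\rm Y}_{d,n}(u)$. With this in hand, each of the six identities reduces to the verification of how $\Gamma_i$ conjugates each of the generators that appear on the right--hand side, followed by a routine application of multiplicativity.

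First I would establish the conjugation formula
\[
\Gamma_i \, t_k \, \Gamma_i^{-1} = t_{k+i-1} \qquad (k=1,2,3).
\]
This is immediate from Lemma~\ref{lemmainductive}(1): writing both $t_k = (g_1\cdots g_{n-1})^{k-1} t_1 (g_1\cdots g_{n-1})^{-(k-1)}$ and $t_{k+i-1} = (g_1\cdots g_{n-1})^{k+i-2} t_1 (g_1\cdots g_{n-1})^{-(k+i-2)}$, the equality drops out after composing the first expression with $\Gamma_i$ on either side. The hypothesis $i\leq n-2$ ensures that $k+i-1 \leq n$ for $k\leq 3$, so all the $t_j$'s in question are legitimate generators. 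Analogously, Lemma~\ref{lemmainductive}(2) gives $\Gamma_i \, g_1 \, \Gamma_i^{-1} = g_i$ directly, while the identity $\Gamma_i \, g_2 \, \Gamma_i^{-1} = g_{i+1}$ follows by first expressing $g_2 = (g_1\cdots g_{n-1})\, g_1 \,(g_1\cdots g_{n-1})^{-1}$ and then conjugating by $\Gamma_i$.

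With these three basic conjugation rules in place, each of (1)--(6) is obtained by applying the inner automorphism $x\mapsto \Gamma_i \, x\, \Gamma_i^{-1}$ to the expression on the right--hand side and distributing it across the product of generators. For example, for the most involved case (6) one computes
\[
\Gamma_i \bigl(t_1^a t_2^b t_3^c g_1 g_2 g_1\bigr) \Gamma_i^{-1}
= (\Gamma_i t_1 \Gamma_i^{-1})^a (\Gamma_i t_2 \Gamma_i^{-1})^b (\Gamma_i t_3 \Gamma_i^{-1})^c (\Gamma_i g_1 \Gamma_i^{-1})(\Gamma_i g_2 \Gamma_i^{-1})(\Gamma_i g_1 \Gamma_i^{-1}),
\]
which upon substitution collapses to $t_i^a t_{i+1}^b t_{i+2}^c g_i g_{i+1} g_i$. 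The remaining cases (1)--(5) are handled by the very same recipe, with shorter products of generators.

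I do not anticipate a serious obstacle here: once the three conjugation identities for $t_1, t_2, t_3, g_1, g_2$ under $\Gamma_i$ are in hand, the lemma is a purely formal consequence of multiplicativity. The only point requiring minor care is the verification that $t_{k+i-1}$ and $g_{i+1}$ are indeed valid generators of ${\rm Y}_{d,n}(u)$, which is guaranteed by the range hypothesis $i\leq n-2$ in the statement.
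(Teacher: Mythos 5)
Your proposal is correct and is essentially the paper's own argument: the paper also reduces everything to Lemma~\ref{lemmainductive} by inserting $(g_1\cdots g_{n-1})^{-(i-1)}(g_1\cdots g_{n-1})^{i-1}$ between consecutive factors and regrouping, which is exactly your ``conjugation by $\Gamma_i$ is an inner automorphism, so distribute it over the product'' step written out explicitly. Your packaging via the three conjugation rules for $t_1,t_2,t_3,g_1,g_2$ is a slightly cleaner presentation of the same computation, and your index checks ($i\leq n-2$ ensuring $t_{i+2}$ and $g_{i+1}$ exist) are the right ones.
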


\begin{proof}
Statement $(1)$ is proved by application of Eqs.~\ref{YH5}--\ref{YH7} and induction on $j$. The proof of the second statement is standard in the literature; it follows from the braid relations \eqref{YH1} and \eqref{YH2} and induction on $i$. The other statements of the Lemma are proved by repeated applications of statements $(1)$ and $(2)$.
\end{proof}

\begin{lemma}\label{prop2} 
The defining ideal of ${\rm FTL}_{d,n}(u)$  is generated   by any  single element $r_{i,i+1}$.
\end{lemma} 
\begin{proof}
It is enough to prove that $r_{i, i+1}= (g_1 \ldots g_{n-1})^{(i-1)}r_{1,2} (g_1 \ldots g_{n-1})^{-(i-1)}$. Indeed, expanding $r_{1,2}$ in the right-hand side of the equality, we have:
\begin{align}
(g_1 \ldots g_{n-1})^{i-1}r_{1,2} (g_1\ldots g_{n-1})^{-(i-1)}
  &=  
\sum_{\substack{a, b =0\\ w \in S_3 }}^{d-1} (g_1 \ldots g_{n-1})^{i-1}\, t_1^a t_2^{b-a} t_{3}^{-b}\, g_w \,(g_1 \ldots g_{n-1})^{-(i-1)} \nonumber \\
&= \sum_{a, b =0}^{d-1} (g_1 \ldots g_{n-1})^{i-1}\, t_1^a t_2^{b-a} t_{3}^{-b}\, \left ( \sum_{w \in S_3} g_w \right ) \,(g_1 \ldots g_{n-1})^{-(i-1)} \nonumber \\
&= r_{i,i+1}, \label{r12principal}\nonumber
\end{align}
Therefore the proof is concluded.
\end{proof}
The following is an immediate corollary of Lemma~\ref{prop2}.
\begin{corollary}\label{ftlpr}
${\rm FTL}_{d,n}(u)$ is the  algebra   generated by $ t_1,\ldots , t_n , g_1 , \ldots , g_{n-1}$ which are subject to the defining relations of ${\rm Y}_{d,n}(u)$ and the relation:
\begin{equation}\label{eiform}
r_{1,2}=0.
\end{equation}
\end{corollary}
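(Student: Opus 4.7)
The corollary is essentially a direct translation of Theorem~\ref{prop2} into a presentation. My plan is simply to read off the presentation from the characterization of the defining ideal as a principal ideal.

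First, I would recall that by Definition~\ref{ftldefine}, the algebra ${\rm FTL}_{d,n}(u)$ is defined as the quotient $ {\rm Y}_{d,n}(u) / J $, where $J$ is the two--sided ideal generated by the family $\{r_{i,i+1} : i = 1, \ldots, n-2\}$. Since ${\rm Y}_{d,n}(u)$ is itself presented by the generators $t_1, \ldots, t_n, g_1, \ldots, g_{n-1}$ subject to relations \eqref{YH1}--\eqref{quadratic}, the quotient ${\rm Y}_{d,n}(u)/J$ admits the presentation on those same generators with the relations \eqref{YH1}--\eqref{quadratic} plus the additional relations $r_{i,i+1}=0$ for $i=1,\ldots,n-2$.

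Next, I would invoke Theorem~\ref{prop2}, which provides the explicit conjugation identity
\[
r_{i,i+1} = (g_1 \ldots g_{n-1})^{i-1}\, r_{1,2}\, (g_1 \ldots g_{n-1})^{-(i-1)}
\]
established in \eqref{r12principal}. Since each $r_{i,i+1}$ is a conjugate of $r_{1,2}$ by a unit of ${\rm Y}_{d,n}(u)$, every $r_{i,i+1}$ lies in the two--sided ideal generated by $r_{1,2}$, and conversely $r_{1,2}$ clearly lies in $J$. Hence $J$ coincides with the principal two--sided ideal generated by $r_{1,2}$.

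Combining these two observations, the relations $r_{i,i+1}=0$ for $i=2, \ldots, n-2$ are consequences of the single relation $r_{1,2}=0$ together with the defining relations of ${\rm Y}_{d,n}(u)$, so they may be omitted from the presentation. This yields exactly the presentation stated in the corollary: the generators $t_1,\ldots,t_n, g_1,\ldots,g_{n-1}$ subject to the defining relations of ${\rm Y}_{d,n}(u)$ together with the single additional relation \eqref{eiform}. There is no genuine obstacle here, as the only nontrivial content is the conjugation identity already settled in Theorem~\ref{prop2}; the passage to the presentation is a formal consequence.
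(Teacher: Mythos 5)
Your proposal is correct and follows exactly the paper's (implicit) argument: the corollary is deduced directly from Theorem~\ref{prop2}, since the conjugation identity \eqref{r12principal} shows the defining ideal is the principal two--sided ideal generated by $r_{1,2}$, and adding a single generator of a principal ideal as a relation to the presentation of ${\rm Y}_{d,n}(u)$ is a formal step. No gaps.
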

Further, an analogous result  (with analogous proofs) holds for the algebra ${\rm CTL}_{d,n}(u)$. So we have the following:
\begin{corollary}\label{ctlpr}
The  defining ideal of ${\rm CTL}_{d,n}(u)$ is generated by any single element $c_{i,i+1}$. Hence ${\rm CTL}_{d,n}(u)$ 
can be presented by  $ t_1,\ldots , t_n , g_1 , \ldots , g_{n-1}$ together with the defining relations of ${\rm Y}_{d,n}(u)$ and the relation:
\begin{equation}\label{eiformC}
c_{1,2}=0.
\end{equation}
\end{corollary}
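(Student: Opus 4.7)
The plan is to mirror the argument used in the proof of Theorem~\ref{prop2}. What I aim to establish is the identity
\[
c_{i,i+1} \;=\; (g_1\ldots g_{n-1})^{i-1}\, c_{1,2}\, (g_1\ldots g_{n-1})^{-(i-1)},
\]
since, as the conjugating elements are invertible in ${\rm Y}_{d,n}(u)$, this shows that the two--sided ideal generated by $c_{1,2}$ alone contains every $c_{i,i+1}$, and therefore coincides with the full defining ideal of ${\rm CTL}_{d,n}(u)$.

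First I would expand $c_{1,2}$ using the semi--direct decomposition $C_{1,2} = \langle t_1, t_2, t_3\rangle \rtimes \langle s_1, s_2\rangle$ together with the multiplication rule \eqref{rule}, obtaining
\[
c_{1,2} \;=\; \sum_{a,b,c=0}^{d-1}\ \sum_{w \in S_3}\, t_1^a t_2^b t_3^c\, g_w.
\]
In contrast to the expansion of $r_{1,2}$ encountered in the proof of Theorem~\ref{prop2}, no linear constraint is imposed on the triple $(a,b,c)$, so the sum ranges freely over $(\mathbb{Z}/d\mathbb{Z})^3 \times S_3$.

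Next I would conjugate each summand by $(g_1\ldots g_{n-1})^{i-1}$. Since $|S_3|=6$, the six cases of Lemma~\ref{lemmaforprop} cover all possible factors $g_w$ that can occur, and each case states precisely that $t_1^a t_2^b t_3^c g_w$ is carried to $t_i^a t_{i+1}^b t_{i+2}^c g_{\widetilde w}$, where $\widetilde w$ is the corresponding reduced word on the generators $s_i, s_{i+1}$. Summing these six identities over all exponent triples $(a,b,c)$ reassembles the right--hand side into exactly
\[
\sum_{a,b,c=0}^{d-1}\ \sum_{\widetilde w \in \langle s_i, s_{i+1}\rangle}\, t_i^a t_{i+1}^b t_{i+2}^c\, g_{\widetilde w} \;=\; c_{i,i+1},
\]
which is the desired identity.

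The second assertion of the corollary is then a formal consequence: adjoining the single relation $c_{1,2}=0$ to the defining relations of ${\rm Y}_{d,n}(u)$ forces, via the displayed conjugation identity, every $c_{i,i+1}$ to vanish, so the resulting quotient algebra coincides with ${\rm CTL}_{d,n}(u)$. There is no substantive obstacle in this argument: it is a term--by--term transcription of the proof of Theorem~\ref{prop2}, and the only real computational content is already packaged in Lemma~\ref{lemmaforprop}.
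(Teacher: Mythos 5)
Your proposal is correct, and it establishes exactly the identity the paper needs, namely $c_{i,i+1}=\gamma\, c_{1,2}\,\gamma^{-1}$ with $\gamma=(g_1\ldots g_{n-1})^{i-1}$; the only difference lies in how that identity is derived. You expand $c_{1,2}$ from scratch as $\sum_{a,b,c}\sum_{w\in S_3}t_1^at_2^bt_3^cg_w$ (correctly noting that, unlike for $r_{1,2}$, no constraint $a+b+c\equiv 0$ is imposed) and conjugate term by term via the six cases of Lemma~\ref{lemmaforprop} — a literal rerun of the proof of Theorem~\ref{prop2}. The paper instead exploits the factorization $c_{1,2}=\bigl(\sum_{k=0}^{d-1}t_1^{k}\bigr)r_{1,2}$ of Eq.~\ref{cij=}: conjugating, it reduces the claim to $\gamma t_1\gamma^{-1}=t_i$ (part (1) of Lemma~\ref{lemmainductive}) together with the already-proved $\gamma r_{1,2}\gamma^{-1}=r_{i,i+1}$ of Eq.~\ref{r12principal}, which is why the statement is a corollary rather than a theorem with its own computation. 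Your route is self-contained but duplicates work; the paper's route is shorter because it recycles the $r_{1,2}$ result. Both are sound, and your handling of the second assertion (that the presentation follows formally once the ideal is shown to be principal) matches the paper's.
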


\subsection{{\it Presentations with non-invertible generators}}
By using the analogous transformation to Eq. \ref{transfor}, we obtain presentations for ${\rm FTL}_{d,n}(u)$ and  ${\rm CTL}_{d,n}(u)$ through non-invertible generators. More precisely, 
set 
\begin{equation}\label{elltog}
\ell_i := \frac{1}{u+1}(g_i+1).
\end{equation}

\begin{proposition}\label{presFTL}
The algebra ${\rm FTL}_{d,n}(u)$ can be presented with generators $ \ell_1,  \ldots , \ell_{n-1}, t_1 , \ldots , t_n$, subject to the following relations:
\begin{align}
\ell_i\ell_j &=\ell_j\ell_i,  \quad \text{for} \quad |i-j| >1\\
\ell_i\ell_{i+1}\ell_i - \frac{(u-1)e_i +1}{(u+1)^2}\,\ell_i &= \ell_{i+ 1}\ell_{i}\ell_{i + 1} - \frac{(u-1)e_{i+1} +1}{(u+1)^2}\,\ell_{i+ 1}   \label{noninveq1}\\
t_i^d &=1, \hspace{.3cm} t_it_j=t_jt_i \\ 
\ell_it_i &= t_{i+1}\ell_i + \frac{1}{u+1}(t_i - t_{i+1}) \\
\ell_i t_{i+1}&= t_i\ell_i +\frac{1}{u+1}(t_{i+1}-t_i) \\
\ell_i t_j& = t_j \ell_i, \quad \text{for} \quad |i-j| >1\\
\ell_i^2 &= \frac{(u-1)e_i+2}{u+1}\,\ell_i, \label{ellquad} \\
e_i e_{i +1}\ell_i\ell_{i +1}\ell_i &=\frac{u}{(u+1)^2}\, e_ie_{i+1}\ell_i \label{noninvrel} .
\end{align}
\end{proposition}

\begin{proof}

It is a straightforward computation to see that relations (\ref{YH1}) -- (\ref{quadratic}) 
are transformed via Eq.~\ref{elltog} into the relations \eqref{noninveq1} -- (\ref{noninvrel}). We will prove here some indicative cases. The rest are proved in  an analogous way.  First we will prove the quadratic relation \eqref{ellquad}. From Eq.~\ref{elltog} we have that:
\[
g_i^2 = \left ((u+1) \ell_i -1  \right )^2,
\]
using Eq.~\ref{quadratic}, this is equivalent to:
\[
1+ (u-1)e_i + (u-1)e_ig_i = (u+1)^2 \ell_i^2 - 2 (u+1) \ell_i +1,
\]
or, via Eq.~\ref{elltog}, equivalently:
\[ 
(u-1)(u+1)e_i \ell_i  = (u+1)^2 \ell_i^2 - 2(u+1)\ell_i,
\]
which leads to :
\[ 
\ell_i^2 = \frac{(u-1) e_i + 2}{u+1} \ell_i.
\]

\vspace{.3cm}
Next we will prove Eq.~\ref{noninveq1}. From Eq.~\ref{elltog} we obtain:
 \begin{align}
 g_ig_{i+1}g_i &= (u+1)\ell_i \ell_{i+1} \ell_i  - (u+1)^2 \ell_i^2 - (u+1)^2 \ell_{i+1} \ell_i + (u+1)\ell_i \nonumber \\
 &\, - (u+1)^2\ell_i \ell_{i+1} + (u+1) \ell_i + (u+1)\ell_{i+1} -1 \label{gigjgi}.\\
 g_{i+1}g_{i}g_{i+1} &= (u+1)\ell_{i+1} \ell_{i} \ell_{i+1}  - (u+1)^2 \ell_{i+1}^2 - (u+1)^2 \ell_{i} \ell_{i+1} + (u+1)\ell_{i+1} \nonumber \\
 &\, - (u+1)^2\ell_{i+1} \ell_{i} + (u+1) \ell_{i+1} + (u+1)\ell_{i} -1. \label{gjgigj}
 \end{align}
Equations \ref{YH2}, \ref{gigjgi}, \ref{gjgigj} and \ref{ellquad} lead us to the desired result:
\[
 \ell_i \ell_{i+1} \ell_i - \frac{(u-1) e_i +1}{(u+1)^2}\, \ell_i = \ell_{i+1} \ell_i \ell_{i+1} - \frac{(u-1)e_{i+1} +1}{(u+1)^2} \,\ell_{i+1}, \quad 1 \leq i \leq n-2.
 \]
\vspace{.3cm}
 Finally, from relations $e_i e_{i+1} g_{i,i+1}=0$ using Eqs.~\ref{ytldefid} and \ref{elltog} we have  for $1 \leq i \leq n-2$ that:
\begin{align*}
0= e_ie_{i+1}g_{i, i + 1}&=e_i e_{i+1} \left (g_{i}g_{i + 1}g_i + g_{i + 1}g_i + g_{i} g_{i + 1} + g_{i +1} + g_i +1 \right)\\
& = e_ie_{i+1} \left ((u+1)^3 \ell_i \ell_{i +1} \ell_i - (u+1)^2 \ell_i^2 + (u+1) \ell_i \right ).
\end{align*}
From Eq.~\ref{ellquad} we have that:
\[
 e_ie_{i+1} \left ( (u+1)^2\ell_i \ell_{i +1} \ell_i \right) = e_i e_{i+1}\big((u-1) e_i +1 \big) \ell_i ,
\]
or equivalently:
\[
e_i e_{i+1} \ell_i \ell_{i +1} \ell_i =\frac{u}{(u+1)^2}\, e_ie_{i+1} \ell_i,
\]
which is Eq.~\ref{noninvrel}.
\end{proof}

\begin{proposition}\label{ctlprop}
The algebra ${\rm CTL}_{d,n}(u)$ can be presented with generators $
 \ell_1,  \ldots , \ell_{n-1}$, $t_1 , \ldots , t_n$, subject to the following relations:
\begin{align*}
\ell_i\ell_j &=\ell_j\ell_i,  \quad \text{for} \quad |i-j| >1\\
\ell_i\ell_{i+1}\ell_i - \frac{(u-1)e_i +1}{(u+1)^2}\,\ell_i &= \ell_{i+ 1}\ell_{i}\ell_{i + 1} - \frac{(u-1)e_{i+1} +1}{(u+1)^2}\,\ell_{i+ 1}   \\
t_i^d &=1, \hspace{.3cm} t_it_j=t_jt_i \\ 
\ell_it_i &= t_{i+1}\ell_i + \frac{1}{u+1}(t_i - t_{i+1}) \\
\ell_i t_{i+1}&= t_i\ell_i +\frac{1}{u+1}(t_{i+1}-t_i) \\
\ell_i t_j& = t_j \ell_i, \quad \text{for} \quad |i-j| >1\\
\ell_i^2 &= \frac{(u-1)e_i+2}{u+1}\,\ell_i  \\
\sum_{k=0}^{d-1}e_i^{(k)} e_{i +1}\ell_i\ell_{i +1}\ell_i &= \sum_{k=0}^{d-1}e_i^{(k)} e_{i+1}\frac{u}{(u+1)^2}\,\ell_i .
\end{align*}
\end{proposition}

\begin{proof}
The proof is a straightforward computation and totally analogous to the proof of Proposition~\ref{presFTL}.
\end{proof}

\begin{remark}\rm
We know that a linear basis of the Temperley--Lieb  algebra can be constructed from the interpretation of  the generators  $\ell_i$ as diagrams. In virtue of Remark \ref{collapse},  then  it is desirable to construct  a basis of ${\rm FTL}_{d,n}(u)$ from the presentation given in Proposition \ref{presFTL}. Unfortunately, we do not have a diagrammatic interpretation for the generators $\ell_i$ yet. 
In a recent result \cite{ChPou} Chlouveraki and Pouchin studied extensively the representation theories of the algebras ${\rm FTL}_{d,n}(u)$ and ${\rm CTL}_{d,n}(u)$. Further, they provided linear bases for both  and they  also computed their dimensions. We will present here the dimensions of both of the algebras ${\rm FTL}_{d,n}(u)$ and ${\rm CTL}_{d,n}(u)$. For this purpose, let ${\rm Comp}_d (n) : =\left \{ \mu= (\mu_1,\, \mu_2,\,  \ldots , \mu_d ) \in \mathbb{N}^{d} \, \vert \, \mu_1 + \mu_2 + \ldots + \mu_d = n \right \} $ and let also $c_k:= \frac{1}{k+1} {2k \choose k}$ be the $k$-th Catalan number. We then have:
\end{remark}

\begin{theorem}[{\cite[Theorems~3.10, 5.5 and Remark 5.6]{ChPou}}]
The dimension of the quotient algebra ${\rm FTL}_{d,n}(u)$ is:
\begin{equation}\label{dimftl}
{\rm dim}_{\mathbb{C}(u)}{\rm FTL}_{d,n}(u) = \sum_{\mu \, \in \, {\rm Comp}_d (n)} \left ( \frac{n!}{\mu_1! \, \mu_2 ! \, \ldots \mu_d !} \right )^2 c_{\mu_1}\, c_{\mu_2}\ \ldots \, c_{\mu_d}.  
\end{equation} 
The dimension of the quotient algebra ${\rm CTL}_{d,n}(u)$ is:
\[
{\rm dim}_{\mathbb{C}(u)}{\rm CTL}_{d,n}(u) = \sum_{\mu\, \in \, {\rm Comp}_d(n)} \left( \frac{n !}{\mu_1 ! \, \mu_2 ! \, \ldots \, \mu_d !} \right)^2 c_{\mu_1}\, \mu_2 ! \, \ldots \, \mu_d! .
\]
\end{theorem}

\subsection{{\it Technical lemmas}}
We finish this section with two technical  lemmas concerning the interaction with the braiding generators $g_1$, $g_2$ of the generators $g_{1,2}$, $r_{1,2}$, $c_{1,2}$ of the three ideals discussed above. Also, these lemmas will be used in the proof of  Theorems~\ref{akthmgen} and \ref{ctlthm}.

\begin{lemma}\label{condeqs}
For the element $g_{1,2}$ we have in ${\rm Y}_{d,n}(u)$ the following:
\[
\begin{array}{crcl}
{\it (1)} &g_1 g_{1,2}& =& [1+ (u-1) e_1]g_{1,2}\\
{\it (2)} &g_2g_{1,2} &= &[1 +(u-1)e_2]g_{1,2}\\
{\it (3)}&g_1 g_2 g_{1,2} &=& [ 1 +(u-1) e_1+(u-1) e_{1,3}+(u-1)^2 e_1e_2 ]g_{1,2}\\
{\it (4)} &g_2g_1g_{1,2} &=& [ 1 +(u-1)\, e_2+(u-1) e_{1,3}+(u-1)^2e_1e_2 ] g_{1,2}\\
{\it (5)}&g_1g_2g_1g_{1,2} &=& [ 1+ (u-1)(e_1+e_2+e_{1,3})+(u-1)^2(u+2)\,e_1e_2 ] g_{1,2}.
\end{array}
\]
\end{lemma}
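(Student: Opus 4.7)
The plan is to prove the five identities in a cascading fashion: parts (1) and (2) by direct expansion using the quadratic relation, then (3) and (4) by multiplying (2) and (1) respectively by a single braid generator on the left, and finally (5) by multiplying (4) by $g_1$.

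For (1), I would expand
\[
g_1 g_{1,2} = g_1 + g_1^2 + g_1 g_2 + g_1^2 g_2 + g_1 g_2 g_1 + g_1^2 g_2 g_1,
\]
then replace every occurrence of $g_1^2$ by $1 + (u-1)e_i + (u-1)e_i g_i$ via the quadratic relation \eqref{quadratic}. After regrouping, the terms without $e_1$ reassemble to $g_{1,2}$ and the remaining terms factor as $(u-1)e_1(1 + g_1 + g_2 + g_1 g_2 + g_2 g_1 + g_1 g_2 g_1) = (u-1)e_1 g_{1,2}$. Part (2) is identical after exchanging the roles of $g_1$ and $g_2$ (and $e_1$ with $e_2$).

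For (3)--(5) I will repeatedly need the commutation identities that are already available (Lemma~\ref{eipropop}) or derivable by the same elementary calculation: $g_1 e_1 = e_1 g_1$, $g_2 e_2 = e_2 g_2$, $g_1 e_2 = e_{1,3} g_1$, $g_2 e_1 = e_{1,3} g_2$, $g_1 e_{1,3} = e_2 g_1$, and the absorption rules $e_1 e_{1,3} = e_2 e_{1,3} = e_1 e_2$ together with the idempotency $e_i^2 = e_i$. Then (3) follows by writing $g_1 g_2 g_{1,2} = g_1(g_2 g_{1,2}) = g_1[1+(u-1)e_2]g_{1,2}$, commuting $g_1 e_2 = e_{1,3} g_1$, and applying (1). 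Part (4) is symmetric: $g_2 g_1 g_{1,2} = g_2[1+(u-1)e_1]g_{1,2}$, commute $g_2 e_1 = e_{1,3} g_2$, and apply (2).

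The main bookkeeping lies in (5). I would write $g_1 g_2 g_1 g_{1,2} = g_1(g_2 g_1 g_{1,2})$ and substitute (4), producing four terms. Each is pushed past $g_1$ using the commutation identities above: in particular $g_1 e_{1,3} g_{1,2} = e_2 g_1 g_{1,2}$, $g_1 e_2 g_{1,2} = e_{1,3} g_1 g_{1,2}$, and $g_1 e_1 e_2 g_{1,2} = e_1 e_2 g_1 g_{1,2}$, after which (1) reduces every expression to a multiple of $g_{1,2}$. The delicate step will be collecting the coefficient of $(u-1)^2 e_1 e_2 g_{1,2}$: three distinct sources contribute ($e_{1,3}e_1$, $e_2 e_1$, and $e_1 e_2$, each yielding $e_1 e_2$ by the absorption/commutation rules), producing $3(u-1)^2$, to which the single $(u-1)^3 e_1 e_2 e_1 = (u-1)^3 e_1 e_2$ term adds $(u-1)^3$. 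Factoring gives $(u-1)^2[3+(u-1)] = (u-1)^2(u+2)$, matching the claimed coefficient, while the linear $(u-1)$ terms assemble into $(u-1)(e_1+e_2+e_{1,3})$. That combinatorial collection is the principal obstacle; the rest is routine manipulation.
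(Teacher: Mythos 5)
Your proposal is correct: I checked all five identities, including the delicate coefficient count in (5), where the three $(u-1)^2 e_1e_2$ contributions plus the $(u-1)^3 e_1e_2e_1=(u-1)^3e_1e_2$ term do assemble to $(u-1)^2(u+2)$, and the commutation and absorption rules you invoke ($g_1e_2=e_{1,3}g_1$, $g_1e_{1,3}=e_2g_1$, $e_{1,3}e_1=e_{1,3}e_2=e_1e_2$, $e_i^2=e_i$) are all valid consequences of the defining relations and Lemma~\ref{eipropop}. The paper itself gives no argument here (it only cites \cite[Lemma 5]{gojukola}), and your cascading direct computation is the standard proof one would expect to find there.
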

\begin{proof}
See \cite[Lemma 5]{gojukola}. Cf. \cite[Lemma 7.5 ]{juptl}.
\end{proof}

\begin{lemma}\label{condeqsftl}
For the element $r_{1,2}$ we have in ${\rm Y}_{d,n}(u)$:
\[
\begin{array}{crcl}
{\it (1)} &g_1 r_{1,2} &=& [1+(u-1)e_1]r_{1,2}\\
{\it (2)} & g_2 r_{1,2} &=& [1+(u-1)e_2]r_{1,2}\\
{\it (3)} &g_1g_2 r_{1,2} &= &[ 1 +(u-1) e_1+(u-1) e_{1,3}+(u-1)^2 e_1e_2 ]r_{1,2}\\
{\it (4)} &g_2g_1 r_{1,2} &=& [ 1 +(u-1)\, e_2+(u-1) e_{1,3}+(u-1)^2e_1e_2 ] r_{1,2}\\
{\it (5)} &g_1g_2 g_1r_{1,2} &=& [ 1+ (u-1)(e_1+e_2+e_{1,3})+(u-1)^2(u+2)\,e_1e_2 ] r_{1,2}.
\end{array}
\]
\end{lemma}

\begin{proof}
For proving this lemma we will make extensive use of Lemmas~\ref{condeqs} and \ref{eipropop}. For  statement $(1)$ we have:
\begin{align*}
g_1 r_{1,2} &= g_1 e_1 e_2 g_{1,2} = e_1 e_{1,3} g_1 g_{1,2}\\
&= e_1 e_2 [1 + (u-1) e_1 ] g_{1,2} \\
&= [1 + (u-1) e_1] e_1 e_2 g_{1,2}\\
&= [1+ (u-1) e_1] r_{1,2}.
\end{align*}
In an analogous way we prove statement  (2). For statement  (3) we have that:
\begin{align*}
g_1 g_2 r_{1,2} &= g_1 g_2 e_1 e_2 g_{1,2} = e_2e_{1,3} g_1g_2 g_{1,2}\\
&=e_1 e_2 [ 1 +(u-1) e_1+(u-1) e_{1,3}+(u-1)^2 e_1e_2 ]g_{1,2}\\
&= [ 1 +(u-1) e_1+(u-1) e_{1,3}+(u-1)^2 e_1e_2 ]e_1 e_2g_{1,2}\\
&= [ 1 +(u-1) e_1+(u-1) e_{1,3}+(u-1)^2 e_1e_2 ]r_{1,2}.
\end{align*}
In an analogous way we prove statement (4). Finally, we have for statement (5):
\begin{align*}
g_1 g_2 g_1 r_{1,2} &= g_1 g_2 g_1 e_1 e_2 g_{1,2} \\
&= e_1 e_2 g_1 g_2 g_1 g_{1,2} \\
&= e_1 e_2  [ 1+ (u-1)(e_1+e_2+e_{1,3})+(u-1)^2(u+2)\,e_1e_2 ] g_{1,2}\\
&= [1+ (u-1)(e_1+e_2+e_{1,3})+(u-1)^2(u+2)\,e_1e_2 ]e_1 e_2 g_{1,2}\\
&= [ 1+ (u-1)(e_1+e_2+e_{1,3})+(u-1)^2(u+2)\,e_1e_2 ] r_{1,2}.
\end{align*}
\end{proof}

\begin{lemma}\label{ctllemma}
For the element $c_{1,2}$ we have in ${\rm Y}_{d,n}(u)$:
\[
\begin{array}{crcl}
{\it (1)} &g_1 c_{1,2} &=& [1+(u-1)e_1]c_{1,2}\\
{\it (2)} &g_2 c_{1,2} &=& [1+(u-1)e_2]c_{1,2}\\
{\it (3)} &g_1g_2 c_{1,2} &=& [ 1 +(u-1) e_1+(u-1) e_{1,3}+(u-1)^2 e_1e_2 ]c_{1,2}\\
{\it (4)} & g_2g_1 c_{1,2} &=& [ 1 +(u-1)\, e_2+(u-1) e_{1,3}+(u-1)^2e_1e_2 ] c_{1,2}\\
{\it (5)} & g_1g_2 g_1c_{1,2} &=& [ 1+ (u-1)(e_1+e_2+e_{1,3})+(u-1)^2(u+2)\,e_1e_2 ] c_{1,2}.
\end{array}
\]
\end{lemma}

\begin{proof}
The proof is completely analogous to the proof of Lemma~\ref{condeqsftl}.
\end{proof}

\section{Markov traces}\label{ftlsection}

The main purpose of this section is to find the necessary and sufficient conditions in order that the  trace ${\rm tr}$ defined on  ${\rm Y}_{d,n}(u)$ \cite{ju}  passes to the quotient algebras ${\rm FTL}_{d,n}(u)$ and ${\rm CTL}_{d,n}(u)$. 
Since the defining ideal of  ${\rm FTL}_{d,n}(u)$ (respectively of ${\rm CTL}_{d,n}(u)$) is principal,  by the linearity of ${\rm tr}$, we have that $\rm tr$ passes to  ${\rm FTL}_{d,n}(u)$ (respectively to ${\rm CTL}_{d,n}(u)$) if and only if we have: 
\begin{equation}\label{trwg12}
{\rm tr}(\mathfrak{m}\, r_{1,2})=0 \quad (\text{respectively}\quad  {\rm tr}(\mathfrak{m}\, c_{1,2})=0 ),
\end{equation}
for all  monomials $\mathfrak{m}$ in the inductive basis of ${\rm Y}_{d,n}(u)$. So, we seek necessary and sufficient conditions for Eq.~\ref{trwg12} to hold. The strategy is to find such conditions first for $n=3$ and then to generalize using induction. 

\subsection{{\it Computations on ${\rm tr}$}}Recall that elements in the inductive basis of ${\rm Y}_{d,3}(u)$  are of the following forms:
\begin{equation}\label{basicwords}
t_1^{a}t_2^{b}t_3^c,  \quad t_1^{a}g_1t_1^{b}t_3^c, \quad t_1^{a}t_2^{b}g_2g_1t_1^c, \quad  t_1^{a}t_2^{b}g_2t_2^c,\quad t_1^{a}g_1t_1^{b} g_2t_2^c, \quad t_1^{a}g_1t_1^{b} g_2g_1t_1^c,
\end{equation}
where $0\leq a,b,c \leq d-1$ (see Proposition~\ref{inductiveYH}). 
We need now to compute the trace of the elements $\mathfrak{m}\, r_{1,2}$, where $\mathfrak{m}$ runs the monomials listed in \eqref{basicwords}. To do these  computations  we will use the following lemma and proposition.

\begin{lemma}\label{tracelemma}
For all $0\leq m\leq d-1$, we have:
\[
 {\rm tr}\left(e_1^{(m)} e_2 g_{1,2}\right )= (u+1)z^2x_{m} + (u+2)z \, E^{(m)}  +{\rm tr}(e_1^{(m)}e_2).
\]
\end{lemma}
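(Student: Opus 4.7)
The plan is to expand $g_{1,2}$ as the six-term sum $1 + g_1 + g_2 + g_1 g_2 + g_2 g_1 + g_1 g_2 g_1$ and compute each of the six traces ${\rm tr}(e_1^{(m)} e_2 g_w)$ separately. Throughout, I would write $e_1^{(m)} e_2 = \frac{1}{d^2}\sum_{r,s} t_1^{m+r} t_2^{d-r+s} t_3^{d-s}$ and push the framing generators past the $g_i$ using $g_i t_i = t_{i+1} g_i$ and $g_i t_{i+1} = t_i g_i$, so that the Markov rules ${\rm tr}(a g_n) = z\, {\rm tr}(a)$, ${\rm tr}(a t_{n+1}^s) = x_s\, {\rm tr}(a)$ and the commutativity of ${\rm tr}$ do the work.

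The identity term contributes exactly ${\rm tr}(e_1^{(m)} e_2)$. For each length-one term, the framing-permutation identities reduce the computation to the telescoping sum $\sum_s x_{m+s} x_{d-s} = d\, E^{(m)}$, giving $z E^{(m)}$ in both cases. For each length-two term, two applications of Markov (first on $g_2$, then on $g_1$) together with the congruence $a+b \equiv m \pmod{d}$ on the shifted exponents collapse everything to $z^2 x_m$, again twice.

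The crux is the length-three term ${\rm tr}(e_1^{(m)} e_2 g_1 g_2 g_1)$. Here I would first use trace commutativity and the framing-permutation relations to bring the generic summand into the shape ${\rm tr}(t_1^b t_2^a g_1^2 g_2)$ with $a = m+r-s$, $b = d-r+s$ (so $a+b \equiv m \pmod{d}$), and then substitute the quadratic relation $g_1^2 = 1 + (u-1) e_1 + (u-1) e_1 g_1$. This splits the trace into three pieces, which evaluate to $z\, x_a x_b$, $(u-1) z\, E^{(a+b)}$ and $(u-1) z^2 x_{a+b}$ respectively; the last one needs the auxiliary computation ${\rm tr}(e_1^{(a+b)} g_1) = z\, x_{a+b}$, proved by one more Markov reduction. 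Summing over $r, s$, the first piece collapses to $z E^{(m)}$ via $\sum_{r,s} x_{m+r-s} x_{d-r+s} = d^2 E^{(m)}$, while the other two pieces are already independent of $r,s$, producing in total $u z E^{(m)} + (u-1) z^2 x_m$.

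Adding the six contributions, namely ${\rm tr}(e_1^{(m)} e_2)$, $zE^{(m)}$, $zE^{(m)}$, $z^2 x_m$, $z^2 x_m$ and $uz E^{(m)} + (u-1) z^2 x_m$, yields $(u+1) z^2 x_m + (u+2) z E^{(m)} + {\rm tr}(e_1^{(m)} e_2)$, as required. The main obstacle is the length-three term, whose quadratic-relation bookkeeping combined with the index shifts in the double sum demands some care; everything else is mechanical Markov and trace-commutativity manipulation.
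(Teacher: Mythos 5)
Your proposal is correct and follows essentially the same route as the paper: expand $g_{1,2}$ into the six braid words, write $e_1^{(m)}e_2$ as an explicit double sum of framing monomials, and reduce each trace with the framing-permutation relations, cyclicity and the Markov rules. The only difference is that you spell out the $g_1g_2g_1$ term (cyclic shift, quadratic relation on $g_1^2$, auxiliary computation ${\rm tr}(e_1^{(a+b)}g_1)=zx_{a+b}$), a detail the paper's ``direct computation'' leaves implicit; your intermediate contributions $2zE^{(m)}+2z^2x_m+uzE^{(m)}+(u-1)z^2x_m+{\rm tr}(e_1^{(m)}e_2)$ match the paper's.
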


\begin{proof}
By direct computation we have:
\begin{align*}
&{\rm tr}\left(e_1^{(m)} e_2 g_{1,2}\right) =   {\rm tr}\left(e_1^{(m)} e_2 g_1\right ) + {\rm tr}\left(e_1^{(m)} e_2 g_2 \right) +{\rm tr}\left(e_1^{(m)} e_2 g_1 g_2\right)  \\
&\quad  + {\rm tr}\left(e_1^{(m)} e_2 g_2 g_1\right) + {\rm tr}\left(e_1^{(m)} e_2 g_1 g_2 g_1\right)+  {\rm tr}\left(e_1^{(m)} e_2\right)\\
&= \quad \frac{1}{d^2}\sum_{s=0}^{d-1}\sum_{k=0}^{d-1} {\rm tr}(t_1^{m+s} t_2^{-s+k} t_3^{-k} g_1) + \frac{1}{d^2}\sum_{s=0}^{d-1}\sum_{k=0}^{d-1} {\rm tr}(t_1^{m+s} t_2^{-s+k} t_3^{-k} g_2)\\
& \quad +\frac{1}{d^2}\sum_{s=0}^{d-1}\sum_{k=0}^{d-1} {\rm tr}(t_1^{m+s} t_2^{-s+k} t_3^{-k} g_1 g_2) + \frac{1}{d^2}\sum_{s=0}^{d-1}\sum_{k=0}^{d-1} {\rm tr}(t_1^{m+s} t_2^{-s+k} t_3^{-k} g_2 g_1 ) \\
& \quad + \frac{1}{d^2}\sum_{s=0}^{d-1}\sum_{k=0}^{d-1} {\rm tr}(t_1^{m+s} t_2^{-s+k} t_3^{-k} g_1 g_2 g_1) + {\rm tr}\left(e_1^{(m)} e_2\right)\\
&= 2z E^{(m)} + 2z^2 x_{m}+ +zE^{(m)} +  (u-1) z E^{(m)} + (u-1)z^2 x_{m} \\
&= (u+1) z^2 x_{m} + (u+2) z E^{(m)} + {\rm tr}\left(e_1^{(m)} e_2\right).
\end{align*}
\end{proof}

\begin{proposition}\label{fourcaseslemma}
For all $0\leq a, b, c\leq d-1$, we have:
\begin{enumerate}
\item If $\mathfrak{m}=t_1^{a}t_2^{b}t_3^c$, 
\[
{\rm tr}(\mathfrak{m}r_{1,2}) =(u+1)z^2 x_{a+b+c} + (u+2) E^{(a+b+c)} z + {\rm tr}(e_1^{(a+b+c)}e_2)
\]
\item If $\mathfrak{m}= t_1^{a}g_1t_1^{b}t_3^c$ and $\mathfrak{m}=   t_1^{a}t_2^{b}g_2t_2^c$,  
\[
{\rm tr}(\mathfrak{m}r_{1,2}) = u\left [(u+1)z^2 x_{a+b+c} + (u+2) E^{(a+b+c)} z + {\rm tr}(e_1^{(a+b+c)}e_2)\right]
\]
\item If $\mathfrak{m}= t_1^{a}t_2^{b}g_2g_1t_1^c$ and $\mathfrak{m}= t_1^{a}g_1t_1^{b} g_2t_2^c$, 
\[
{\rm tr}(\mathfrak{m}r_{1,2}) =u^2\left[(u+1)z^2 x_{a+b+c} + (u+2) E^{(a+b+c)} z + {\rm tr}(e_1^{(a+b+c)}e_2)\right]
\]
\item If $\mathfrak{m}=  t_1^{a}g_1t_1^{b} g_2g_1t_1^c$, 
\[
{\rm tr}(\mathfrak{m}r_{1,2}) = u^3\left[(u+1)z^2 x_{a+b+c} + (u+2) E^{(a+b+c)} z + {\rm tr}(e_1^{(a+b+c)}e_2)\right].
\]
\end{enumerate}
\end{proposition}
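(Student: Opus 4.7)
The proposal is to exploit the factorization $r_{1,2}=e_1e_2\,g_{1,2}$ coming from Eq.~\ref{rij=}, together with three absorption identities that make the $e$-factors in Lemma~\ref{condeqsftl} collapse when multiplied by $r_{1,2}$. Concretely, since $e_1,e_2,e_{1,3}$ all lie in the commutative subalgebra generated by the $t_i$'s and are idempotent, and since Lemma~\ref{eipropop} gives $e_1e_{1,3}=e_1e_2=e_{1,3}e_2$, one checks directly that
\begin{equation*}
e_1\,r_{1,2}=e_2\,r_{1,2}=e_{1,3}\,r_{1,2}=e_1e_2\,r_{1,2}=r_{1,2}.
\end{equation*}
Thus every right multiplier appearing on the right--hand side of Lemma~\ref{condeqsftl}(1)--(5) becomes a scalar in $u$ times $r_{1,2}$.

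Next I would normalize each of the six monomials listed in \eqref{basicwords}: using $g_it_i=t_{i+1}g_i$, $g_it_{i+1}=t_ig_i$ and $g_it_j=t_jg_i$ for $j\neq i,i+1$ (relations \eqref{YH5}--\eqref{YH7}), one shuttles all of the framing generators to the left, rewriting $\mathfrak{m}$ in the form $t_1^{\alpha}t_2^{\beta}t_3^{\gamma}\,G$, where $G\in\{1,g_1,g_2,g_1g_2,g_2g_1,g_1g_2g_1\}$ and $\alpha+\beta+\gamma=a+b+c\pmod d$ (the exponent sum is preserved because each relation merely relocates a $t$ among indices $1,2,3$). Multiplying by $r_{1,2}$ and applying the corresponding line of Lemma~\ref{condeqsftl}, followed by the absorption identities above, gives
\begin{equation*}
\mathfrak{m}\,r_{1,2}=\lambda(G)\,t_1^{\alpha}t_2^{\beta}t_3^{\gamma}\,r_{1,2},
\end{equation*}
where $\lambda(G)$ is the sum of the coefficients that multiply the $e$-terms in Lemma~\ref{condeqsftl}. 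Using $t_1^{\alpha}t_2^{\beta}t_3^{\gamma}e_1e_2=e_1^{(\alpha+\beta+\gamma)}e_2=e_1^{(a+b+c)}e_2$ from Eq.~\ref{eimei+1=}, one arrives at
\begin{equation*}
\mathfrak{m}\,r_{1,2}=\lambda(G)\,e_1^{(a+b+c)}e_2\,g_{1,2}.
\end{equation*}

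The final step is to verify that $\lambda(G)$ takes the expected powers of $u$: namely $\lambda(1)=1$, $\lambda(g_i)=1+(u-1)=u$, $\lambda(g_ig_j)=1+2(u-1)+(u-1)^2=u^2$, and $\lambda(g_1g_2g_1)=1+3(u-1)+(u-1)^2(u+2)=u^3$. Applying ${\rm tr}$ and invoking Lemma~\ref{tracelemma} then yields the four claimed formulas.

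The main obstacle is really just bookkeeping: making sure the absorption identities $e_j r_{1,2}=r_{1,2}$ (in particular for $e_{1,3}$) are justified cleanly from Lemma~\ref{eipropop}, and that the move-past of the $t$'s preserves the total exponent $a+b+c$ in each of the six cases. Once those two points are in place, the six computations reduce to a single application of Lemma~\ref{condeqsftl} and a purely numerical check of $\lambda(G)$, after which Lemma~\ref{tracelemma} finishes the proof uniformly.
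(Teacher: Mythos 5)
Your proposal is correct and follows essentially the same route as the paper: it rewrites each monomial as $t_1^{\alpha}t_2^{\beta}t_3^{\gamma}G$, applies Lemma~\ref{condeqsftl}, absorbs the idempotents into $r_{1,2}=e_1e_2g_{1,2}$ to obtain the scalar $u^{\ell(G)}$, uses Eq.~\ref{eimei+1=} to produce $e_1^{(a+b+c)}e_2$, and finishes with Lemma~\ref{tracelemma}. The paper only writes out two representative cases and declares the rest analogous, whereas you make the uniform bookkeeping (the absorption identities and the coefficient sums $\lambda(G)$) explicit, but the underlying argument is the same.
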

\begin{proof}
We will prove claim (1). According to Eq. \ref{rij=} we have: 
$
\mathfrak{m}r_{1,2} = t_1^{a}t_2^{b}t_3^cr_{1,2}= t_1^{a}t_2^{b}t_3^ce_1e_2g_{1,2}
$. But $t_1^{a}t_2^{b}t_3^ce_1e_2= e_1^{(a+b+c)}e_2$, hence:
\[
\mathfrak{m}r_{1,2} = e_1^{(a+b+c)}e_2g_{1,2}.
\]
Thus, claim (1) follows by applying Lemma \ref{tracelemma}.

For proving the rest of the claims we use Lemmas \ref{condeqsftl} and \ref{tracelemma} and we follow the same argument, so we finish the proof  of the proposition by proving only one representative case.  We shall prove claim (3)  for $\mathfrak{m}= t_1^a g_1 t_1^b g_2 t_2^c$. This monomial can be rewritten as  $t_1^at_2^b t_3^c g_1 g_2$. Now, by using Lemma \ref{condeqsftl} on $g_1 g_2 r_{1,2}$, we obtain:
\[
\mathfrak{m} r_{1,2} = t_1^at_2^b t_3^c g_1 g_2 r_{1,2} =  t_1^at_2^b t_3^c\left[1 + (u-1)e_1 + (u-1)e_{1,3} + (u-1)^2e_1e_2 \right] r_{1,2},
\]
then using now Eq. \ref{rij=} and the fact the $e_i$'s are idempotents, it follows that:
\begin{align*}
\mathfrak{m} r_{1,2} & =   t_1^at_2^b t_3^c\left[e_1e_2 + (u-1)e_1e_2 + (u-1)e_1e_2 + (u-1)^2e_1e_2 \right] g_{1,2}\\
& =  
u^2 t_1^at_2^b t_3^ce_1e_2 g_{1,2}.
\end{align*}
Then, applying Eq. \ref{eimei+1=} we have:
\[
\mathfrak{m} r_{1,2} = u^2 t_1^at_2^b t_3^ce_1e_2 g_{1,2} =  u^2 e_1^{(a+b+c)}e_2 g_{1,2}.
\]
Therefore, by using Lemma \ref{tracelemma}, we obtain the desired expression for 
${\rm tr}(\mathfrak{m} r_{1,2}) $.
\end{proof}

\subsection{{\it Passing ${\rm tr}$ to the algebra ${\rm YTL}_{d,n}(u)$}}

In \cite{gojukola} we found the necessary and sufficient conditions so that ${\rm tr}$ passes to ${\rm YTL}_{d,n}(u)$. Indeed, we have the following:
 \begin{theorem}[{\cite[Theorem 6]{gojukola}}]\label{thmy}
The trace ${\rm tr}$ passes to the quotient algebra ${\rm YTL}_{d,n}(u)$ if and only if the $x_i$'s  are solutions of the ${\rm E}$--system and one of the two cases holds: 
\begin{enumerate}
\item [(i)] 
the $x_\ell$'s are  $d^{th}$ roots of unity and  $z=-\frac{1}{u+1}$ or $z=-1$,
\item [(ii)] 
the $x_\ell$'s are the solutions of the {\rm E}--system that are parametrized by the set $D=\{ m_1, m_2 \, | \, 0 \leq m_1,m_2 \leq d-1\, \mbox{and } m_1\neq m_2\}$ and they are expressed as: 
\[
x_\ell= \frac{1}{2} \left (\chi_{m_1}(t^\ell) + \chi_{m_2}(t^\ell) \right ),  \quad 0 \leq \ell \leq d-1.
\]
In this case we have that $z=-\frac{1}{2}$.
\end{enumerate}
\end{theorem}

\subsection{{\it Passing ${\rm tr}$ to the algebra ${\rm FTL}_{d,n}(u)$}}
The following lemma is  key to proving one of our main results (Theorem~\ref{akthmgen}).
Recall that the {\it support} of a function $x:C_d \rightarrow \mathbb{C}$ (or equivalently of an element $\sum_{k=0}^{d-1} x(t^k) t^k \in 
\mathbb{C}C_d$) is the subset of $C_d$ where the values of $x$ are non-zero.

\begin{lemma}\label{lemmaN3}
The trace {\rm tr } passes to ${\rm FTL}_{d,3}(u)$ if and only if the parameters of the trace {\rm tr} satisfy: 
\[
x_k = -z \left(\sum_{m\in {\rm Sup}_1}\chi_{ k}(t^{m}) + (u+1)\sum_{m\in {\rm Sup}_2}\chi_{ k}(t^{m}) \right)
\quad \text{and}\quad  
z=-\frac{1}{\vert {\rm Sup_1}\vert + (u+1)\vert {\rm Sup_2}\vert  },
\]
where ${\rm Sup}_1\cup \rm{Sup}_2$ (disjoint union) is the support of the Fourier transform of $x$, and $x$ is the complex function on $C_d$,   
that maps $0$ to $1$ and $k$ to the trace parameter  $x_k$ (cf. Section~\ref{esyssec}).

\end{lemma}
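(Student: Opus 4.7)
The plan is to reduce the condition that $\mathrm{tr}(\mathfrak{m}\,r_{1,2})=0$ for every inductive basis monomial $\mathfrak{m}$ of $\mathrm{Y}_{d,3}(u)$ to a single polynomial equation in $\mathbb{C}C_d$ that can be diagonalized by the Fourier transform. By Proposition~\ref{fourcaseslemma}, for each of the six families of basis monomials parametrized by $(a,b,c)\in(\mathbb{Z}/d\mathbb{Z})^3$, the trace $\mathrm{tr}(\mathfrak{m}\,r_{1,2})$ equals $u^{j}$ (with $j\in\{0,1,2,3\}$ depending on the family) times the common expression
$$
\Phi(a+b+c)\;:=\;(u+1)z^2\,x_{a+b+c}+(u+2)\,z\,E^{(a+b+c)}+\mathrm{tr}\!\left(e_1^{(a+b+c)}e_2\right).
$$
Since $u\neq 0$, the descent of $\mathrm{tr}$ to $\mathrm{FTL}_{d,3}(u)$ is therefore equivalent to the scalar system $\Phi(k)=0$ for every $k\in\mathbb{Z}/d\mathbb{Z}$.

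Next I would recognize this system as a single identity in $\mathbb{C}C_d$. By \eqref{x*x} one has $E^{(k)}=(x\ast x)(k)$ and $\mathrm{tr}(e_1^{(k)}e_2)=(x\ast x\ast x)(k)$, so the system becomes
$$
(u+1)z^2\,x+(u+2)\,z\,(x\ast x)+(x\ast x\ast x)=0\qquad\text{in}\ \mathbb{C}C_d.
$$
Applying the Fourier transform and using Proposition~\ref{propietrans}(1) to convert convolutions into coordinatewise products, this is equivalent to
$$
\widehat{x}\cdot\Bigl[\,\widehat{x}^{\,2}+(u+2)z\,\widehat{x}+(u+1)z^{2}\,\Bigr]=0,
$$
where all products are by coordinates. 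The bracket has discriminant $u^{2}z^{2}$ and therefore factors as $(\widehat{x}+z)(\widehat{x}+(u+1)z)$. Hence at every point $m$ the value $\widehat{x}(m)$ lies in the three-element set $\{0,-z,-(u+1)z\}$, whose elements are distinct as long as $u,z\neq 0$.

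Finally, I would set $\mathrm{Sup}_1:=\{m:\widehat{x}(m)=-z\}$ and $\mathrm{Sup}_2:=\{m:\widehat{x}(m)=-(u+1)z\}$, which are disjoint and whose union is the support of $\widehat{x}$, and apply Fourier inversion (parts (4)--(5) of Proposition~\ref{propietrans}) to recover $x$ as a linear combination of the characters $\chi_m$; this produces the formula for $x_k$ stated in the lemma. The remaining normalization $x_0=1$ is then a single scalar equation whose solution is exactly $z=-1/(|\mathrm{Sup}_1|+(u+1)|\mathrm{Sup}_2|)$, and conversely every pair $(x,z)$ given by these formulas yields a $\widehat{x}$ valued in $\{0,-z,-(u+1)z\}$, hence $\Phi\equiv 0$. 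The main obstacle I anticipate is not conceptual but verificational: one must check carefully that the six cases of Proposition~\ref{fourcaseslemma} really do share the \emph{same} common factor $\Phi$ (modulo the harmless $u^{j}$), as it is precisely this collapse of a six-parameter family into a single scalar identity that makes the subsequent harmonic-analytic argument possible.
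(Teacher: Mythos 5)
Your route is the paper's own: reduce, via Proposition~\ref{fourcaseslemma} and $u\neq 0$, to the vanishing of the single expression $\Phi(k)=(u+1)z^2x_k+(u+2)zE^{(k)}+\mathrm{tr}(e_1^{(k)}e_2)$ for all $k$, view this as one identity in $\mathbb{C}C_d$, Fourier--transform it into a coordinatewise cubic for $\widehat{x}$, split the support into $\mathrm{Sup}_1,\mathrm{Sup}_2$ according to the two nonzero roots, invert, and normalize with $x_0=1$. In fact your treatment is marginally cleaner than the paper's, which first replaces the system by $\mathbb{E}_0=0$ together with $\mathbb{E}_m-x_m\mathbb{E}_0=0$ and substitutes $-(u+2)zE=(u+1)z^2+\mathrm{tr}(e_1e_2)$ before factoring; transforming the whole system at once, as you do, leads to the same factorization. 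The ``verificational'' worry you raise about the six families sharing the common factor $\Phi$ is exactly the content of Proposition~\ref{fourcaseslemma} (via Lemmas~\ref{condeqsftl} and \ref{tracelemma}), so nothing further is needed there.

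There is, however, a normalization slip that, as written, does not produce the constants asserted in the lemma. With the convolution of Eq.~\ref{AA1} (which carries no $1/d$) and the definitions of $E^{(m)}$ and $e_i^{(m)}$, one has $E^{(k)}=\tfrac{1}{d}(x\ast x)(k)$ and $\mathrm{tr}(e_1^{(k)}e_2)=\tfrac{1}{d^2}(x\ast x\ast x)(k)$; the displayed form of \eqref{x*x} is loose on this point, but the paper's proof correctly inserts the factors $1/d$ and $1/d^2$. Hence the functional equation is
\begin{equation*}
(u+1)z^2\,x+\frac{(u+2)z}{d}\,x\ast x+\frac{1}{d^2}\,x\ast x\ast x=0 ,
\end{equation*}
whose Fourier transform factors coordinatewise as $\widehat{x}\,(\widehat{x}+dz)(\widehat{x}+dz(u+1))=0$ (Eq.~\ref{solve-ell}), not as $\widehat{x}(\widehat{x}+z)(\widehat{x}+(u+1)z)=0$. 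The factor $d$ is not cosmetic: in the inversion step you leave implicit, $\widehat{\widehat{x}}=d\sum_r x_{-r}t^r$ by Proposition~\ref{propietrans}(5), and it is precisely the cancellation of this $d$ against the $d$ in the roots $-dz$, $-dz(u+1)$ that yields $x_k=-z\bigl(\sum_{m\in\mathrm{Sup}_1}\chi_m(k)+(u+1)\sum_{m\in\mathrm{Sup}_2}\chi_m(k)\bigr)$ and then $z=-1/\bigl(\vert\mathrm{Sup}_1\vert+(u+1)\vert\mathrm{Sup}_2\vert\bigr)$. Carrying your unnormalized identities through the same inversion would instead give $x_k=-\tfrac{z}{d}(\cdots)$ and $z=-d/(\cdots)$, i.e.\ not the statement you claim to recover. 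The fix is simply to restore the $1/d$, $1/d^2$ normalizations before transforming; everything else in your argument, including the converse direction, then goes through exactly as in the paper.
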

\begin{proof}
Recall that  the trace {\rm tr} passes to ${\rm FTL}_{d,3}$ if and only if the Eqs. \ref{trwg12} hold, for all 
$\mathfrak{m}$ in the inductive basis of ${\rm Y}_{d,3}$. By using Proposition \ref{fourcaseslemma} follows that the trace  {\rm tr} passes to the quotient algebra ${\rm FTL}_{d,3}(u)$ if and only if the  trace parameters $z$, $x_1, \ldots , x_{d-1}$ satisfy the following system of equations:
\[
 \mathbb{E}_0= \mathbb{E}_1 = \cdots =\mathbb{E}_{d-1}= 0 ,
\]
where
\[
\mathbb{E}_m := (u+1)z^2x_m + (u+2) E^{(m)} z +{\rm tr}(e_1^{(m)}e_2) =0,  \quad 0 \leq m \leq d-1 .
\]
We note now that this system of equations above is equivalent to the system:
\begin{equation}\label{traeq2}
\begin{array}{rc} 
\mathbb{E}_0 =  0 & \\
\mathbb{E}_m -x_m\mathbb{E}_0 =0 & \text{where} \quad 1\leq m \leq d-1 .
\end{array}
\end{equation}
We will solve this system of equations, obtaining thus the proof of the lemma.

Recall that  $x_0 :=1$, $E^{(0)} = E$ and $e_i^{(0)} = e_i$, hence 
$
\mathbb{E}_0 =  (u+1)z^2 + 
(u+2) E z + {\rm tr}(e_1e_2)
$. Then the $(d-1)$ equations $\mathbb{E}_m - x_m \mathbb{E}_0 = 0$ of Eq. \ref{traeq2} become: 
\begin{equation} \label{1st-sys}
z(u+2)\big( E^{(m)}-x_m E \big )=- \left (\mathrm{tr} (e_1^{(m)}e_2)-x_m \, {\rm tr}(e_1e_2)  \right), \quad 1\leq m \leq d-1.
\end{equation}

Interpreting now the above equation in the functional notation of Section~\ref{sectioninv} and having in mind Eq. \ref{x*x}, it follows that Eq. \ref{1st-sys}  can be rewritten as:
\[
(u+2) z\left( \frac{1}{d} x\ast x- Ex\right) = -\left( \frac{1}{d^2} x\ast x\ast x- {\rm tr}(e_1e_2)x\right).
\]
Applying now the Fourier transform on the above functional equality and using Proposition \ref{propietrans}, we obtain:
\begin{equation} \label{11}
(u+2)z \left(\frac{\widehat{x}^2}{d}-E\widehat{x}  \right)=
-\left(\frac{\widehat{x}^3}{d^2}-{\rm tr}(e_1e_2)\widehat{x} \right) .
\end{equation}
Let now $\widehat{x}= \sum_{m=0}^{d-1} y_mt^m$. Then Eq. \ref{11} becomes:
\[
(u+2)z  \left(\frac{y_m^2}{d} -Ey_m \right) 
=-
\left(\frac{y_m^3}{d^2}- {\rm tr}(e_1e_2)y_m \right) .
\]
Hence
\begin{equation} \label{12}
y_m \left(
\frac{y_m^2}{d^2} + (u+2)z \frac{y_m}{d}
-(u+2)z E-{\rm tr}(e_1e_2)
\right)=0 .
\end{equation}
Now, from equation $\mathbb{E}_0  =0$, we have that $-(u+2)zE= (u+1)z^2 + {\rm tr}(e_1e_2)  $. Replacing this 
expression of $ -(u+2)zE$ in Eq.~\ref{12} we have that: 
\[
y_m \left(
\frac{y_m^2}{d^2} + (u+2)z \frac{y_m}{d}
+(u+1)z^2
\right)=0,
\]
or equivalently (notice that the equivalence still holds even if we specialize $u=-1$, where the above equation is not quadratic):
\begin{equation}  \label{solve-ell}
y_m \left( y_m + dz \right)
\left( y_m +  dz (u+1) \right)
=0 .
\end{equation}

Denote ${\rm Sup}_1\cup {\rm Sup}_2$ the support of $\widehat{x}$, where 
\[
{\rm Sup}_1:= \{m\in C_d\,; \,y_m = - dz  \} \quad \text{and }\quad  {\rm Sup}_2:=\{m\in C_d\,; \,y_m = - dz(u+1)\},
\]
hence
\[
\widehat{x} = \sum_{m\in {\rm Sup}_1}-dzt^m + \sum_{m\in {\rm Sup}_2}- dz(u+1)t^m.
\]
Notice again that if specialize $u=-1$, then the support of $\widehat{x}$ is just ${\rm Sup}_1$.
Then
\[
\widehat{\widehat{x}} = - dz\sum_{m\in {\rm Sup}_1}\widehat{\delta}_m - dz(u+1)\sum_{m\in {\rm Sup}_2}\widehat{\delta}_m,
\]
thus from argument (4) of Proposition \ref{propietrans} we have: 
\[
\widehat{\widehat{x}} = - z\left( \sum_{m\in {\rm Sup}_1}\mathbf{i}_{-m}  + (u+1)\sum_{m\in {\rm Sup}_2}\mathbf{i}_{-m}\right).
\]
Therefore, having in mind now (5) of Proposition \ref{propietrans}, we deduce that:
\begin{equation}\label{valuesxk}
x_k = -z \left(\sum_{m\in {\rm Sup}_1}\chi_{k}(t^{m}) + (u+1)\sum_{m\in {\rm Sup}_2}\chi_{k}(t^{m}) \right) .
\end{equation} 

Having in mind that $x_0=1$, one can determine the values of $z$. Indeed, from Eq. \ref{valuesxk}, we have that:
\begin{equation} \label{valuesz0}
1= x_0 = -z(\vert {\rm Sup_1}\vert +(u+1) \vert {\rm Sup_2}\vert  ),
\end{equation}
or equivalently  (keep in mind that the assumption $x_0=1$ forces the denominator to be non-zero and hence the support of $\widehat{x}$ is not empty):
\begin{equation}\label{valuesz}
z=-\frac{1}{\vert {\rm Sup_1}\vert +(u+1) \vert {\rm Sup_2}\vert}  .
\end{equation}
By the same reasoning $z$ is also non-zero.
\end{proof}

Keeping the same notation with the above lemma, we have:
\begin{theorem}\label{akthmgen}
The trace ${\rm tr}$ defined on ${\rm Y}_{d,n}(u)$ passes to the quotient algebra ${\rm FTL}_{d,n}(u)$ if and only if the trace parameters $z, x_1 ,\ldots , x_{d-1}$  satisfy the conditions of Lemma \ref{lemmaN3}, namely 
Eqs. \ref{valuesxk} and  \ref{valuesz}.
\end{theorem}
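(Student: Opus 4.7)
The plan is to prove necessity and sufficiency separately, bootstrapping the $n=3$ case from Lemma~\ref{lemmaN3} to arbitrary $n\geq 3$ by induction.

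For necessity, I will exploit the fact that ${\rm Y}_{d,3}(u)$ sits inside ${\rm Y}_{d,n}(u)$ as the subalgebra generated by $t_1,t_2,t_3,g_1,g_2$. The element $r_{1,2}$ lies in this subalgebra, and each of the six monomial families listed in \eqref{basicwords} remains in the inductive basis of the larger algebra. If ${\rm tr}$ descends to ${\rm FTL}_{d,n}(u)$, then the identities ${\rm tr}(\mathfrak{m}\, r_{1,2})=0$ must hold for every such $\mathfrak{m}$, and the resulting system on the parameters is exactly the one solved in the proof of Lemma~\ref{lemmaN3}, forcing \eqref{valuesxk} and \eqref{valuesz}.

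For sufficiency, assume the parameters satisfy \eqref{valuesxk}--\eqref{valuesz}. Since the defining ideal of ${\rm FTL}_{d,n}(u)$ is principal by Theorem~\ref{prop2} and ${\rm tr}$ is cyclic, it suffices to check that ${\rm tr}(\mathfrak{m}\,r_{1,2})=0$ for every $\mathfrak{m}$ in the inductive basis of ${\rm Y}_{d,n}(u)$. I will proceed by induction on $n$, the base $n=3$ being Lemma~\ref{lemmaN3}. For the inductive step, Proposition~\ref{inductiveYH} splits the basis elements of ${\rm Y}_{d,n+1}(u)$ into two families:
\[
\mathfrak{m}_n\, t_{n+1}^k \qquad\text{and}\qquad \mathfrak{m}_n\, g_n g_{n-1}\cdots g_i\, t_i^k,
\]
with $\mathfrak{m}_n$ in the inductive basis of ${\rm Y}_{d,n}(u)$. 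In the first family, $t_{n+1}$ commutes with $r_{1,2}\in{\rm Y}_{d,3}(u)$ (since $n+1\geq 4$), so the Markov rule for $t_{n+1}$ gives
\[
{\rm tr}(\mathfrak{m}_n\, t_{n+1}^k\, r_{1,2})=x_k\,{\rm tr}(\mathfrak{m}_n\, r_{1,2}),
\]
which vanishes by induction. In the second family, I will first cycle under ${\rm tr}$ so that $g_n$ stands at the far right; the remaining factor $g_{n-1}\cdots g_i t_i^k r_{1,2}\mathfrak{m}_n$ belongs to ${\rm Y}_{d,n}(u)$, so the Markov rule for $g_n$ produces a factor $z$ and leaves ${\rm tr}(\mathfrak{m}_n\, g_{n-1}\cdots g_i\, t_i^k\, r_{1,2})$. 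Expanding $\mathfrak{m}_n g_{n-1}\cdots g_i t_i^k\in{\rm Y}_{d,n}(u)$ in the inductive basis and applying the induction hypothesis termwise kills this trace.

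The main technical point requiring care is ensuring that every intermediate expression to which a Markov rule is applied genuinely belongs to the appropriate subalgebra ${\rm Y}_{d,n}(u)$; this is transparent here because $r_{1,2}$ lives in ${\rm Y}_{d,3}(u)$, hence in every ${\rm Y}_{d,n}(u)$ with $n\geq 3$, and no new harmonic-analysis computation will be needed beyond the one already carried out in Lemma~\ref{lemmaN3}.
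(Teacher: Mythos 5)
Your proposal is correct and follows essentially the same route as the paper: induction on $n$ with Lemma~\ref{lemmaN3} as the base case, the inductive basis of Proposition~\ref{inductiveYH} to split the two families of words, the Markov and framing rules to strip off $g_n$ or $t_{n+1}^k$, and linearity plus the induction hypothesis to conclude. Your explicit cycling of $g_n$ to the far right before invoking the Markov rule, and your separate treatment of necessity via the embedding ${\rm Y}_{d,3}(u)\subseteq{\rm Y}_{d,n}(u)$, only make explicit steps the paper leaves implicit.
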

\begin{proof}
The proof is by induction on $n$. The case $n=3$ is the lemma above.  Assume now that the statement holds for all ${\rm FTL}_{d,k}(u)$, where $ k \leq n$, that is:
\[
{\rm tr} ( a_k \, r_{1,2} ) = 0,
\]
for all $a_k \in {\rm Y}_{d,k}(u)$, $k \leq n$. We will show the statement for $k=n+1$. It suffices to prove that the trace vanishes on any element of the form
 $a_{n+1}r_{1,2}$, where $a_{n+1}$ belongs to the inductive basis of ${\rm Y}_{d,n+1}(u)$ (recall Eq.~\ref{inductiveYH}), given the conditions of the theorem. Namely:
\[{\rm tr}(a_{n+1}\, r_{1,2}) =0.\]
Since $a_{n+1}$ is in the inductive basis of ${\rm Y}_{d,n+1}(u)$, it is of one of the following forms:
\[
a_{n+1}=a_{n} g_{n} \ldots g_i t_i^k \quad \mbox{or} \quad a_{n+1}=a_{n}t_{n+1}^k ,
\]
where $a_n$ is in the inductive basis of ${\rm Y}_{d,n}(u)$. For the first case we have:
\[
{\rm tr} (a_{n+1} \, r_{1,2}) = {\rm tr} (a_{n} g_{n} \ldots g_i t_i^k \, r_{1,2})
= z\, {\rm tr}(a_{n} g_{n-1} \ldots g_i t_i^k\, r_{1,2}) =z\, {\rm tr}( w \, r_{1,2}),
\]
where $w:=a_{n} g_{n-1} \ldots g_i t_i^k$. Notice now that $w $ is a word in ${\rm Y}_{d,n}(u)$ and so, by the linearity of the trace, we have that ${\rm tr}(w \,r_{1,2})$ is a linear combination of traces of the form ${\rm tr}(a_{n}\,r_{1,2})$, where $a_{n}$ is in the inductive basis of ${\rm Y}_{d,n}(u)$. Therefore, by the induction hypothesis, we deduce that:
\[ 
{\rm tr}(w \, r_{1,2}) =0,
\]
 if and only if the conditions of the Theorem are satisfied. Therefore the statement is proved. The second case is proved similarly. Hence, the proof is concluded.
\end{proof}
\begin{corollary}\label{coroles}
In the case where one of the sets ${\rm Sup}_1$ or ${\rm Sup}_2$ is the empty set, the values of the $x_k$'s are solutions of the ${\rm E}$--system. More precisely,  if ${\rm Sup}_1$ is the empty set, the $x_k$'s are the solutions of the   ${\rm E}$--system parametrized by ${\rm Sup}_2$ and $z= -1/(u+1)\vert{\rm Sup}_2\vert$. If ${\rm Sup}_2$ is the empty set, then $x_k$'s are the solutions of the ${\rm E}$--system parametrized by ${\rm Sup}_1$ and 
$z= -1/\vert {\rm Sup}_1 \vert$.
\end{corollary}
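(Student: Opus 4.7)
The plan is to carry out a direct substitution into the formulas of Theorem~\ref{akthmgen} and recognize the resulting expression as the parametrized solution $x_{_D}$ of the ${\rm E}$--system given by Eq.~\ref{solEsys}. No new technology beyond the theorem itself is needed; the content of the corollary is really a matter of specialization.

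First I would treat the case ${\rm Sup}_1 = \emptyset$. The formula for the trace parameters in Theorem~\ref{akthmgen} collapses to
$$x_k = -z\,(u+1)\sum_{m \in {\rm Sup}_2}\chi_m(k),$$
and the formula for $z$ becomes $z = -1/\bigl((u+1)|{\rm Sup}_2|\bigr)$. Substituting this value of $z$ into the expression for $x_k$ yields
$$x_k = \frac{1}{|{\rm Sup}_2|}\sum_{m \in {\rm Sup}_2}\chi_m(k),$$
which is exactly the value of the function $x_{_{{\rm Sup}_2}}$ at $k$, in view of Eq.~\ref{solEsys}. Hence $(x_1,\ldots,x_{d-1})$ is precisely the solution of the ${\rm E}$--system parametrized by the non--empty subset ${\rm Sup}_2$ of $\mathbb{Z}/d\mathbb{Z}$.

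The case ${\rm Sup}_2 = \emptyset$ is completely analogous. The formula for $x_k$ reduces to $x_k = -z\sum_{m \in {\rm Sup}_1}\chi_m(k)$, while the accompanying formula for $z$ becomes $z = -1/|{\rm Sup}_1|$. Substituting, one obtains
$$x_k = \frac{1}{|{\rm Sup}_1|}\sum_{m \in {\rm Sup}_1}\chi_m(k),$$
which again matches the parametrized solution $x_{_{{\rm Sup}_1}}$ from Eq.~\ref{solEsys}.

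There is no real obstacle in this argument: once Theorem~\ref{akthmgen} is in hand, the corollary amounts to checking that in each of the degenerate cases the prefactor $-z$ (respectively $-z(u+1)$) is exactly the reciprocal $1/|{\rm Sup}_1|$ (respectively $1/|{\rm Sup}_2|$) required by Eq.~\ref{solEsys}. The only small conceptual point worth emphasizing in the write--up is that the supports ${\rm Sup}_1$ and ${\rm Sup}_2$ cannot both be empty (otherwise $\widehat{x} = 0$, contradicting $x_0 = 1$), so assuming one of them empty forces the other to be non--empty and therefore a legitimate parameter for an ${\rm E}$--system solution.
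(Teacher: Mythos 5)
Your proposal is correct and follows essentially the same route as the paper: the paper's proof simply invokes Eq.~\ref{solEsys} together with the formulas of Theorem~\ref{akthmgen}, and your explicit substitution of $z$ into the expression for $x_k$ is exactly that comparison carried out in detail. The added observation that ${\rm Sup}_1$ and ${\rm Sup}_2$ cannot both be empty (since $x_0=1$) is a sensible, if minor, refinement.
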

\begin{proof}
The proof follows from Eq. \ref{solEsys} and the expression given in theorem above for the $x_k$'s.
\end{proof}

\subsection{{\it Passing ${\rm tr}$ to the algebra ${\rm CTL}_{d,n}(u)$}}\label{ctlsection}
The method for finding the necessary and sufficient conditions for ${\rm tr}$ to pass  to the quotient algebra  ${\rm CTL}_{d,n}(u)$ is completely analogous to that of the previous subsection. So, we will need the following analogue of Proposition \ref{fourcaseslemma}.

\begin{proposition}\label{trmc12}
Define $\mathbb{G}$, as follows:
\[
\mathbb{G} = (u+1)z^2 \sum_{k=0}^{d-1} x_k + (u+2)z  \sum_{k=0}^{d-1} E^{(k)} + \sum_{k=0}^{d-1} {\rm tr}(e_1^{(k)}e_2) .
\]
Then for all $0\leq a, b, c\leq d-1$, we have: 
\[
\begin{array}{clll}
{\it (1)} & {\rm tr}( \mathfrak{m}c_{1,2} ) = \mathbb{G}& \mbox{for} &  \mathfrak{m}=t_1^{a}t_2^{b}t_3^c \\
{\it (2)} &{\rm tr}( \mathfrak{m} c_{1,2}) = u \mathbb{G} &\mbox{for} &  \mathfrak{m}= t_1^{a}g_1t_1^{b}t_3^c  \qquad \mbox{and} \quad \mathfrak{m}= t_1^{a}t_2^{b}g_2t_2^c  \\
{\it (3)} &{\rm tr}( \mathfrak{m} c_{1,2}) =
u^2 \mathbb{G} & \mbox{for} & \mathfrak{m}= t_1^{a}t_2^{b}g_2g_1t_1^c \quad \mbox{and} \quad \mathfrak{m}= t_1^{a}g_1t_1^{b} g_2t_2^c\\
{\it (4)} & {\rm tr}( \mathfrak{m}c_{1,2} ) = u^3 \mathbb{G} &\mbox{for}  &\mathfrak{m}=  t_1^{a}g_1t_1^{b} g_2g_1t_1^c. 
\end{array}
\]
\end{proposition}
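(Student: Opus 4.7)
My plan is to mirror the structure of the proof of Proposition~\ref{fourcaseslemma}, while exploiting a strong \emph{absorption} property of $c_{1,2}$ that is unavailable for $r_{1,2}$. First I will observe, directly from the definition $c_{1,2}=\sum_{a',b',c',\,w} t_1^{a'} t_2^{b'} t_3^{c'} g_w$, that for all $0\le a,b,c\le d-1$
$$
t_1^a t_2^b t_3^c \cdot c_{1,2} = c_{1,2},
$$
by a simple re--indexing of the summation over $(\mathbb{Z}/d\mathbb{Z})^3$. Averaging this identity over appropriate indices then yields
$$
e_1\,c_{1,2}=e_2\,c_{1,2}=e_{1,3}\,c_{1,2}=e_1e_2\,c_{1,2}=c_{1,2}.
$$

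Next I will settle claim~(1) by computing $\mathrm{tr}(c_{1,2})$, which by absorption is the value of $\mathrm{tr}(\mathfrak{m}c_{1,2})$ for every $\mathfrak{m}$ of type~(1). Combining Eq.~\ref{cij=} with the identity $t_1^k e_1 = e_1^{(k)}$, I rewrite
$$
c_{1,2} \;=\; \Bigl(\sum_{k=0}^{d-1} t_1^k\Bigr) e_1e_2\, g_{1,2} \;=\; \sum_{k=0}^{d-1} e_1^{(k)} e_2 \, g_{1,2},
$$
and a direct application of Lemma~\ref{tracelemma} term by term gives $\mathrm{tr}(c_{1,2})=\mathbb{G}$.

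For claims~(2)--(4) I will use the framing--braid relations $g_it_i=t_{i+1}g_i$ and $g_it_{i+1}=t_ig_i$ to rewrite each $\mathfrak{m}$ uniquely in the form $t_1^a t_2^b t_3^c\, g_w$ for some $w\in S_3$ (the same manipulation used in Proposition~\ref{fourcaseslemma}); the length $l(w)$ is $1$, $2$ or $3$ according to whether $\mathfrak{m}$ is of type~(2), (3) or (4). Applying the corresponding identity of Lemma~\ref{ctllemma} to $g_w c_{1,2}$ and then replacing each of $e_1$, $e_2$, $e_{1,3}$, $e_1e_2$ by $1$ as operators on $c_{1,2}$ (using the absorption property), the bracketed coefficient collapses into a scalar polynomial in $u$. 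The three polynomial identities $1+(u-1)=u$, $1+2(u-1)+(u-1)^2=u^2$, and $1+3(u-1)+(u-1)^2(u+2)=u^3$ give $\mathfrak{m}\, c_{1,2}=u^{\,l(w)}\,c_{1,2}$, whence
$$
\mathrm{tr}(\mathfrak{m}\, c_{1,2}) \;=\; u^{\,l(w)}\,\mathrm{tr}(c_{1,2}) \;=\; u^{\,l(w)}\,\mathbb{G},
$$
which is precisely the claim.

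There is no significant obstacle: the absorption property of the first step, which reflects the fact that all framings occur symmetrically in $c_{1,2}$, is the distinguishing feature that reduces the whole proof to one invocation of Lemma~\ref{tracelemma} together with three elementary polynomial identities in $u$.
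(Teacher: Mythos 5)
Your proof is correct, and it is essentially the paper's (implicit) argument: the paper gives no separate proof, stating only that one proceeds as for Proposition~\ref{fourcaseslemma} using Lemma~\ref{ctllemma} in place of Lemma~\ref{condeqsftl}, and your computation is exactly that analogue, with the writing $c_{1,2}=\sum_{k}e_1^{(k)}e_2\,g_{1,2}$ and the summed application of Lemma~\ref{tracelemma} giving $\mathbb{G}$. Your ``absorption'' identities $t_1^at_2^bt_3^c\,c_{1,2}=c_{1,2}$ and $e_1c_{1,2}=e_2c_{1,2}=e_{1,3}c_{1,2}=e_1e_2c_{1,2}=c_{1,2}$ are just a clean repackaging of the re--indexing over $(\mathbb{Z}/d\mathbb{Z})^3$ that the analogous computation performs via the shifts $e_1^{(k)}$, and the scalar collapses $1+(u-1)=u$, $1+2(u-1)+(u-1)^2=u^2$, $1+3(u-1)+(u-1)^2(u+2)=u^3$ are verified correctly.
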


Following now the analogous reasoning that was used to prove Theorem \ref{akthmgen} and having in mind 
 Eq.~\ref{trwg12}, Corollary \ref{ctlpr},  Lemma \ref{ctllemma} and  Proposition \ref{trmc12},  we obtain  the following theorem.  
\begin{theorem}\label{ctlthm}
The trace ${\rm tr}$ passes to the quotient algebra ${\rm CTL}_{d,n}(u)$ if and only if the parameter $z$ and the $x_i$'s are related through the equation:
 \begin{equation}\label{ctlbasic}
 (u+1)z^2\sum_{k\in \mathbb{Z}/d\mathbb{Z}} x_k  +  (u+2)z\sum_{k\in \mathbb{Z}/d\mathbb{Z}} E^{(k)}+ \sum_{k \in \mathbb{Z}/d\mathbb{Z}}{\rm tr}(e_1^{(k)} e_2)=0.
 \end{equation}
\end{theorem}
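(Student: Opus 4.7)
The plan is to mirror the proof of Theorem~\ref{akthmgen} almost verbatim, exploiting the fact that the combinatorics here is strictly simpler. By Corollary~\ref{ctlpr} the defining ideal of ${\rm CTL}_{d,n}(u)$ is principal with generator $c_{1,2}$, so by linearity of ${\rm tr}$ the trace descends to ${\rm CTL}_{d,n}(u)$ if and only if ${\rm tr}(\mathfrak{m}\,c_{1,2})=0$ for every $\mathfrak{m}$ in the inductive basis of ${\rm Y}_{d,n}(u)$ (compare Eq.~\ref{trwg12}).

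First I would establish the base case $n=3$. Proposition~\ref{trmc12} computes ${\rm tr}(\mathfrak{m}\,c_{1,2})$ for each of the six families of inductive-basis monomials of ${\rm Y}_{d,3}(u)$; in every case the output is $u^{j}\mathbb{G}$ for some $j\in\{0,1,2,3\}$. Since $u\in\mathbb{C}^{\times}$, simultaneous vanishing of these six values is equivalent to the single scalar equation $\mathbb{G}=0$, which is precisely Eq.~\ref{ctlbasic}. This is the key simplification relative to Lemma~\ref{lemmaN3}: the factor $\sum_{k=0}^{d-1}t_{1}^{k}$ appearing in the identity $c_{1,2}=\bigl(\sum_{k}t_{1}^{k}\bigr)r_{1,2}$ from Eq.~\ref{cij=} immediately sums the $d$ separate conditions $\mathbb{E}_{0}=\cdots=\mathbb{E}_{d-1}=0$ of the FTL case into one, so no Fourier analysis and no E-condition emerge.

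Next I would induct on $n\ge 3$ following the inductive step of Theorem~\ref{akthmgen}. Given $a_{n+1}$ in the inductive basis of ${\rm Y}_{d,n+1}(u)$, Proposition~\ref{inductiveYH} puts it in one of the two shapes $\mathfrak{m}_{n}g_{n}g_{n-1}\cdots g_{i}t_{i}^{k}$ or $\mathfrak{m}_{n}t_{n+1}^{k}$. Since $c_{1,2}\in{\rm Y}_{d,n}(u)$ (it only involves $t_{1},t_{2},t_{3},g_{1},g_{2}$, and $n\ge 3$) and $t_{n+1}$ commutes with all of these, a cyclic rearrangement under ${\rm tr}$ combined with the Markov property ${\rm tr}(a\,g_{n})=z\,{\rm tr}(a)$ in the first case, or with ${\rm tr}(a\,t_{n+1}^{k})=x_{k}\,{\rm tr}(a)$ in the second, produces
\[
{\rm tr}(a_{n+1}\,c_{1,2})=\lambda\,{\rm tr}(\tilde{a}\,c_{1,2})
\]
for some scalar $\lambda$ and some $\tilde{a}\in{\rm Y}_{d,n}(u)$. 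Expanding $\tilde{a}$ in the inductive basis of ${\rm Y}_{d,n}(u)$ and applying the induction hypothesis closes the argument.

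I do not anticipate any serious obstacle: the only non-routine ingredient is Proposition~\ref{trmc12}, which is already in hand and is proved exactly as Proposition~\ref{fourcaseslemma} using Lemma~\ref{ctllemma}, the identity $t_{1}^{a}t_{2}^{b}t_{3}^{c}e_{1}e_{2}=e_{1}^{(a+b+c)}e_{2}$ from Eq.~\ref{eimei+1=}, and summation over $a+b+c\pmod{d}$ to produce the three sums defining $\mathbb{G}$.
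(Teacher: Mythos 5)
Your proposal is correct and follows essentially the same route as the paper, which proves Theorem~\ref{ctlthm} exactly by combining Eq.~\ref{trwg12}, Corollary~\ref{ctlpr}, Lemma~\ref{ctllemma} and Proposition~\ref{trmc12} with the inductive argument of Theorem~\ref{akthmgen}. Your added observation that the six trace values $u^{j}\mathbb{G}$ collapse to the single condition $\mathbb{G}=0$ (so no Fourier analysis is needed, unlike in Lemma~\ref{lemmaN3}) is precisely the simplification implicit in the paper's sketch.
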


\subsection{{\it Comparison of the three trace conditions}}
 In this section we will compare the conditions that need to be applied to the trace paramaters $z$ and $x_i$, $i=1, \ldots , d-1$ so that ${\rm tr}$ passes to each one of the quotient algebras ${\rm YTL}_{d,n}(u)$, ${\rm{FTL}_{d,n}(u)}$ and ${\rm CTL}_{d,n}(u)$. 
 
By  comparing Theorem~\ref{thmy} and Theorem~\ref{akthmgen}, we observe that the conditions such that ${\rm tr}$ passes to ${\rm YTL}_{d,n}(u)$ are contained in the conditions such that ${\rm tr}$ passes to ${\rm FTL}_{d,n}(u)$. 

Moreover, Theorem~\ref{akthmgen} can be rephrased in the following way:
\begin{theorem}\label{thmf}
The trace ${\rm tr}$ passes to the quotient algebra ${\rm FTL}_{d,n}(u)$ if and only if the parameter $z$ and the $x_i$'s are related through the equation:
 \[
 (u+1)z^2 x_k  +  (u+2)z E^{(k)}+{\rm tr}(e_1^{(k)} e_2)=0, \qquad  k \in \mathbb{Z}/d\mathbb{Z}.
 \] 

\end{theorem}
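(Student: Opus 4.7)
The plan is to observe that Theorem~\ref{thmf} is essentially a restatement of the trace-passage condition in its rawest form, before the Fourier-theoretic reduction that produced Eqs.~\ref{valuesxk} and \ref{valuesz} in the proof of Lemma~\ref{lemmaN3}. Thus I would not re-enter harmonic analysis; instead I would read off the condition directly from the trace computations already established in Proposition~\ref{fourcaseslemma}.

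First I would handle the case $n=3$. Recall from the proof of Lemma~\ref{lemmaN3} that, by the linearity of ${\rm tr}$ together with Corollary~\ref{ftlpr} (principality of the defining ideal), ${\rm tr}$ descends to ${\rm FTL}_{d,3}(u)$ if and only if ${\rm tr}(\mathfrak{m}\, r_{1,2}) = 0$ for every monomial $\mathfrak{m}$ in the inductive basis of ${\rm Y}_{d,3}(u)$. Proposition~\ref{fourcaseslemma} shows that each such trace is (up to a nonzero factor $u^j$, $j=0,1,2,3$) equal to
\[
\mathbb{E}_{a+b+c} := (u+1)z^2 x_{a+b+c} + (u+2)z\, E^{(a+b+c)} + {\rm tr}(e_1^{(a+b+c)} e_2),
\]
where the triples $(a,b,c)$ range over $(\mathbb{Z}/d\mathbb{Z})^3$. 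Since $a+b+c$ attains every residue class in $\mathbb{Z}/d\mathbb{Z}$, the vanishing of all such traces is equivalent to $\mathbb{E}_k = 0$ for every $k \in \mathbb{Z}/d\mathbb{Z}$, which is precisely the asserted condition.

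To pass from $n=3$ to arbitrary $n$, I would rerun verbatim the induction step from the proof of Theorem~\ref{akthmgen}. Namely, by Proposition~\ref{inductiveYH} every element $a_{n+1}$ in the inductive basis of ${\rm Y}_{d,n+1}(u)$ has the form $a_n g_n g_{n-1}\cdots g_i t_i^k$ or $a_n t_{n+1}^k$ with $a_n$ in the inductive basis of ${\rm Y}_{d,n}(u)$. Applying the Markov property of ${\rm tr}$ in the first case and the $t_{n+1}$-rule in the second, one reduces ${\rm tr}(a_{n+1} r_{1,2})$ to a scalar multiple of ${\rm tr}(\widetilde{a}\, r_{1,2})$ with $\widetilde a \in {\rm Y}_{d,n}(u)$, and the induction hypothesis closes the argument. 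The hard work was already done in Lemma~\ref{lemmaN3} and Proposition~\ref{fourcaseslemma}; no genuine new obstacle appears here, only the observation that the cleaner formulation in terms of $\mathbb{E}_k=0$ is logically identical to the explicit parametrisation by $({\rm Sup}_1,{\rm Sup}_2)$ obtained earlier via the Fourier transform, since both are exact statements of the same linear-in-$x_k$ system.
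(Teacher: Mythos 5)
Your proposal is correct and follows essentially the paper's own route: the equivalence of trace passage with the system $(u+1)z^2x_k+(u+2)z\,E^{(k)}+{\rm tr}(e_1^{(k)}e_2)=0$ for all $k$ is exactly the first step in the proof of Lemma~\ref{lemmaN3} (via Proposition~\ref{fourcaseslemma}), and the induction to general $n$ is the argument of Theorem~\ref{akthmgen}, so Theorem~\ref{thmf} is indeed just that intermediate statement before the Fourier-transform parametrisation. Your only deviation is the harmless streamlining of not passing through the explicit $({\rm Sup}_1,{\rm Sup}_2)$ solution and back.
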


\noindent This implies that the conditions such that the trace passes to the quotient algebra ${\rm FTL}_{d,n}(u)$ are contained in those of Theorem~\ref{ctlthm}. 

 All of the above can be summarized in the following table:
\begin{table}[H]
\[
 \begin{tabular} {l | c  c c c c c c}
  & ${\rm Y}_{d,n}(u)$ & $\twoheadrightarrow$ & ${\rm CTL}_{d,n}(u)$ & $\twoheadrightarrow$ & ${\rm FTL}_{d,n}(u)$ & $\twoheadrightarrow$ & ${\rm YTL}_{d,n}(u)$\\
 \hline
 $z$ &  free& \multirow{2}{*}{$\hookleftarrow$} & \multirow{2}{*}{Theorem~\ref{ctlthm}} & \multirow{2}{*}{$\hookleftarrow$} & \multirow{2}{*}{Theorem~\ref{thmf}} &\multirow{2}{*}{ $\hookleftarrow$} & \multirow{2}{*}{Theorem~\ref{thmy}}\\
 $x_i$ & free &
 \end{tabular} 
\]
\caption{Relations of the algebras and the trace conditions.}
\end{table}

\noindent
The first row includes the projections between the algebras while the second shows the inclusions of the trace conditions for each case.
\begin{remark} \rm
By Theorems~\ref{akthmgen} and \ref{thmf} and by Corollary~\ref{coroles}, the necessary and sufficient conditions for the trace $\rm tr $ to pass to ${\rm FTL}_{d,n}(u)$ include the solutions of the ${\rm E}$--system, leading directly to link invariants derived from this  algebra (see Section~\ref{knotinv}). On the other hand, the conditions on the $x_i$'s for the algebra ${\rm CTL}_{d,n}(u)$ are too loose as indicated by Theorem~\ref{ctlthm}. Moreover, as we shall see in Section~\ref{knotinv} the resulting invariants from ${\rm CTL}_{d,n}(u)$ coincide either with invariants from ${\rm Y}_{d,n}(u)$ or with invariants from ${\rm FTL}_{d,n}(u)$. For these reasons, the algebra ${\rm CTL}_{d,n}(u)$ will be discarded as a possible framization of the Temperley--Lieb algebra.

\end{remark}

\section{Knot invariants}\label{knotinv}
In this section we define framed and classical link invariants related to the algebras ${\rm FTL}_{d,n}(u)$ and ${\rm CTL}_{d,n}(u)$, using the results of the previous sections. The general scheme for defining these invariants follows Jones' method \cite{jo,Homfly}. More precisely, one uses the (framed) braid equivalence corresponding to (framed) link isotopy, the mapping of the (framed) braid group to the knot algebra in question and the Markov trace on this algebra, which, upon re-scaling and normalization according to the braid equivalence, yields isotopy invariants of (framed) links.
\subsection{{\it The Homflypt and the Jones polynomials}}
It is known that by re-scaling and normalizing the Ocneanu trace $\tau$ on ${\rm H}_n(u)$, one can define the 2-variable Jones or Homflypt polynomial, $P(\lambda_{\rm H}, u)$ \cite{jo}. Namely, we have:
\[
P(\lambda_{\rm H},u)(\widehat{\alpha}) = \left( - \frac{1-\lambda_{\rm H} \, u}{\sqrt{\lambda_{\rm H}}(1-u)}\right)^{n-1} \left (\sqrt{\lambda_{\rm H}}\right)^{\varepsilon(\alpha)} {\rm \tau}(\pi(\alpha)),
\]
where: $\alpha \, \in \, \cup_{\infty} B_{n}$, $\lambda_{\rm H} = \frac{1-u+\zeta}{u \zeta}$  is the ``re-scaling factor'', $\pi$ is the natural epimorphism of $\mathbb{C}(u)B_n$ on ${\rm H}_n(u)$ that sends the braid generator $\sigma_i$ to $h_i$ and $\varepsilon(\alpha)$ is the algebraic sum of the exponents of the $\sigma_i$'s in $\alpha$. Further,  by specializing $\zeta$ to $-\frac{1}{u+1}$, the non-trivial value for which the Ocneanu trace $\tau$ passes to the quotient algebra ${\rm TL}_n(u)$, the Jones polynomial, $V(u)$, can be defined through the Homflypt polynomial \cite{jo}, as follows:
\[
V(u)(\widehat{\alpha}) = \left(- \frac{1+u}{\sqrt{u}} \right)^{n-1} \left(\sqrt{u}\right)^{\varepsilon(\alpha)} {\rm \tau}(\pi(\alpha)) = P(u,u)(\widehat{\alpha}).
\]

\subsection{{\it Invariants from ${\rm Y}_{d,n}(u)$}}In \cite{jula} it is proved that the trace ${\rm tr}$ defined on ${\rm Y}_{d,n}(u)$ can be re-scaled according to the braid equivalence corresponding to isotopic framed links if and only if the framing parameters $x_i$'s of ${\rm tr}$ furnish a solution of the ${\rm E}$--system (recall discussion in Section~\ref{sectioninv}). Let  $X_D =( \rm x_1, \ldots , x_{d-1} )$ be a solution of the ${\rm E}$--system parametrized by the non-empty set $D$ of $ \mathbb{Z}/d\mathbb{Z}$. We have the following definition:
\begin{definition}[{\cite[Definition~3 ]{ChLa}}] \rm
The trace map ${\rm tr}_D$ defined as the trace ${\rm tr}$  with the parameters $x_i$ specialized to the values ${\rm x}_i$, shall be called the \emph{specialized trace}  with parameter $z$.
\end{definition}
Note that for $d=1$ the traces ${\rm tr}$ and ${\rm tr}_D$ coincide with the Ocneanu trace. By normalizing ${\rm tr}_D$, an invariant for {\it framed links} is obtained \cite{jula}:
\begin{equation}\label{gammainv}
\Gamma_{d,D}(w,u)(\widehat{\alpha}) = \left(- \frac{(1 - w u)|D|}{\sqrt{w} (1-u) } \right)^{n-1} \left(\sqrt{w}\right)^{\varepsilon(\alpha)} {\rm tr}_D(\gamma(\alpha)),
\end{equation}
where: $w = \frac{z + (1-u)E}{uz}$ is the re-scaling factor, $E=\frac{1}{|D|}$ \cite{jula,jula4}, $\gamma$ is the natural epimorphism of the framed braid group algebra $ \mathbb{C}(u)\mathcal{F}_n$ on the algebra ${\rm Y}_{d,n}(u)$, and $\alpha \in \cup_{\infty} \mathcal{F}_{n}$. 
\smallbreak
 Further, by restricting the invariants $\Gamma_{d,D}(w,u)$ to {\it classical links}, seen as framed links with all framings zero, in \cite{jula4} invariants of classical oriented links $\Delta_{d,D}(w,u)$ are obtained. 

In  \cite{ChLa} it was proved that for generic values of the parameters $u,z$ the invariants $\Delta_{d,D}(w,u)$ do not coincide with the Homflypt polynomial except in the trivial cases $u=1$ and $E=1$. More details are given in Section~\ref{identif}.

\subsection{{\it Invariants from ${\rm YTL}_{d,n}(u)$} }In \cite{gojukola}  the invariants that are defined through the Yokonuma--Temperley--Lieb were studied. More precisely, it was shown that  in order that the trace ${\rm tr}$ passes to the quotient algebra ${\rm YTL}_{d,n}(u)$ it is necessary that the $x_i$'s are $d^{th}$ roots of unity. These furnish a (trivial) solution of the ${\rm E}$--system and in this case $E=1$. By \cite[Remark~5]{jula} the framed link invariants $\mathcal{V}_D(u)$ that are derived from ${\rm YTL}_{d,n}(u)$ are not very interesting, since basic pairs of framed links are not distinguished. On the other hand, by \cite{ChLa}, the classical link invariants $V_D(u)$ that we obtain from ${\rm YTL}_{d,n}(u)$ coincide with the Jones polynomial. This is the main reason that the algebra ${\rm YTL}_{d,n}(u)$ does not qualify for being the framization of the Temperley--Lieb algebra.

\subsection{{\it Invariants from ${\rm FTL}_{d,n}(u)$}}
As it has already been stated, the trace parameters $x_i$ should be solutions of the ${\rm E}$--system so that a link invariant through ${\rm tr}$ is well-defined. Recall that, the conditions of Theorem~\ref{akthmgen} include these solutions for the $x_i$'s. So, in order to define link invariants on the level of the quotient algebra ${\rm FTL}_{d,n}(u)$, we discard any values of the $x_i$'s that do not furnish a solution of the ${\rm E}$--system. For a solution of the ${\rm E}$--system parametrized by $D \subset \mathbb{Z}/d\mathbb{Z}$, using Corollary~\ref{coroles}, we have the following:
\begin{proposition}\label{specialftl}
The specialized trace ${\rm tr}_D$ passes to the quotient algebra ${\rm FTL}_{d,n}(u)$ if and only if:
\[
z= - \frac{1}{(u+1) |D|} \quad \mbox{or} \quad z=-\frac{1}{|D|} .
\]
\end{proposition}

We do not take into consideration the case where $z=-\frac{1}{|D|}$, since important topological information is lost. For example, the trace ${\rm tr}_D$ gives the same value for all even (resp. odd) powers of the $g_i$'s, for $m \in \mathbb{Z}^{>0}$ \cite{jula}:
\begin{equation}\label{evenpower}
{\rm tr}_D(g_i^m) = \left( \frac{u^m -1}{u+1} \right )z + \left( \frac{u^m -1}{u+1} \right )\frac{1}{|D|} +1 \qquad \text{if } m \text{ is even}
\end{equation} 
and
\begin{equation}\label{oddpower}
{\rm tr}_D(g_i^m) = \left( \frac{u^m -1}{u+1} \right ) z + \left( \frac{u^m -1}{u+1} \right ) \frac{1}{|D|} - \frac{1}{|D|} \qquad \text{if } m \text{ is odd,}
\end{equation}
so the corresponding knots and links are not distinguished.

From the remaining case, where the $x_i$'s are solutions of the ${\rm E}$--system and $z=-\frac{1}{(u+1)|D|}$, we deduce for the rescaling factor $w$ that appears in Eq.~\ref{gammainv} that $w=u$. We then have the following definition:
\begin{definition}\label{framinvdef}\rm
Let $X_D$ be a solution of the ${\rm E}$--system, parametrized by the non-empty subset $D$ of $\mathbb{Z}/d\mathbb{Z}$ and let $z = -\frac{1}{(u+1)|D|}$. We obtain from $\Gamma_{d,D}(w,u)$ the following 1-variable framed link invariant:
\[
\begin{array}{crcl}
 \Gamma_{d,D}(u, u)(\widehat{\alpha})&:=& \left (- \frac{(1+ u)|D|}{\sqrt{u} } \right)^{n-1} \left(\sqrt{u}\right)^{\varepsilon(\alpha)} {\rm tr}_D\left(\gamma(\alpha)\right) \label{frjon},
\end{array}
\]
for any $\alpha \in \cup_{\infty}\mathcal{F}_n$. Further, in analogy to  the invariants of $\Gamma_{d,D}(w,u)$, if we restrict to framed links with all framings zero, we obtain from  $\Gamma_{d,D}(u,u)$ an 1-variable invariant of classical links $\Delta_{d,D}(u,u)$.
\end{definition}

 \subsection{{\it Invariants from ${\rm CTL}_{d,n}(u)$}}
 The conditions of Theorem~\ref{ctlthm} do not involve the solutions of the ${\rm E}$--system at all, so in order to obtain a well-defined link invariant on the level of ${\rm CTL}_{d,n}(u)$ we must impose ${\rm E}$--condition on the $x_i$'s.
 Recall that the solutions of the ${\rm E}$--system can be expressed in the form:
\[
x_{_D} = \frac{1}{|D|}\sum_{k \in D} \mathbf{i}_k \in \mathbb{C}C_d ,
\]

\noindent where $\mathbf{i}_k =  \sum_{j=0}^{d-1}\chi_k( t^j)t^j$, $\chi_k$ is the character that
sends $t^m\mapsto \cos\frac{2\pi k m}{d} + i \, \sin\frac{2 \pi k m }{d}$ and $D$ is the subset of $\mathbb{Z}/d\mathbb{Z}$ that parametrizes this solution of the ${\rm E}$--system. Let now  ${\rm \varepsilon}$ be the augmentation function of the group algebra $\mathbb{C}C_d$, sending $\sum_{j=0}^{d-1} x_j t^j$ to $\sum_{j=0}^{d-1} x_j$. We have that:
\begin{equation}\label{ctlvalx}
 {\rm \varepsilon}(x_{_D}) = \frac{1}{|D|} \sum_{k \in D} \varepsilon (\mathbf{i}_k) = \frac{1}{|D|}\sum_{j=0}^{d-1} \sum_{k \in D}\chi_k(t^j) =   \left \{ \begin{array}{ccr} \frac{d}{|D|}, & \mbox{if}& 0\in D \\ 0, & \mbox{if}& 0 \notin D \end{array} \right.  .
 \end{equation}
 
From this we deduce that:
\begin{equation}\label{ctlvale}
\sum_{j =0}^{d-1}E^{(j)}  = {\rm \varepsilon}\left (\frac{x*x}{d}\right) = \frac{1}{d|D|^2}\sum_{k \in D} {\rm \varepsilon}(\mathbf{i}_k *\mathbf{i}_k) =  \frac{1}{|D|^2}\sum_{k \in D} {\rm \varepsilon}(\mathbf{i}_k)  =   \left \{ \begin{array}{ccc} \frac{d}{|D|^2}, & \mbox{if}& 0\in D \\ 0, & \mbox{if}& 0 \notin D \end{array} \right.
\end{equation}
and also that:
\small
\begin{equation}\label{ctlvalt}
\sum_{j=0}^{d-1}{\rm tr}(e_1^{(j)}e_2)  = {\rm \varepsilon}\left (\frac{x*x*x}{d^2}\right) = \frac{1}{d^2|D|^3}\sum_{k \in D} {\rm \varepsilon}(\mathbf{i}_k *\mathbf{i}_k*\mathbf{i}_k) =  \frac{1}{|D|^3}\sum_{k \in D} {\rm \varepsilon}(\mathbf{i}_k)  =   \left \{ \begin{array}{ccc} \frac{d}{|D|^3}, & \mbox{if}& 0\in D \\ 0, & \mbox{if}& 0 \notin D \end{array} \right. .
\end{equation}
\normalsize
\\
 Using now Eqs.~\ref{ctlvalx} -- \ref{ctlvalt}, we have that Eq.~\ref{ctlbasic},for the case where $0 \in D$,  becomes : 
 \begin{equation}\label{specialctl}
  \frac{d}{|D|}\left ((u+1) z^2 + \frac{(u+2)}{|D|}z + \frac{1}{|D|^2} \right )=0 .
 \end{equation}
Notice also that for the case where $0 \notin D$, Eq.~\ref{ctlbasic} vanishes. We thus have the following:
\begin{proposition}\label{ctltwoinvs}
Assume that the $x_i$'s are restricted to solutions of the ${\rm E}$--system. Then, the specialized trace ${\rm tr}_D$ passes to the quotient algebra ${\rm CTL}_{d,n}(u)$ if and only if one of the following cases hold:
\begin{enumerate}[(i)]
\item When $0 \in D$, the trace parameter $z$ takes the values:
\[
z= - \frac{1}{(u+1) |D|} \quad \mbox{or} \quad z=-\frac{1}{|D|}.
\]

\item When $0 \notin D$, the trace parameter $z$ is free.
\end{enumerate} 
 \end{proposition}

We will discuss now the invariants that are derived from the algebras ${\rm CTL}_{d,n}(u)$. \\

\noindent {\it Case (i)} $0\in D$. In this case the values for $z$ in case $(i)$ of Proposition~\ref{ctltwoinvs} coincide with the values for $z$ in Proposition~\ref{specialftl}. Further, much like the case of ${\rm FTL}_{d,n}(u)$, the value $z=-\frac{1}{|D|}$ is not taken into consideration. Thus, the invariants that are obtained from ${\rm tr}_D$ on the level of the quotient algebra ${\rm CTL}_{d,n}(u)$, for subsets $D$ containing zero, coincide with the corresponding invariants $\Gamma_{d,D}(u,u)$ and $\Delta_{d,D}(u,u)$ derived from ${\rm FTL}_{d,n}(u)$, since the conditions that are applied to the trace parameters are the same for both quotient algebras.
\smallbreak
\noindent {\it Case (ii)} $0 \notin D$. In this case $z$ remains an indeterminate. Thus, the only condition that is required so that the trace ${\rm tr}_D$ passes to the quotient algebra, is that the $x_i$'s comprise a solution of the $\rm E$--system. This means that the invariants that are derived from the quotient algebra ${\rm CTL}_{d,n}(u)$, for subsets $D$ not containing zero, coincide with the corresponding invariants $\Gamma_{d,D}(w,u)$ and $\Delta_{d,D}(w,u)$ that are derived from ${\rm Y}_{d,n}(u)$. We have thus proved the following:

\begin{proposition}\label{ctlftly}
 Let $X_D$ be a solution of the ${\rm E}$--system parametrized by $D \subset \mathbb{Z}/d\mathbb{Z}$.  The invariants derived from the algebra ${\rm CTL}_{d,n}(u)$:
\begin{enumerate}[(i)]
\item if $0 \in D$, they coincide with the invariants derived from the algebra ${\rm FTL}_{d,n}(u)$ and
\item  if $0 \notin D$, they coincide with the invariants derived from the algebra ${\rm Y}_{d,n}(u)$.
\end{enumerate}
\end{proposition}

\begin{remark} \rm
As we see from the above, the type of invariants we obtain from the algebra ${\rm CTL}_{d,n}(u)$ depends on whether zero belongs or not to the parametrizing set $D$ of  the specific solution of the ${\rm E}$--system. The intrinsic reason for this peculiar condition on the set $D$ is the fact that when summing up all $n$-roots of unity we get zero, unless $n=1$, see Eq.~\ref{ctlvalx}. 
\end{remark}

\begin{remark} \rm
The results of Propositions~\ref{ctltwoinvs} and~\ref{ctlftly} seem to be in accordance with the recent results of Chlouveraki and Pouchin \cite{ChPou2}, where they prove that the algebra is isomorphic to a direct sum of matrix algebras over tensor products of Temperley--Lieb and Iwahori--Hecke algebras.
\end{remark}

To summarize, the solutions of the ${\rm E}$--system (which are the necessary and sufficient conditions so that topological invariants for framed links can be defined) are included in the conditions of Theorem~\ref{akthmgen}, while for the case of ${\rm CTL}_{d,n}(u)$ we still have to impose them. Even by doing so, this algebra does not deliver any new invariants for (framed) links. This is the main reason that led us to consider the quotient algebra ${\rm FTL}_{d,n}(u)$ as the most natural non-trivial analogue of the Temperley--Lieb algebra in the context of framed links.
\smallbreak
We conclude this section with presenting the following tables that give an overview of the invariants for each quotient algebra:

\begin{center}

\begin{table}[H]
\centering
\begin{tabular}{ c|c |c |c |c | c }
\multirow{2}{*}{$d, |D|>1$}     & \multirow{2}{*}{${\rm Y}_{d,n}(u)$}        & \multicolumn{2}{c|}{${\rm CTL}_{d,n}(u)$}         & \multirow{2}{*}{${\rm FTL}_{d,n}(u)$}    & \multirow{2}{*}{${\rm YTL}_{d,n}(u)$}  \\ \cline{3-4}
                      &                           & \footnotesize{$0\notin D$  }   & \footnotesize{$0\in D$}                       &                         &                       \\ \hline
$\mathcal{F}_{d,n}$                     & $\Gamma_{d,D}(w,u)$                     & $\Gamma_{d,D}(w,u)$ & $\Gamma_{d,D}(u,u)$                   & $\Gamma_{d,D}(u,u)$                   & $-$                     \\ 
\multicolumn{1}{  c |}{$B_n$} & \multicolumn{1}{c |}{$\Delta_{d,D}(w,u)$} & $\Delta_{d,D}(w,u)$     & \multicolumn{1}{c | }{$\Delta_{d,D}(u,u)$} & \multicolumn{1}{c |}{$\Delta_{d,D}(u,u)$} & \multicolumn{1}{c }{$-$}\\ 
\end{tabular}
\vspace{.2cm}
\caption{Overview of the invariants for $|D|>1$.}
\end{table}

\end{center}

\begin{center}
\begin{table}[H]
\centering
\begin{tabular}{ c|c |c |c |c | c }
\multirow{2}{*}{$d, |D|=1$}     & \multirow{2}{*}{${\rm Y}_{d,n}(u)$}        & \multicolumn{2}{c|}{${\rm CTL}_{d,n}(u)$}         & \multirow{2}{*}{${\rm FTL}_{d,n}(u)$}    & \multirow{2}{*}{${\rm YTL}_{d,n}(u)$}  \\ \cline{3-4}
                      &                           & \footnotesize{$0\notin D$}     & \footnotesize{$0\in D$}                       &                         &                       \\ \hline
$\mathcal{F}_{d,n}$                     & $\Gamma_{d,D}(w,u)$                     & $\Gamma_{d,D}(w,u)$ & $\mathcal{V}_D(u)$                   & $\mathcal{V}_D(u)$                   & $\mathcal{V}_D(u)$                     \\ 
\multicolumn{1}{  c |}{$B_n$} & \multicolumn{1}{c |}{$P(\lambda, u)$} & $P(\lambda,u)$     & \multicolumn{1}{c | }{$V_D(u)$} & \multicolumn{1}{c |}{$V_D(u)$} & \multicolumn{1}{c }{$V_D(u)$}\\ 
\end{tabular}
\vspace{.2cm}
\caption{Overview of the invariants for $|D|=1$.}
\end{table}
\end{center}

\section{Identifying the invariants from ${\rm FTL}_{d,n}(u)$ on classical knots and links}\label{identif}

It has been a long standing problem how the classical link invariants derived from the Yokonuma--Hecke algebras compare with other known invariants, especially with the Homflypt polynomial. Finally, in a recent development \cite{ChJuKaLa} it is proved that these invariants are topologically equivalent to the Homflypt polynomial on {\it knots} but not on {\it links}.  For proving these results, a different presentation for the algebra ${\rm Y}_{d,n}(u)$ was considered, leading to classical link invariants denoted by $\Theta_d$. As proved in \cite{ChJuKaLa} the invariants $\Theta_d$ do not depend on the sets $D$, so the notation is simplified.

In order to compare the classical link invariants from the Framization of the Temperley--Lieb algebra with the Jones polynomial we will consider in this section a new presentation for this algebra (according to \cite{ChJuKaLa}) and we will adapt our results so far.

\subsection{{\it A different presentation for ${\rm H}_n(u)$ and ${\rm Y}_{d,n}(u)$}}The Iwahori--Hecke algebra is generated by the elements $h_1^\prime ,\ldots, h_{n-1}^\prime$ satifsfying the relations $h_i^\prime h_j^\prime = h_j^\prime h_i^\prime$, for $|i-j|>1$ and $h_i^\prime h_{i+1}^\prime h_i^\prime =h_{i+1}^\prime h_i^\prime h_{i+1}^\prime$, together with the quadratic relations: $ (h_i^\prime)^2 = 1 + (q-q^{-1}) h^\prime_i$. The transformation from the presentation that was given in Section~\ref{prelim} to this one can be achieved by taking $u=q^2$ and $h_i = q h_i^\prime$ \cite{ChJuKaLa}. Consequently, the defining ideal \eqref{idealrel} for the algebra ${\rm TL}_n(q)$ becomes:
\[
1 + q(h^\prime_1 + h^\prime_2) + q^2 (h^\prime_1 h^\prime_2 + h^\prime_2 h^\prime_1) + q^3 h^\prime_1 h^\prime_2 h^\prime_1.
\]
Further, the Ocneanu trace $\tau$ passes to the quotient algebra for the following values of the trace parameter $\zeta^\prime:$ 
\[
 \zeta^\prime = -\frac{q^{-1}}{q^2+1} \quad \mbox{or} \quad \zeta^\prime = - q^{-1} .
\]

On the other hand, the algebra ${\rm Y}_{d,n}(q)$ is generated by the elements $g^\prime_1 , \ldots , g^\prime_{n-1} , t_1 , \ldots , t_n, $ satistfying the relations (\ref{YH1})-(\ref{YH7})  and the quadratic relations \cite{ChJuKaLa}:

\begin{equation}\label{newquad}
(g^\prime_i)^{\,2} = 1+ (q-q^{-1}) e_i g^\prime_i.
\end{equation}

One can obtain this presentation from the one given in Definition~\ref{yhdefpres} by taking $u=q^2$ and 
\begin{equation}\label{switch}
g_i=g^\prime_i+(q-1)e_ig^\prime_i  \quad \mbox{(or, equivalently, }\, g^\prime_i=g_i+(q^{-1}-1)e_i g_i). 
\end{equation}
Further, on the algebra ${\rm Y}_{d,n}(q)$ a unique Markov trace is defined, analogous to ${\rm tr}$, satisfying the same rules \cite{ChJuKaLa}, for which we retain here the same notation. Note also that, the ${\rm E}$--system remains the same for ${\rm Y}_{d,n}(q)$ so we can talk about the specialized trace ${\rm tr}_D$ \cite{ChJuKaLa}. Consequently, in \cite{ChJuKaLa}, invariants for framed links were derived which restrict to invariants of classical links:
\begin{equation}\label{Thetadinv}
\Theta_d(\lambda_D,q) = \left( -\frac{1-\lambda_D}{\sqrt{\lambda_D}(q-q^{-1})E} \right)^{n-1}\sqrt{\lambda_D}^{\varepsilon(\alpha)}{\rm tr}_D(\delta(\alpha)),
\end{equation}
where $\alpha \in \cup_\infty B_n$, $D$ and $\varepsilon(a)$ are as in Definition~\ref{framinvdef} and $\delta$ is the natural epimorphism $\mathbb{C}(q)B_n \rightarrow {\rm Y}_{d,n}(q)$ and $\lambda_D=\frac{{z'} -(q-q^{-1})E}{z'}$ is the re-scaling factor for the trace ${\rm tr}$.

\subsection{{\it A different presentation for ${\rm FTL}_{d,n}(u)$}} Applying now Equations~\ref{newquad} and \ref{switch} to the defining relation~\eqref{rij=} of the Framization of the Temperley--Lieb algebra, we obtain:
\begin{equation}\label{newftl}
 e_1 e_2 \Big( 1 + q(g^\prime_1 + g^\prime_2) + q^2 (g^\prime_1 g^\prime_2 + g^\prime_2 g^\prime_1) + q^3 g^\prime_1 g^\prime_2 g^\prime_1 \Big) = 0.
 \end{equation}
This gives rise to a new presentation, with parameter $q$, for the  Framization of the Temperley--Lieb algebra, as the quotient of ${\rm Y}_{d,n}(q)$ over the ideal that is generated by the relations \eqref{newftl}. We shall denote this isomorphic algebra by ${\rm FTL}_{d,n}(q)$. 

Given this new presentation for ${\rm FTL}_{d,n}(q)$, the necessary and sufficient conditions of Theorem~\ref{akthmgen} such that the trace ${\rm tr}$ on ${\rm Y}_{d,n}(q)$ passes to the quotient become:
\begin{align}\label{newqvalx}
x^\prime_k = -qz^\prime \left(\sum_{m\in {\rm Sup}_1}\chi_{k}(t^{m}) + (q^2+1)\sum_{m\in {\rm Sup}_2}\chi_{k}(t^{m}) \right),
\end{align}
\begin{align}\label{newqvalz}
 z^\prime=-\frac{1}{q\vert {\rm Sup_1}\vert + q(q^2+1)\vert {\rm Sup_2}\vert  }.
\end{align}
Here we used the symbols $x^\prime_k$ and $z^\prime$ for the trace parameters in order to distinguish them from those of ${\rm FTL}_{d,n}(u)$.
If we choose the $x^\prime_k$'s of Eq.~\ref{newqvalx} to be solutions of the ${\rm E}$--system (by letting either ${\rm Sup}_1$ or ${\rm Sup}_2$ to be the empty set), we obtain (respectively) two values for $z^\prime$, the following:
\begin{equation}\label{qvalzz}
z^\prime=-\frac{q^{-1}E}{q^2+1} \quad \mbox{or} \quad z^\prime=-q^{-1} E,
\end{equation}
and  $z$ and $z^\prime$ are related through the equation: $z = q z^\prime$. Using the same arguments as in Section~\ref{knotinv}, the value $z^\prime = -q^{-1} E$ is discarded, since it is of no topological interest. Thus, by specializing in $\Theta_d(\lambda_D,q)$ the trace parameter $z^\prime = -\frac{q^{-1}E}{q^2+1}$ and choosing the $x^\prime_k$'s to be solutions of the {\rm E}--system, we obtain the invariants for classical knots and links:
\begin{equation}\label{thetaq}
\theta_d(q)(\widehat{\alpha}) := \left ( -\frac{1+q^2}{qE} \right)^{n-1} q^{2\varepsilon(\alpha)} {\rm tr}_D (\delta(a)) = \Theta_d(q, q^4)(\widehat{\alpha}),
\end{equation}
where $\alpha \in \cup_\infty B_n$ and $D$, $\lambda_D$, $\varepsilon(a)$ and $\delta$ are as in Eq.~\ref{Thetadinv}
By choosing the values mentioned above for the trace parameters $z^\prime$ and $x^\prime_k$, $0\leq k \leq d-1$, we obtain $\lambda_D=q^4$. 

\subsection{{\it Identification on knots}} For the case of braids in $\cup_{n} B_n$, whose closure is a {\it knot}, the results of \cite{ChJuKaLa} adapt to the following:
\begin{proposition}
The invariants $\theta_d$ are topologically equivalent to the Jones polynomial on knots.
\end{proposition}
\begin{proof}
Let $\alpha \in B_n$ such that its closure $\widehat{\alpha}$ is a knot. By \cite[Theorem~5.17]{ChJuKaLa} we have that:
\begin{equation}\label{thetaeq}
\Theta_d(q)(\widehat{\alpha}) = \Theta_1(q, \lambda_D^{z^\prime/E}) (\widehat{\alpha})= P(q, \lambda_{\rm H}^{z^\prime/E})(\widehat{\alpha}),
\end{equation}
where  $\lambda_D^{z^\prime/E}$ (resp. $\lambda_{\rm H}^{z^\prime/E}$) is the re-scaling factor $\lambda_D$ (resp. $\lambda_{\rm H}$) with the trace parameter $z^\prime$ (resp. $\zeta^\prime$) specialized to $z^\prime/E$.

 Notice now that: $\frac{z^\prime}{E} =-\frac{q^{-1}}{q^2+1}$, which is the value of $\zeta^\prime $ for which the Ocneanu trace $\tau$ passes to the algebra ${\rm TL}_n(q)$.  This implies that: $\lambda_D^{z^\prime/E} = q^4= \lambda_{\rm H}^{z^\prime/E}$.
Thus, Eq.~\ref{thetaeq} becomes:
\[
\theta_d(q)(\widehat{\alpha}) = \theta_1(q, \lambda_D^{z^\prime/E}) (\widehat{\alpha})= P(q, \lambda_D^{z^\prime/E})(\widehat{\alpha})=P(q, q^4)(\widehat{\alpha}) = V(q)(\widehat{\alpha}).
\]
\end{proof}
\subsection{{\it Identification on links}} For the case of {\it classical links}, we work as follows. In \cite{ChJuKaLa}, using data from \cite{ChaLi}, it was observed that, out of 89 pairs of non-isotopic links, which have the same Homflypt polynomial, there are 6 pairs that are distinguished by the invariants $\Theta_d(\lambda_D, q)$. More precisely, the differences of the polynomials for each pair of links were computed and were found to be non-zero. Indeed:

{\small \begin{align*}
&\Theta_d(L11n358\{0,1\})-\Theta_d(L11n418\{0,0\}) = \frac{(E-1) (\lambda_D -1) (q-1)^2 (q+1)^2 \left(q^2-\lambda_D \right)
   \left(\lambda_D  q^2-1\right)}{E \lambda_D^4 q^4},
\\
&\Theta_d(L11a467\{0,1\})-\Theta_d(L11a527\{0,0\}) = \frac{(E-1) (\lambda_D -1) (q-1)^2 (q+1)^2 \left(q^2-\lambda_D \right)
   \left(\lambda_D q^2-1\right)}{E \lambda_D^4 q^4},
\\
&\Theta_d(L11n325\{1,1\})-\Theta_d(L11n424\{0,0\}) = -\frac{(E-1) (\lambda_D -1) (q-1)^2 (q+1)^2 \left(q^2-\lambda_D \right)
   \left(\lambda_D q^2-1\right)}{E \lambda_D ^3 q^4},
\\
&\Theta_d(L10n79\{1,1\})-\Theta_d(L10n95\{1,0\}) = \frac{(E-1) (\lambda_D -1) (q-1)^2 (q+1)^2 \left(\lambda_D +\lambda_D
   q^4+\lambda_D  q^2-q^2\right)}{E \lambda_D^4 q^4},
\\
&\Theta_d(L11a404\{1,1\})-\Theta_d(L11a428\{0,1\}) = \frac{(E-1) (\lambda_D -1) (\lambda_D+1) (q-1)^2 (q+1)^2
   \left(q^4-\lambda_D  q^2+1\right)}{E q^4},
\\  
&\Theta_d(L10n76\{1,1\})-\Theta_d(L11n425\{1,0\}) =  \frac{(E-1) (\lambda_D -1) (\lambda_D+1) (q-1)^2 (q+1)^2}{E \lambda_D^3 q^2}.
\end{align*}}
Note that the factor $(E -1)$, that is common in all six pairs, suggests that the pairs have the same Homflypt polynomial, since for $E=1$ the difference collapses to zero. Further, in \cite{ChJuKaLa} the values of $\Theta_d$ were computed theoretically for one of the six pairs, using a {\it special skein relation} satisfied by $\Theta_d$. Namely, as it is shown in \cite{ChJuKaLa},  the invariants $\Theta_d$ satisfy the Homflypt skein relation, but only for crossings between different components. 
\smallbreak
For  the invariants $\theta_d$, we specialize in the above computations $z^\prime = - \frac{q^{-1}E}{q^2+1}$ (which implies $\lambda_d=q^4$). Clearly, for $E\neq 1$ the six pairs of links above are also distinguished by the invariants $\theta_d$. Moreover, the special skein relation of $\Theta_d$ is also valid for the invariants $\theta_d$, specializing to the following:
\[ 
q^{-2}\, \theta_d\, (L_{+}) - q^2\, \theta_d\, (L_{-}) = (q-q^{-1})\, \theta_d\, (L_{0}),
\]
where the oriented links $L_{+}$, $L_{-}$, $L_{0}$ comprise a Conway triple involving a crossing between different components. From the above, we have thus proved the following:

\begin{theorem}
For $d\neq 1$ and $E\neq 1$, the invariants for classical links $\theta_d(q)$ are not topologically equivalent to the Jones polynomial.
\end{theorem}

\subsection{{\it Concluding notes}}

 The link invariants from the algebras ${\rm FTL}_{d,n}(u)$ still remain under investigation. In this paper the invariants from ${\rm FTL}_{d,n}(q)$ have been compared to the Jones polynomial and have been proved to be topologically non-equivalent. So the related framed link invariants might lead to new 3-manifold invariants analogous to the Witten invariants.  Note that in the case of the algebras ${\rm YTL}_{d,n}(u)$ the Witten invariants only can be recovered, since the related link  invariants recover the Jones polynomial \cite{gojukola}.

\end{document}